\documentclass[11pt,a4paper]{article}
\usepackage{amsmath}
\usepackage{color}
\usepackage[utf8]{inputenc}
\usepackage{tikz}
\usepackage{multirow,cancel}
\usepackage{float}
\usepackage[shortlabels]{enumitem}
\usepackage[export]{adjustbox}
\RequirePackage[colorlinks,citecolor=blue,urlcolor=blue,linkcolor=blue]{hyperref}
\usepackage{hyperref}
 \usepackage{authblk}
\bibliographystyle{plain}

\usepackage[disable,textwidth=4cm, textsize=footnotesize]{todonotes}%


\usepackage{hyperref,aliascnt}
\usepackage{latexsym,euscript}
\usepackage{amsbsy,amscd,amsfonts,amsmath,amsopn,amssymb,amstext,amsthm,amsxtra,enumitem,bbm}

\usepackage{epsf,epsfig,graphics, color}
\usepackage{ float, xargs, pdfsync}
\usepackage{appendix}

\usepackage{fancybox,ifthen,twoopt}
\pagestyle{plain}

\newtheorem{theorem}{Theorem}
\newaliascnt{proposition}{theorem}

\aliascntresetthe{proposition}

\newaliascnt{lemma}{theorem}
\newtheorem{lemma}[lemma]{Lemma}
\aliascntresetthe{lemma}

\theoremstyle{definition}
\newtheorem{example}{Example}

\newtheorem{remark}{Remark}

\newaliascnt{corollary}{theorem}

\aliascntresetthe{corollary}

\newaliascnt{definition}{theorem}
\newtheorem{definition}[definition]{Definition}
\aliascntresetthe{definition}

\newcounter{hypA'}

\newenvironment{hyp}[1]{
\begin{enumerate}[label=(\textbf{\sf #1}-\arabic*),resume=hyp#1]\begin{sf}}
{\end{sf}\end{enumerate}}


%
%

%
%

\overfullrule=4pt
\frenchspacing
\def\rme{\mathrm{e}}
\def\Cset{\mathsf{C}}

\def\Xset{\mathsf{X}}
\def\Yset{\mathsf{Y}}

\def\Xsigma{\mathcal{X}}
\def\Ysigma{\mathcal{Y}}

\def\met{\Delta}
\def\rset{\ensuremath{\mathbb{R}}}
\def\nset{\ensuremath{\mathbb{N}}}
\def\zset{\ensuremath{\mathbb{Z}}}

\def\mcf{\mathcal{F}}

\def\eqsp{\;}
\def\PE{\mathbb{E}}

\def\PP{\mathbb{P}}

\def\rmd{\mathrm{d}}
\newcommandx\LogInt[5][1=\theta,4=,5=Y]{\upsilon_{#4}^{#1}\langle {#5}_{#2:#3} \rangle}
\newcommand{\ball}[2]{\mathsf{B}(#1, #2)}

\newcommand{\as}{\mbox{-a.s.}}

\newcommand{\ie}{i.e.}


\newcommandx{\aslim}[1]{\ensuremath{\stackrel{#1-\text{a.s.}}{\longrightarrow}}}

\newcommandx\sequence[3][2=n,3=\zset]{\ensuremath{\{ #1_{#2} \}_{#2 \in #3}}}

\newcommand{\CPE}[3][]
{\ifthenelse{\equal{#1}{}}%
{\mathbb{E}\left[\left. #2 \, \right| #3 \right]}
{\mathbb{E}_{#1}\left[\left. #2 \, \right| #3 \right]}
}
\newcommand{\CPEu}[4][]
{\ifthenelse{\equal{#1}{}}%
{\mathbb{E}\left[\left. #2 \, \right| #3 \right]}
{\mathbb{E}^{#1}_{#2}\left[\left. #3 \, \right| #4 \right]}
}

\newcommand{\CPEv}[3][]
{\ifthenelse{\equal{#1}{}}%
{\mathbb{E}_\star\left[\left. #2 \, \right| #3 \right]}
{\mathbb{E}_\star_{#1}\left[\left. #2 \, \right| #3 \right]}
}

\newcommand{\CPP}[3][]
{\ifthenelse{\equal{#1}{}}%
{\mathbb{P}\left[\left. #2 \, \right| #3 \right]}
{\mathbb{P}_{#1}\left[\left. #2 \, \right| #3 \right]}
}

\newcommand{\CPPu}[3][]
{\ifthenelse{\equal{#1}{}}%
{\mathbb{P}\left[\left. #2 \, \right| #3 \right]}
{\mathbb{P}^{#1}\left[\left. #2 \, \right| #3 \right]}
}

\newcommandx{\chunk}[3]%
{\ensuremath{#1}_{#2:#3}}

\def\1{\mathbbm{1}}

\newcommandx\proj[2][1=,2=]{
\ifthenelse{\equal{#1}{}}
{\operatorname{X}}
{\operatorname{X}_{#1:#2}}i
}

\newcommand{\eqdef}{:=}

\newcommandx\lkdM[3][1=,3=]{
\ifthenelse{\equal{#2}{}}
{ \mathsf{L}_{#1}^{#3}}
{ \mathsf{L}_{#1}^{#3}\langle #2\rangle}
}
\newcommandx\lkdMStat[3][1=,3=]{
\ifthenelse{\equal{#2}{}}
{ \bar{\mathsf{L}}_{#1}^{#3}}
{ \bar{\mathsf{L}}_{#1}^{#3}\langle #2 \rangle}
}

\newcommandx\lkd[3][1=,3=]{
\ifthenelse{\equal{#2}{}}
{ \ell_{#1}^{#3}}
{ \ell_{#1}^{#3}\langle #2\rangle}
}
\newcommandx\lkdStat[3][1=,3=]{
\ifthenelse{\equal{#2}{}}
{ \bar \ell_{#1}^{#3}}
{ \bar \ell_{#1}^{#3}\langle #2 \rangle}
}

\newcommand{\mlY}[1]{\hat\theta_{#1}}
\newcommand{\argmax}{\mathop{\mathrm{argmax}}}

\newcommandx{\normLip}[2][1=]{\mathrm{Lip}(#2;#1)}
\newcommandx{\wass}[2][1]{\lVert #2\rVert_{#1}}

\newcommandx{\wasser}[3][1=]{\mathcal{W}_{#1}\left(#2,#3\right)}
\newcommandx{\proho}[3][1=]{\mathcal{P}_{#1}\left(#2,#3\right)}

\newcommandx{\dobru}[2][1=]{\dobrush_{#1}\left( #2\right)}
\newcommand{\dobrush}{\Delta}

\newcommandx{\Pcan}[2][1=,2=]{\mathbb{P}_{#1}^{#2}}
\newcommandx{\Ecan}[2][1=,2=]{\mathbb{E}_{#1}^{#2}}

\newcommandx\cesp[4][1=,2=]{\ensuremath{{\mathbb E}_{#1}^{#2}\left[ \left. #3 \right| #4 \right]}}

\newcommandx{\f}[2][1=\theta]{\psi^{#1}\langle #2 \rangle}
\newcommand{\thv}{{\theta_\star}}
\newcommandx{\kap}[3][1=\theta]{
\ifthenelse{\equal{#3}{}}
{\kappa^{#1}\langle #2\rangle }
{\kappa^{#1}\langle #2\rangle \left(#3\right)}
}

\def\lnp{\ln^{+}}

\def\zsetp{\zset_{+}}
\def\zsetn{\zset_{-}}
\def\zsetpnz{\zset_{+}^*}

\def\rsetp{\rset_{+}}

\newcommandx{\probdoeblin}[3][1=]{\mu^{#1}_{#2}\langle #3 \rangle}
\newcommand{\Pblock}[2][]
{\ifthenelse{\equal{#1}{}}{\boldsymbol{\operatorname{L}}\langle#2\rangle}{\boldsymbol{\operatorname{L}}^{#1}\langle#2\rangle}
}

\newcommand{\ConPblock}[3][]
{\ifthenelse{\equal{#1}{}}{\boldsymbol{\operatorname{L}}\langle#2|#3\rangle}{\boldsymbol{\operatorname{L}}^{#1}\langle#2|#3\rangle}
}

\newcommand{\pblock}[2][]
{\ifthenelse{\equal{#1}{}}{\mathbf{\ell}\langle#2\rangle}{\mathbf{\ell}^{#1}\langle #2\rangle}
}

\newcommand{\Sset}{\mathsf{S}}
\newcommand{\Ssigma}{\mathcal{S}}

\def\simplex{\mathsf{P}}

\newcommandx{\limlike}[4][1=\theta, 2=\theta_\star]{p^{#1,#2}\left( #3\mid #4 \right)}

\def\radius{|\lambda|_{\max}}

\def\Xmet{\rmd_\Xset}


\def\Xmet{\rmd}


\begin{document}




\title{
Handy sufficient conditions for the convergence
 of the maximum likelihood estimator in observation-driven models}

\author[1]{Randal Douc\thanks{randal.douc@telecom-sudparis.eu}}
\author[2]{Fran\c{c}ois Roueff\thanks{roueff@telecom-paristech.fr}}
\author[2]{Tepmony Sim\thanks{sim@telecom-paristech.fr}}
\affil[1]{Department  CITI, CNRS UMR 5157, Telecom Sudparis, Evry, France.}
\affil[2]{Institut Mines-Telecom, Telecom Paristech, CNRS LTCI, Paris, France.}

\renewcommand\Authands{ and }

%
%
%
%

\date{April 7, 2015}



\sloppy
\maketitle

\begin{abstract}
  This paper generalizes asymptotic properties obtained in the
  observation-driven times series models considered by \cite{dou:kou:mou:2013}
  in the sense that the conditional law of each observation is also permitted
  to depend on the parameter. The existence of ergodic solutions and the
  consistency of the Maximum Likelihood Estimator (MLE) are derived under
  easy-to-check conditions. The obtained conditions appear to apply for a wide
  class of models. We illustrate our results with specific observation-driven
  times series, including the recently introduced NBIN-GARCH and NM-GARCH
  models, demonstrating the consistency of the MLE for these two models.
\end{abstract}
{\textit{MSC:}} {Primary: 62F12; Secondary: 60J05.}\\
{\textit{Keywords:}} {consistency, ergodicity,  maximum likelihood, observation-driven models, time series of counts.}

\section{Introduction}
Observation-driven time series models have been widely used in various
disciplines such as in economics, finance, epidemiology, population dynamics,
etc. These models have been introduced by \cite{cox:1981} and later considered
by \cite{streett:2000}, \cite{davis:dunsmuir:streett:2003},
\cite{fokianos:rahbek:tjostheim:2009}, \cite{neuman:2011},
\cite{doukhan:fokianos:tjostheim:2012}, \cite{davis:liu:2012} and
\cite{dou:kou:mou:2013}. The celebrated GARCH$(1,1)$ model, see
\cite{bollerslev:1986}, as well as most of the models derived from this one,
see \cite{bollerslev08-glossary} for a list of some of them, are typical
examples of observation-driven models. Observation-driven models have the nice
feature that the associated (conditional) likelihood and its derivatives are
easy to compute and the prediction is straightforward. The consistency of the
maximum likelihood estimator (in short, MLE) for the class of these models can
be cumbersome, except when it can be derived using computations specific to the
studied model (the GARCH(1,1) case being one of the most celebrated
example). When the observed variable is discrete, general consistency results
have been obtained only recently in \cite{davis:liu:2012} or
\cite{dou:kou:mou:2013} (see also in \cite{henderson:matteson:woodard:2011} for
the existence of stationary and ergodic solutions to some observation-driven
time series models). However, the consistency result of \cite{dou:kou:mou:2013}
applies to some restricted class of models and does not cover the case where
the distribution of the observations given the hidden variable also depends on
an unknown parameter.  We now introduce three simple examples, to which the
results of \cite{dou:kou:mou:2013} can not be directly applied.  The first one
is the negative binomial integer-valued GARCH (NBIN-GARCH) model, which was
first introduced by \cite{zhu:2011} as a generalization of the Poisson IN-GARCH
model. The NBIN-GARCH model belongs to the class of integer-valued GARCH models
that account for overdispersion (\ie, variability is larger than mean) and
potential heavy tails in the high values. In \cite{zhu:2011}, the author
applied this model to treat the data of counts of poliomyelitis cases in the
USA from 1970 to 1983 reported by the Centres for Disease Control, where data
overdispersion was detected. The estimation result showed that
NBIN-GARCH$(1,1)$ outperformed among some commonly used models such as Poisson
and Double Poisson models. The NBIN-GARCH$(1,1)$ model is formally defined as
follows.
\begin{example}[NBIN-GARCH$(1,1)$ model]\label{example:Nbigarch:defi}
Consider the following recursion.
\begin{align}\label{eq:def:nbingarch}\begin{split}
&X_{k+1}=\omega+a X_k+bY_{k}\eqsp, \\
&Y_ {k+1}|\chunk{X}{0}{k+1},\chunk{Y}{0}{k}\sim \mathcal{NB} \left(r,\frac{X_{k+1}}{1+X_{k+1}}\right)\eqsp,
\end{split}
\end{align}
where $X_k$ takes values in $\Xset=\rsetp$, $Y_k$ takes values in $\zsetp$ and
$\theta=(\omega, a, b,r)\in(0,\infty)^4$ is an unknown parameter. In
\eqref{eq:def:nbingarch}, $\mathcal{NB}(r, p)$ denotes the negative binomial
distribution with parameters $r>0$ and $p\in (0,1)$, that is: if $Y \sim
\mathcal{NB}(r, p)$, then $\PP(Y=k)=\frac{\Gamma(k+r)}{k!\Gamma(r)} (1-p)^r
p^{k}$ for all $k\geq0$, where $\Gamma$ stands for the Gamma function.  Though
substantial analysis on this model has been carried out in the literature, to
the best of our knowledge, the consistency of the MLE has not been treated, see
the end of the discussions of Section~6 in \cite{zhu:2011}.
\end{example}
The second example is the univariate normal mixture GARCH model (NM-GARCH)
proposed by \cite{haas2004mixed} and later considered by
\cite{alexander2006normal}. The NM-GARCH model is another natural extension of
GARCH processes, where the usual Gaussian conditional distribution of the
observations given the hidden volatility variable is replaced by a mixture of
Gaussian distributions given a hidden vector volatility variable. The NM-GARCH
model has the ability of capturing time variation in both conditional skewness
and kurtosis, while the classical GARCH cannot. In \cite{alexander2006normal},
the NM-GARCH$(1,1)$ model was applied to examine the data of exchange rates
consisting of daily prices in US dollars of three different currencies (British
pound, euro and Japanese yen) from 2 January 1989 to 31 December 2002. The
empirical evidence suggested the best performance of NM$(2)$-GARCH$(1,1)$ when
compared to the classical GARCH$(1,1)$, standardized symmetric and skewed
$t$-GARCH$(1,1)$ models applied to this same data. The definition of this model
is formally stated as follows.
\begin{example}[NM$(d)$-GARCH$(1,1)$ model]
\label{example:nmgarch:defi}
Let $d \in \nset\setminus \{0\}$ and consider the following recursion.
\begin{align}
&\mathbf{X}_{k+1}=\boldsymbol{\omega}+\mathbf{A}\mathbf{X}_k+Y_{k}^2\mathbf{b} \eqsp,\nonumber \\
&Y_ {k+1}|\chunk{\mathbf{X}}{0}{k+1},\chunk{Y}{0}{k}\sim G^\theta(\mathbf{X}_{k+1};\cdot) \eqsp, \label{eq:mmdm:def}\\
&\frac{\rmd G^\theta(\mathbf{x};\cdot)}{\rmd\nu}(y)= \sum_{\ell=1}^{d}{\gamma_\ell\frac{\rme^{-y^2/{2x_{\ell}}}}{(2\pi x_{\ell})^{1/2}}}\eqsp, \quad \mathbf{x}\in(0,\infty)^d,\;y\in\rset\eqsp,  \nonumber
\end{align}
where $\nu$ is the Lebesgue measure on $\rset$, $\mathbf{X}_k=[X_{1,k} \ldots X_{d,k}]^T$ takes values in  $\Xset=\rsetp^d$;
$\boldsymbol{\gamma}=[\gamma_1\ldots\gamma_d]^T$ a $d$-dimensional vector of
mixture coefficients belonging to the $d$-dimensional simplex
\begin{equation}
  \label{eq:simplex-def}
  \simplex_d=\left\{\boldsymbol{\gamma}\in\rsetp^d~:~
  \sum_{\ell=1}^{d}{\gamma_\ell}=1\right\}\;,
\end{equation}
$\boldsymbol{\omega}$, $\mathbf{b}$ are $d$-dimensional vector parameters with
positive and non-negative entries, respectively and $\mathbf{A}$ is a $d\times
d$ matrix parameter with non-negative entries. Here we have
$\theta=\left(\boldsymbol{\gamma}, \boldsymbol{\omega}, \mathbf{A},
  \mathbf{b}\right)$. Note that $G^\theta$ depends on $\theta$ only through the
mixture coefficients $\gamma_1,\dots,\gamma_d$. If $d=1$, we obtain the usual
conditionally Gaussian GARCH(1,1) process. In such a case, since
$\boldsymbol{\gamma}=\gamma_1=1$, $G^\theta$ no longer depends on $\theta$.  Up
to our knowledge, the usual consistency proof of the MLE for the GARCH cannot
be directly adapted to this model.
\end{example}
Finally, we consider the following new example, where a threshold is added to the
usual INGARCH model in the conditional distribution.
\begin{example}[Threshold INGARCH model]
\label{example:tingarch}
Consider the following recursion.
\begin{align}\label{eq:def:tingarch}\begin{split}
&X_{k+1}=\omega+a X_k+bY_{k}\eqsp, \\
&Y_ {k+1}|\chunk{X}{0}{k+1},\chunk{Y}{0}{k}\sim \mathcal{P} \left(X_{k+1}\wedge \tau\right)\eqsp,
\end{split}
\end{align}
where $X_k$ takes values in $\Xset=(0, \infty)$, $Y_k$ takes values in $\zsetp$
and $\theta=(\omega, a, b, \tau)\in(0,\infty)^4$ is an unknown
parameter. Comparing with the usual INGARCH model, a threshold $\tau$ has been
added in the conditional observation distribution. This corresponds to the
practical case where the hidden variable has an influence on the observation up
to this threshold.
\end{example}

For a well-specified model, a classical approach to establish the consistency
of the MLE generally involves two main steps: first the maximum likelihood
estimator (MLE) converges to the maximizing set $\Theta_\star$ of a limit
criterion, and second the maximizing set indeed reduces to the true parameter
$\thv$, which is usually referred to as solving the \emph{identifiability}
problem. In this paper, we are interested in solving the problem involved in
the first step, that is, the convergence of MLE.  We extend the convergence
result of MLE obtained in \cite{dou:kou:mou:2013}, which is valid for a
restricted class of models, to a larger class of models in which the three
examples introduced above are embedded. More precisely, we show the convergence
of MLE in observation-driven models where the probability distributions of
observations explicitly depend on the unknown parameters. Moreover, we provide
very simple conditions that are easy to check, as shown by the three
illustrating examples.

The paper is organized as follows. Specific definitions and notation are
introduced in \autoref{sec:definitions-notation-od}. Then,
\autoref{sec:main-result} contains the main contribution of the paper, that is,
sufficient conditions for the existence of ergodic solutions and for the
consistency of the MLE. These results are then applied in
\autoref{sec:examples} to the three examples introduced above.
Numerical experiments for the NBIN-GARCH$(1,1)$ model are
  given in \autoref{sec:numer-exper}. Finally, \autoref{sec:proofs} provides
the proofs of the main results, mainly inspired
from~\cite{dou:kou:mou:2013}.

\section{Definitions and notation}
\label{sec:definitions-notation-od}
Consider a bivariate stochastic process $\{(X_k,Y_k)\,:\,k\in\zsetp\}$ on
$\Xset\times\Yset$, where $(\Xset,\Xmet)$ is a complete and separable metric space
endowed with the associated Borel $\sigma$-field $\Xsigma$ and
$(\Yset,\Ysigma)$ is a Borel space. Let $(\Theta,\met)$, the set of parameters,
be a compact metric space, $\{G^\theta\,:\,\theta\in\Theta\}$ be a family of
probability kernels on $\Xset \times \Ysigma$ and $\{(x,y)
\mapsto \psi^\theta_y(x)\,:\, \theta \in \Theta\}$ be a family of measurable
functions from $(\Xset\times \Yset, \Xsigma \otimes \Ysigma)$ to $(\Xset,
\Xsigma)$. The observation-driven time series model can be formally defined as
follows.

\begin{definition}\label{def:obs-driv:gen}
  A time series $\{Y_k \,:\,k\in\zsetp\}$ valued in $\Yset$ is said to be
  distributed according to an \emph{observation-driven model} with parameter
  $\theta\in\Theta$ if there is a bivariate Markov chain
  $\{(X_k,Y_k)\,:\,k\in\zsetp\}$ on $\Xset\times\Yset$ whose transition kernel
  $K^\theta$ satisfies
\begin{equation}\label{eq:def:observation-driven}
K^\theta((x,y);\rmd x'\rmd y') =  \delta_{\psi^\theta_{y}(x)}(\rmd x') \;
G^\theta (x';\rmd y') \eqsp,
\end{equation}
where $\delta_a$ denotes the Dirac mass at point $a$. Moreover, we will say
that the observation-driven time series model is dominated by  some $\sigma$-finite measure $\nu$ on
$(\Yset,\Ysigma)$ if for all $x \in \Xset$, the probability kernel $G^\theta (x;\cdot)$ is dominated by
$\nu$. In this case we denote by $g^\theta (x;\cdot)$ its Radon-Nikodym
derivative,
$g^\theta (x;y)=\frac{\rmd G^\theta (x;\cdot)}{\rmd
  \nu}(y)$, and we always assume that for all $(x,y) \in \Xset \times \Yset$ and for all $\theta\in\Theta$,
\begin{equation*}
g^\theta(x;y)>0\;.
\end{equation*}
\end{definition}
A dominated parametric observation-driven model is thus characterized by the
collection $\{(g^\theta,\psi^\theta)\,:\,\theta\in\Theta\}$.  The class of
observation-driven time series models is a particular case of
\emph{partially-observed Markov chains} since only $Y_k$'s are observed,
whereas $X_k$'s are \emph{hidden} variables.  Note that our notation for
observation-driven models is slightly different from that
of~\cite{dou:kou:mou:2013} where their sequence $\{Y_k\}$ corresponds to our
sequence $\{Y_{k-1}\}$.  Note also that the process $\{X_k\,:\,k\geq1\}$ by
itself is a Markov chain with transition kernel defined by
\begin{equation}
  \label{eq:Rtheta-def}
R^\theta(x;A )= \int \1_A(\psi^\theta_{y}(x))\; G^\theta(x;\rmd y),\quad x\in\Xset,\;A\in\Xsigma  \; .
\end{equation}
However, observation-driven time series models do not belong to the class of hidden
Markov models. This can be seen in the following recursive relation, which
holds for all $k\geq0$,
\begin{align}
\begin{split}
& X_{k+1}=\psi^\theta_{Y_{k}}(X_k) \eqsp, \label{eq:def:XY:theta:cl} \\
& Y_{k+1}\mid\mathcal{F}_k\sim G^\theta (X_{k+1};\cdot) \eqsp,
\end{split}
\end{align}
where $\mathcal{F}_k=\sigma\left(X_\ell, X_{\ell+1},Y_\ell\,:\,\ell\le k,
  \ell\in\zsetp\right)$ and which can be represented graphically as below.
\begin{figure}[h]
\begin{center}
 \begin{tikzpicture}[node  distance=1.5cm]
\node(a){${X_k}$};
\node(b)[right of =a]{${X_{k+1}}$};
\node(c)[below of =b]{${Y_{k+1}}$};
\node(d)[left of =c]{${Y_{k}}$};
\node(e)[right of =b]{${X_{k+2}}$};
\node(f)[left of =a]{};
\node(h)[right of =e]{};
\node(g)[right of =c]{${Y_{k+2}}$};

\draw[->, thick, color=blue!50!black](a)--(b)node[pos=0.7, below]{$\psi^\theta$};
\draw[->, thick](b)--(c)node[pos=0.4, right]{\textbf{$G^\theta$}};
\draw[->, thick, color=blue!50!black](d)--(b);
\draw[->, dashed, color=blue!50!black](c)--(e);
\draw[->, dashed](e)--(g);

\draw[->, dashed, color=blue!50!black](b)--(e);
\draw[->, dashed](a)--(d);
\draw[->, dashed,color=blue!50!black](f)--(a);
\draw[->, dashed, color=blue!50!black](e)--(h);

\end{tikzpicture}
\caption{Graphical representation of the observation-driven model.}\label{fig:or-arrow-graph}
\end{center}
\end{figure}
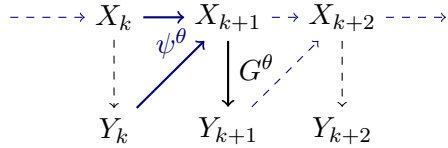

The most popular example is the GARCH(1,1) process, where
$G^\theta(x;\cdot)$ is a centered (say Gaussian) distribution with variance $x$ and
$\psi^\theta_y(x)$ is an affine function of $x$ and $y^2$. One can readily
check that Examples~\ref{example:Nbigarch:defi} and~\ref{example:nmgarch:defi}
are other instances of dominated observation-driven models.

The inference about model parameter is carried out by relying on the conditional
likelihood of the observations $(Y_1,\ldots,Y_n)$ given $X_1=x$ for an arbitrary
$x\in\Xset$. The corresponding conditional density function with respect to $\nu^{\otimes
  n}$ is, under parameter $\theta$, for all $x\in\Xset$,
  
\begin{equation} \label{eq:lkd:Y:cl:X1=x}
\chunk{y}{1}{n} \mapsto \prod_{k=1}^{n} g^\theta\left(\f{\chunk{y}{1}{k-1}}(x);y_{k}\right) \eqsp,
\end{equation}
where, for any vector $\chunk{y}{1}{p}=(y_1,\dots,y_p)\in\Yset^p$,
$\f{\chunk{y}{1}{p}}$ is the $\Xset\to\Xset$ function obtained as the successive
composition of  $\psi^\theta_{y_1}$,  $\psi^\theta_{y_2}$, ..., and $\psi^\theta_{y_p}$,
\begin{equation}
\label{eq:notationItere:f:cl}
\f{\chunk{y}{1}{p}}=\psi^\theta_{y_p} \circ \psi^\theta_{y_{p-1}} \circ \dots \circ \psi^\theta_{y_1}\,,
\end{equation}
with the convention $\f{\chunk{y}{s}{t}}(x)=x$ for $s>t$. Then, the corresponding
(conditional) Maximum Likelihood Estimator (MLE) $\mlY{x,n}$ of the parameter $\theta$,
 is defined by
\begin{equation}
\label{eq:defi:mle}
\mlY{x,n} \in \argmax_{\theta \in \Theta} \lkdM[x,n]{\chunk{Y}{1}{n}}[\theta]\eqsp,
\end{equation}
where
\begin{equation}
\label{eq:defi:lkdM}
\lkdM[x,n]{\chunk{y}{1}{n}}[\theta]\eqdef   n^{-1} \sum_{k=1}^{n} \ln g^\theta\left(\f{\chunk{y}{1}{k-1}} (x);y_k\right) \eqsp.
\end{equation}
In this contribution, we study the convergence of $\mlY{x,n}$ as $n\to\infty$
for some well-chosen value of $x$ under the assumption that the model is well specified and
the observations are in a steady state. This means that we assume that the
observations $\{Y_k \,:\,k\in\zsetp\}$ are distributed according to
$\tilde\PP^{\thv}$ with $\thv\in\Theta$, where, for all $\theta\in\Theta$,
$\tilde\PP^{\theta}$ denotes the stationary distribution of the observation-driven time series corresponding to the parameter $\theta$. However whether
such a distribution is well defined is not always obvious. We will use the
following ergodicity assumption.

\begin{hyp}{A}
\item\label{assum:gen:identif:unique:pi} For all $\theta\in\Theta$, the
  transition kernel $K^\theta$ of the complete chain admits a unique stationary
  distribution $\pi^\theta$ on $\Xset\times\Yset$.
\end{hyp}
With this assumption, we can now define $\tilde\PP^{\theta}$. The following
notation and
definitions will be used throughout the paper.
\begin{definition}\label{def:equi:theta}
For any probability distribution $\mu$ on
$\Xset\times\Yset$, we denote by $\PP_\mu^\theta$ the
distribution of the Markov chain $\{(X_k,Y_k),\,k\geq0\}$ with kernel
$K^\theta$ and initial probability mesure $\mu$.
Under Assumption~\ref{assum:gen:identif:unique:pi}, we denote by $\pi_1^\theta$
and $\pi_2^\theta$
the marginal distributions of $\pi^\theta$ on $\Xset$ and $\Yset$, respectively
and by $\PP^\theta$ and
$\tilde{\mathbb{P}}^\theta$ the probability distributions defined respectively as follows.
\begin{enumerate}[label=\alph*)]
\item $\PP^\theta$ denotes the extension of $\mathbb{P}_{\pi^\theta}^\theta$ on the whole line
$(\Xset\times\Yset)^{\zset}$.
\item $\tilde{\mathbb{P}}^\theta$ is the corresponding projection on the component $\Yset^{\zset}$.
\end{enumerate}
\end{definition}
The probability
distributions $\PP^\theta$ and $\tilde{\mathbb{P}}^\theta$ are more formally
defined by setting, for all $m\in\zset$ and
$B\in\Ysigma^{\otimes(m+\zsetpnz)}$,

\begin{equation}
  \label{eq:tildePtheta-def}
  \tilde{\mathbb{P}}^\theta\left(\Yset^{m+\zsetn}\times B\right)
=\mathbb{P}^\theta\left(\Xset^{\zset}\times \left(\Yset^{m+\zsetn}\times B\right)\right)=\mathbb{P}_{\pi^\theta}^\theta\left(\Xset^{m+\zsetpnz}\times B\right) \;,
\end{equation}
or equivalently, using the canonical functions $Y_k$, $k\in\zset$,
\begin{equation}   \label{eq:tildePtheta-def:2}
\tilde{\mathbb{P}}^\theta\left(\chunk{Y}{m+1}{\infty}\in B\right)
=\mathbb{P}^\theta\left(\chunk{Y}{m+1}{\infty}\in B\right)
=\mathbb{P}_{\pi^\theta}^\theta\left(\chunk{Y}{m+1}{\infty}\in B \right) \;.
\end{equation}
Here and in what follows, we abusively use the same notation $Y_k$
both for the canonical projection defined on $\Yset^{\zset}$ and for the
one defined on $(\Xset\times\Yset)^{\zsetp}$. We also use the symbols
$\mathbb{E}^\theta$ and $\tilde{\mathbb{E}}^\theta$ to denote the expectations
corresponding to $\PP^\theta$ and $\tilde{\mathbb{P}}^\theta$, respectively.

\section{Main results}\label{sec:main-result}

\subsection{Preliminaries}
In this section, we follow the same lines as in \cite{dou:kou:mou:2013} to
derive the convergence of the MLE $\mlY{x,n}$ for a general class of
observation-driven models. The approach is to establish that, as the number of
observations $n\to\infty$, there exists a
$(\Yset^\zset,\Ysigma^{\otimes\zset})\to(\rset,\mathcal{B}(\rset))$ measurable
function $p^{\theta}(\cdot|\cdot)$ such that the
normalized log-likelihood $\lkdM[x,n]{\chunk{Y}{1}{n}}[\theta]$ defined in \eqref{eq:defi:lkdM}, for some
appropriate value of $x$, can be approximated by
$$
n^{-1} \sum_{k=1}^n \ln p^{\theta}(Y_k|Y_{-\infty:k-1}) \;.
$$
To define $p^{\theta}(\cdot|\cdot)$, we set, for all
$\chunk{y}{-\infty}{1}\in\Yset^{\zsetn}$, whenever the following limit is well
defined,
  \begin{equation}
    \label{eq:def-p-theta-neq-thv}
p^{\theta}\left(y_1\,|\,\chunk{y}{-\infty}{0}\right) =
\begin{cases}
\displaystyle\lim_{m\to\infty}  g^\theta\left(\f[\theta]{\chunk{y}{-m}{0}}(x);y_1\right)&
\text{ if the limit exists,}\\
\infty&\text{ otherwise.}
\end{cases}
\end{equation}
By~\ref{assum:gen:identif:unique:pi}, the process $Y$ is ergodic under
$\tilde\PP^{\thv}$ and provided that
$$
\tilde\PE^\thv\left[\lnp p^{\theta}(Y_1|Y_{-\infty:0})\right]<\infty\eqsp,
$$
 it follows that
$$
\lim_{n\to\infty}\lkdM[x,n]{\chunk{Y}{1}{n}}[\theta]=\tilde\PE^\thv\left[\ln p^{\theta}(Y_1|Y_{-\infty:0})\right]\eqsp,\quad\tilde\PP^\thv\as
$$
In this paper we show that with probability tending to one, the MLE $\mlY{x,n}$
eventually lies in a neighborhood of the set
\begin{equation}\label{eq:def-Theta-star-set}
\Theta_\star=\argmax_{\theta\in\Theta}\tilde{\PE}^{\thv}\left[\ln
  p^{\theta}(Y_1|\chunk{Y}{-\infty}{0})\right] \; ,
\end{equation}
which only depends on $\thv$.  In this contribution, we provide easy-to-check sufficient
conditions implying
\begin{equation}\label{eq:strong-consistency-prelim}
\lim_{n\to\infty}\met(\mlY{x,n},\Theta_\star)=0,\quad \tilde\PP^{\theta_\star}\as,
\end{equation}
but, for the sake of brevity, we do not precisely determine the set
$\Theta_\star$. Many approaches have been proposed to investigate this problem,
which is often referred to as the \emph{identifiability} problem.  In
particular cases, one can prove that $\Theta_\star=\{\thv\}$, in which case the
strong consistency of the MLE follows
from~(\ref{eq:strong-consistency-prelim}). We will mention a general result
which precises how the set $\Theta_\star$ is related to the true parameter $\thv$ in~\autoref{rem:identifiability}. For the
moment, let us mention that we have
\begin{equation}\label{eq:basic-thv-Thetastar}
  \thv\in\Theta_\star\;,
\end{equation}
provided that the following assumption holds:
\begin{hyp}{B}
\item\label{assum:Ptheta:thetastar:is:density}
 For all $\theta,\thv\in\Theta$, we have
  \begin{enumerate}[label=(\roman*)]
  \item\label{item:PthetaNEQthetastar:is:density} If $\theta\neq\thv$,
    $y\mapsto p^{\theta}(y|\chunk{Y}{-\infty}{0})$ is a density function  $\tilde\PP^{\theta_\star}\as$
  \item\label{item:PthetaISthetastar:is:density}  Under
    $\tilde\PP^{\theta_\star}$, the function $y\mapsto
    p^{\thv}(y|\chunk{Y}{-\infty}{0})$ is the conditional density function
    of $Y_1$ given $\chunk{Y}{-\infty}{0}$.
  \end{enumerate}
\end{hyp}
Indeed, \eqref{eq:basic-thv-Thetastar} follows by  writing for all $\theta \in \Theta$,
\begin{align*}
&\tilde{\PE}^{\thv}\left[\ln
  p^{\thv}(Y_1|\chunk{Y}{-\infty}{0})-\ln
  p^{\theta}(Y_1|\chunk{Y}{-\infty}{0})\right]
  = \tilde{\PE}^{\thv}\left[\ln
  \frac{p^{\thv}(Y_1|\chunk{Y}{-\infty}{0})}{p^{\theta}(Y_1|\chunk{Y}{-\infty}{0})}\right] \\
& \quad
  = \tilde{\PE}^{\thv}\left[\tilde{\PE}^{\thv}\left[\left. \ln
  \frac{p^{\thv}(Y_1|\chunk{Y}{-\infty}{0})}
  {p^{\theta}(Y_1|\chunk{Y}{-\infty}{0})}\right| \chunk{Y}{-\infty}{0} \right] \right]\eqsp,
\end{align*}
which is nonnegative under \ref{assum:Ptheta:thetastar:is:density} since it is the expectation of a conditional Kullback-Leibler divergence.

\subsection{Convergence of the MLE}

In this part, we always assume that \ref{assum:gen:identif:unique:pi}
holds.
 The following is a list of
additional assumptions on which our convergence result relies.
 \begin{hyp}{A}
 \item\label{ass:21-lyapunov} There exists a
   function $\bar V:\Xset\to\rsetp$ such that, for all $\theta\in\Theta$, $\pi_1^\theta(
   \bar V)<\infty$.
 \end{hyp}
\begin{remark}
  Assumption~\ref{ass:21-lyapunov} is usually obtained as a byproduct of the
  proof of Assumption~\ref{assum:gen:identif:unique:pi}, see
  ~\autoref{sec:ergodicity}. It is here stated as an assumption for
  convenience.
\end{remark}
The following set of conditions can readily be checked on  $g^\theta$ and
$\psi^\theta$.
\begin{hyp}{B}
\item\label{assum:continuity:Y:g-theta}
For all $y \in\Yset$, the function $(\theta,x) \mapsto g^\theta(x;y)$ is
continuous on $\Theta\times\Xset$.
\item\label{assum:continuity:Y:phi-theta}
For all $y\in  \Yset$, the function $(\theta,x) \mapsto \psi^\theta_y(x)$ is
continuous  on $\Theta\times\Xset$.
\end{hyp}
The function $\bar V$ appearing
in~\ref{assum:practical-cond:O}\ref{item:22or23-barphi-V} below is the same one
as in Assumption~\ref{ass:21-lyapunov}. Moreover, in this condition and
throughout the paper we write  $f\lesssim V$ for a real-valued function $f$ and a nonnegative function $V$
defined on the same space $\Xset$, whenever there exists a positive constant
$c$ such that $|f(x)|\leq c V(x)$ for all $x\in\Xset$.
\begin{hyp}{B}
\item \label{assum:practical-cond:O}
There exist $x_1\in\Xset$, a closed set $\Xset_1\subseteq\Xset$,
  $\varrho\in(0,1)$, $C\geq0$ and
  measurable functions $\bar\psi:\Xset_1\to\rsetp$, $H:\rsetp\to\rsetp$ and
  $\bar\phi:\Yset\to\rsetp$ such that the following assertions hold.
\begin{enumerate}[label=(\roman*)]
\item \label{assum:exist:practical-cond:subX}
For all
  $\theta\in\Theta$ and $(x, y)\in\Xset \times \Yset$, $\psi^\theta_y(x)\in\Xset_1$.
\item\label{assum:momentCond} $\displaystyle\sup_{(\theta,x,y)\in\Theta\times\Xset_1\times\Yset}g^\theta(x;y)<\infty$.
\item  \label{assum:exist:practical-cond:O}
For all
  $\theta\in\Theta$, $n\in\zsetp$, $x\in\Xset$, and $\chunk{y}{1}{n}\in\Yset^n$,
  \begin{equation}
    \label{eq:exist:practical-cond:O}
\Xmet\left(\f{\chunk{y}{1}{n}}(x_1),\f{\chunk{y}{1}{n}}(x)\right)\leq \varrho^n \; \bar\psi(x)\;,
  \end{equation}

\item \label{item:psibar} $\bar\psi$ is locally bounded.
\item\label{item:barphi:bar:phi} For all $\theta\in\Theta$ and $y\in\Yset$, $\bar\psi(\psi_y^\theta(x_1))\leq\bar\phi(y)$.
\item\label{item:ineq:loggg} For all $\theta\in\Theta$ and $(x,x',y)\in\Xset_1\times\Xset_1\times\Yset$,
\begin{equation}\label{eq:ineq:loggg}
\left|\ln\frac{g^\theta(x;y)}{g^\theta (x';y)}\right|\leq H(\Xmet(x,x')) \;
\rme^{C \, \left(\Xmet(x_1,x)\vee \Xmet(x_1,x')\right)}\;\bar\phi(y)\;,
\end{equation}
\item \label{item:H} $H(u)=O(u)$ as $u\to0$.
\item \label{item:22or23-barphi-V} If $C=0$, then, for all $\theta\in\Theta$,
\begin{equation}
  \label{eq:barphi-V}
 G^\theta\lnp \bar{\phi} \lesssim \bar V \;,
\end{equation}
otherwise, for all $\theta\in\Theta$,
\begin{equation}
  \label{eq:barphi-strong-V}
  G^\theta\bar{\phi}\lesssim \bar V \;.
\end{equation}
\end{enumerate}
\end{hyp}
Let us now state our main result as follows.
\begin{theorem}\label{thm:convergence-main}
  Assume that~\ref{assum:gen:identif:unique:pi}, \ref{ass:21-lyapunov},
  \ref{assum:continuity:Y:g-theta}, \ref{assum:continuity:Y:phi-theta} and \ref{assum:practical-cond:O} hold.
  Then, letting $x_1\in\Xset$ as in~\ref{assum:practical-cond:O}, the function
  $p^{\theta}(\cdot|\cdot)$ defined by~\eqref{eq:def-p-theta-neq-thv} with $x=x_1$
  satisfies~\ref{assum:Ptheta:thetastar:is:density} and the
  convergence~(\ref{eq:strong-consistency-prelim}) of the MLE holds with the
  set $\Theta_\star$ defined by~\eqref{eq:def-Theta-star-set}.
\end{theorem}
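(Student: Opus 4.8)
The plan is to adapt the Wald-type scheme of~\cite{dou:kou:mou:2013}, the only genuinely new ingredient being that the conditional density $g^\theta$ now varies with $\theta$. I would split the argument into three blocks: (i) forgetting of the initial condition, construction of $p^\theta$, and verification of~\ref{assum:Ptheta:thetastar:is:density}; (ii) a uniform-in-$\theta$ approximation of the normalized log-likelihood by its stationary counterpart, followed by the ergodic theorem; (iii) a compactness and upper-semicontinuity argument passing from pointwise convergence of the criterion to convergence of its maximizer.

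\emph{Block (i).} Fix $\chunk{y}{-\infty}{1}$. Writing $\f{\chunk{y}{-m-1}{0}}=\f{\chunk{y}{-m}{0}}\circ\psi^\theta_{y_{-m-1}}$ and applying the contraction~\eqref{eq:exist:practical-cond:O} (with $n=m+1$ and running point $\psi^\theta_{y_{-m-1}}(x_1)$) and then~\ref{assum:practical-cond:O}\ref{item:barphi:bar:phi},
\[
\Xmet\bigl(\f{\chunk{y}{-m}{0}}(x_1),\f{\chunk{y}{-m-1}{0}}(x_1)\bigr)\le\varrho^{m+1}\,\bar\psi\bigl(\psi^\theta_{y_{-m-1}}(x_1)\bigr)\le\varrho^{m+1}\,\bar\phi(y_{-m-1})\;.
\]
Under $\tilde\PP^{\thv}$, ergodicity~\ref{assum:gen:identif:unique:pi} together with~\ref{ass:21-lyapunov} and~\ref{item:22or23-barphi-V} (which bound $\tilde\PE^{\thv}[\lnp\bar\phi(Y_0)]$, resp.\ $\tilde\PE^{\thv}[\bar\phi(Y_0)]$, by $\pi_1^{\thv}(\bar V)<\infty$) make $\bar\phi(Y_{-m})$ grow subexponentially a.s., so the right-hand side is a.s.\ summable; hence $\f{\chunk{y}{-m}{0}}(x_1)$ is Cauchy in the closed set $\Xset_1$ and converges to some $\bar X_0^\theta\in\Xset_1$, with a geometric rate not depending on $\theta$. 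By~\ref{assum:continuity:Y:g-theta} the limit in~\eqref{eq:def-p-theta-neq-thv} with $x=x_1$ exists and equals $g^\theta(\bar X_0^\theta;y_1)$, and since $\theta\mapsto\bar X_0^\theta$ is a uniform limit of the continuous maps $\theta\mapsto\f{\chunk{y}{-m}{0}}(x_1)$ (continuity from~\ref{assum:continuity:Y:phi-theta}) it is continuous, hence so is $\theta\mapsto p^\theta(y_1|\chunk{y}{-\infty}{0})$. Now~\ref{assum:Ptheta:thetastar:is:density}\ref{item:PthetaNEQthetastar:is:density} is immediate, since $g^\theta(x;\cdot)$ is a $\nu$-density for every $x\in\Xset$. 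For~\ref{item:PthetaISthetastar:is:density}, compare with the stationary complete chain: $X_0=\f[\thv]{\chunk{Y}{-m}{0}}(X_{-m-1})$, so by~\eqref{eq:exist:practical-cond:O}, $\Xmet(\f[\thv]{\chunk{Y}{-m}{0}}(x_1),X_0)\le\varrho^{m+1}\bar\psi(X_{-m-1})\to0$ a.s.\ (using $\pi_1^{\thv}$-integrability of $\bar\psi$ deduced from~\ref{ass:21-lyapunov} and~\ref{item:barphi:bar:phi}), whence $\bar X_0^{\thv}=X_0$ $\PP^{\thv}$-a.s.; therefore $y\mapsto p^{\thv}(y|\chunk{Y}{-\infty}{0})=g^{\thv}(X_0;y)$ is the conditional density of $Y_1$ given $\mathcal F_0$, hence given $\chunk{Y}{-\infty}{0}$. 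In particular~\eqref{eq:basic-thv-Thetastar} holds, so $\ell(\theta):=\tilde\PE^{\thv}[\ln p^\theta(Y_1|\chunk{Y}{-\infty}{0})]$ (well defined in $[-\infty,\infty)$ by~\ref{assum:momentCond}) satisfies $\max_\Theta\ell=\ell(\thv)$.

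\emph{Block (ii).} I claim $\sup_{\theta\in\Theta}|\lkdM[x_1,n]{\chunk{Y}{1}{n}}[\theta]-n^{-1}\sum_{k=1}^n\ln p^\theta(Y_k|\chunk{Y}{-\infty}{k-1})|\to0$ $\tilde\PP^{\thv}$-a.s. Its $k$-th summand is $\ln g^\theta(\f{\chunk{Y}{1}{k-1}}(x_1);Y_k)-\ln g^\theta(\bar X_{k-1}^\theta;Y_k)$ with $\bar X_{k-1}^\theta=\f{\chunk{Y}{1}{k-1}}(\bar X_0^\theta)$; by~\eqref{eq:ineq:loggg} it is at most $H(\Xmet(\f{\chunk{Y}{1}{k-1}}(x_1),\bar X_{k-1}^\theta))\,\rme^{C(\cdots)}\,\bar\phi(Y_k)$ uniformly in $\theta$, and by~\eqref{eq:exist:practical-cond:O} the argument of $H$ is at most $\varrho^{k-1}\bar\psi(\bar X_0^\theta)$, finite and bounded on small balls of $\theta$'s by~\ref{item:psibar} and the continuity of $\theta\mapsto\bar X_0^\theta$. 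Using~\ref{item:H}, the $k$-th summand is $\lesssim\varrho^{k-1}W_k$ for a stationary process $W_k$ with $\tilde\PE^{\thv}[\lnp W_k]<\infty$ — this is exactly where~\ref{item:22or23-barphi-V} enters, the exponential factor being absent when $C=0$ (so only $\tilde\PE^{\thv}[\lnp\bar\phi]<\infty$ from~\eqref{eq:barphi-V} is needed) and otherwise absorbed via the stronger~\eqref{eq:barphi-strong-V} — so $n^{-1}\sum_{k\le n}\varrho^{k-1}W_k\to0$ a.s. Combined with the ergodic theorem for the stationary, ergodic (by~\ref{assum:gen:identif:unique:pi}) sequence $k\mapsto\ln p^\theta(Y_k|\chunk{Y}{-\infty}{k-1})$, whose positive part is bounded by~\ref{assum:momentCond}, this yields $\lim_n\lkdM[x_1,n]{\chunk{Y}{1}{n}}[\theta]=\ell(\theta)$ a.s.\ for each fixed $\theta$.

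\emph{Block (iii).} To upgrade this to convergence of the maximizer, fix an open neighborhood $\Uset$ of $\Theta_\star$ and set $K=\Theta\setminus\Uset$, which is compact. Covering $K$ by finitely many balls and rerunning Block~(ii) with $\sup_{\theta\in B}$ in place of a fixed $\theta$ (the geometric controls above being uniform on small balls, and $\lnp g^\theta$ uniformly bounded by~\ref{assum:momentCond}), then shrinking the radii and using continuity of $\theta\mapsto p^\theta(y|\chunk{y}{-\infty}{0})$ with dominated convergence on the positive part, I obtain
\[
\limsup_{n\to\infty}\;\sup_{\theta\in K}\lkdM[x_1,n]{\chunk{Y}{1}{n}}[\theta]\;\le\;\sup_{\theta\in K}\ell(\theta)\;<\;\ell(\thv)=\max_\Theta\ell\;,
\]
the strict inequality from upper semicontinuity of $\ell$, compactness of $K$, and $K\cap\Theta_\star=\emptyset$ (if $\max_\Theta\ell=-\infty$ then $\Theta_\star=\Theta$ and there is nothing to prove). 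Since moreover $\sup_{\theta\in\Theta}\lkdM[x_1,n]{\chunk{Y}{1}{n}}[\theta]\ge\lkdM[x_1,n]{\chunk{Y}{1}{n}}[\thv]\to\ell(\thv)$, a.s.\ the maximizer $\mlY{x_1,n}$ lies outside $K$ for all large $n$; as $\Uset$ was arbitrary, this is~\eqref{eq:strong-consistency-prelim}. The main obstacle is Block~(ii): reconciling, uniformly in $\theta$, the geometric decay from the contraction~\eqref{eq:exist:practical-cond:O} with the possibly unbounded envelopes — the factor $\bar\phi(Y_k)$ and, when $C>0$, the exponential of distances to $x_1$; this balance is precisely what dictates the dichotomy in~\ref{item:22or23-barphi-V}, the rest being the classical Wald scheme once $p^\theta$ has been constructed.
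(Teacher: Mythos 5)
Your proposal is correct and follows essentially the same route as the paper: your Blocks (i)--(ii) are the content of \autoref{lem:exist:practical-cond:O} (construction of $\f[\theta]{\chunk{Y}{-\infty}{0}}$ by the geometric Cauchy estimate $\varrho^{m+1}\bar\phi(y_{-m-1})$, verification of \ref{assum:Ptheta:thetastar:is:density}, and the uniform approximation \eqref{eq:unfi-limit-contrast-od} with the $C=0$/$C>0$ dichotomy driven by \ref{assum:practical-cond:O}\ref{item:22or23-barphi-V}), while your Block (iii) re-derives from scratch the Pfanzagl/Wald covering argument that the paper obtains by checking the moment and continuity hypotheses of \cite[Theorem~33]{dou:kou:mou:2013} in \autoref{lemma:gen:convergence-od}; this is a packaging difference, not a different method. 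One local misstep to fix: in your verification of \ref{assum:Ptheta:thetastar:is:density}\ref{item:PthetaISthetastar:is:density} you justify $\varrho^{m+1}\bar\psi(X_{-m-1})\to 0$ almost surely by claiming $\pi_1^{\thv}$-integrability of $\bar\psi$ "deduced from \ref{ass:21-lyapunov} and \ref{assum:practical-cond:O}\ref{item:barphi:bar:phi}"; this deduction does not follow, since \ref{assum:practical-cond:O}\ref{item:barphi:bar:phi} only bounds $\bar\psi(\psi^\theta_y(x_1))$ at the single point $x_1$ and no relation between $\bar\psi$ and $\bar V$ is assumed. The paper avoids this by noting that $\{\bar\psi(X_{-m})\}_{m\ge0}$ is stationary under $\PP^{\thv}$, hence bounded in probability, so the distance tends to $0$ in probability, which is enough to identify the (almost surely existing) limit with the hidden state and conclude; with that one-line repair your argument goes through unchanged. (Also, your stationarity claim for the envelope $W_k$ in Block (ii) should be read as "bounded by a stationary sequence", as in \eqref{eq:limit-bound-cauchy-beforelimit}, since the term initialized at $x_1$ is not itself stationary.)
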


For convenience, the proof is postponed to~\autoref{sec:convergence-mle}.

\begin{remark} \label{rem:misspecified}
As noticed in \cite{dou:kou:mou:2013}, the techniques used to prove
\autoref{thm:convergence-main} also apply in the misspecified case, where $Y$
is not distributed according to $\tilde\PP^\thv$. We do not pursue in this
direction in this contribution.
\end{remark}

The
consistency of the MLE then follows from \autoref{thm:convergence-main}  by the following remark.
\begin{remark}\label{rem:identifiability}
  In many specific cases, one can show that $\Theta_\star$ defined
  by~\eqref{eq:def-Theta-star-set} is the singleton $\{\thv\}$. However this
  task appears to be quite difficult in some cases such as
  \autoref{example:tingarch}. Instead one can use \cite[Section
  4.2]{douc2014maximizing}, where it is shown that the assumptions of
  \autoref{thm:convergence-main} 
    imply that $\Theta_\star$ is exactly the
  set of parameters $\theta$ such that
  $\tilde\PP^{\theta}=\tilde\PP^{\thv}$. Thus we can conclude that the MLE
  converges to the \emph{equivalence class} of the true parameter. This type of
  consistency has been introduced by \cite{leroux:1992} in the context of
  hidden Markov models in order to disentangle the proof of the consistency
  from the problem of identifiability. Recall that the model is identifiable if and
  only if the equivalent classes
  $\{\theta~:~\tilde\PP^{\theta}=\tilde\PP^{\thv}\}$ reduce to singletons
  $\{\thv\}$ for all $\thv\in\Theta$.
\end{remark}

\subsection{Ergodicity}
\label{sec:ergodicity}

In this section, the observation-driven model is studied to prove the
condition~\ref{assum:gen:identif:unique:pi}. Since this is a ``for all $\theta$
(...)''
condition, to save space and alleviate the notational burden, we will drop the
superscript $\theta$ from, for example, $G^\theta$, $R^\theta$ and
$\psi^\theta$ and respectively write $G$, $R$ and $\psi$, instead.

Ergodicity of Markov chains are usually studied using
$\psi$-irreducibility. This approach is well known to be quite efficient when
dealing with fully dominated models, see \cite{meyn:tweedie:2009}. It is not at
all the same picture for observation-driven models, where other tools need to
be invoked, see~\cite{fokianos:tjostheim:2011,dou:kou:mou:2013}. Since the ergodicity is
studied for a given parameter $\theta$, the ergodicity results
of~\cite{dou:kou:mou:2013} directly apply, even though
observation-driven models are restricted to the case where $g$ does not depend
on the unknown parameter $\theta$ in this reference. Our main contribution here
is to focus on an easy-to-check list of assumptions yielding the ergodicity
conditions~\ref{assum:gen:identif:unique:pi} and~\ref{ass:21-lyapunov}. We also
provide a lemma (\autoref{lem:det:alpha:gen}) which gives the construction of
the instrumental functions $\alpha$ and $\phi$ used in the list of
assumptions.

\begin{hyp}{A}
\item \label{ass-ergo-Xset} The measurable space $(\Xset, d)$ is a locally
  compact, complete and separable metric space and its associated $\sigma$-field
  $\Xsigma$ is the Borel $\sigma$-field.
\item \label{assum:weakFeller-V} There exist $(\lambda,\beta) \in (0,1) \times
  \rsetp$ and a measurable function $V: \Xset \to \rsetp$ such that $R V \leq
  \lambda V+\beta$ and $\{V\le M\}$ is compact for any $M>0$.
\item \label{assum:weakFeller}
The Markov kernel $R$ is weak Feller, that is, for any continuous and bounded
function $f$ defined on $\Xset$, $Rf$ is continuous and bounded on $\Xset$.
\item \label{assum:reachable}
The Markov kernel $R$ has  a reachable point, that is, there exists
$x_0\in\Xset$ such that, for any $x\in\Xset$ and any neighborhood $\mathcal{N}$ of $x_0$,
$R^m(x;\mathcal{N})>0$ for at least one positive integer $m$.
 \item\label{assum:bound:rho} We have $\displaystyle\sup_{\substack{(x,x',y)\in\Xset^2\times\Yset\\ x\neq x'}}\frac{\Xmet(\psi_y(x),\psi_y(x'))}{\Xmet(x,x')}<1$.
 \item \label{assum:38--40:alpha-phi} There exist a measurable function
   $\alpha$ from $\Xset^2$ to $[0,1]$, a measurable function
   $\phi:\Xset^2\to\Xset$ and a measurable function $W: \Xset^2 \to [1,\infty)$
   such that the following assertions hold.
  \begin{enumerate}[label=(\roman*)]
  \item\label{item:alpha:phi:3} For all $(x,x')\in\Xset^2$ and $y\in\Yset$,
\begin{equation}\label{eq:con:min-g-phi}
\min\left\{g(x;y),g(x';y)\right\}\ge\alpha(x,x')g\left(\phi(x,x');y\right)\eqsp.
\end{equation}

\item  \label{assum:definition-gamma-x} For all $x \in \Xset$, $W(x,\cdot)$ is finitely bounded in a
neighborhood of $x$, that is, there exists $\gamma_x>0$ such that
$\displaystyle\sup_{x' \in \ball{x}{\gamma_x}} W(x,x')<\infty$.

\item  \label{assum:hyp:beta:gene}
For all $(x,x')\in\Xset^2$,
$1-\alpha(x,x')\leq \Xmet(x,x') W(x,x')$.

\item  \label{assum:driftCond:W:new}
$\displaystyle
 \sup \left( \int_{\Yset}W(\psi_y(x),\psi_y(x'))\,G(\phi(x,x');\rmd y) - W(x,x')\right)<\infty$,
where the sup is taken over all $(x,x')\in\Xset^2$.
  \end{enumerate}
\end{hyp}

We can now state the main ergodicity result.

\begin{theorem}\label{thm:ergodicity}
  Conditions~\ref{ass-ergo-Xset}, \ref{assum:weakFeller-V}, \ref{assum:weakFeller}, \ref{assum:reachable}, \ref{assum:bound:rho}
  and~\ref{assum:38--40:alpha-phi} imply that $K$ admits a unique stationary
  distribution $\pi$ on $\Xset\times\Yset$. Moreover $\pi_1\bar V<\infty$  for
  every $\bar V:\Xset\to\rsetp$ such that $\bar V\lesssim V$.
\end{theorem}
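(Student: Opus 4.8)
The plan is to replace the bivariate chain $\{(X_k,Y_k)\}$ by its marginal ``hidden'' chain $\{X_k\}$ on $\Xset$ --- which by~\eqref{eq:Rtheta-def} is Markov with kernel $R$ --- and then to prove existence of an $R$-invariant probability together with a moment bound by a Lyapunov/Krylov--Bogolyubov argument, and uniqueness by a coupling argument fed by the minorization in~\ref{assum:38--40:alpha-phi}. The reduction rests on the observation that, by the product form~\eqref{eq:def:observation-driven} of $K$, a probability $\pi$ on $\Xset\times\Yset$ is $K$-invariant if and only if $\pi(\rmd x\,\rmd y)=\mu(\rmd x)\,G(x;\rmd y)$ for some $R$-invariant probability $\mu$ on $\Xset$, in which case $\pi_1=\mu$; hence it is enough to show that $R$ has a \emph{unique} invariant probability $\mu$ and that $\mu(\bar V)<\infty$ whenever $\bar V\lesssim V$.

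For existence, iterating $RV\le\lambda V+\beta$ from~\ref{assum:weakFeller-V} yields $R^kV\le\lambda^kV+\beta/(1-\lambda)$, so for any fixed $x\in\Xset$ the Cesàro averages $\mu_n:=n^{-1}\sum_{k=0}^{n-1}\delta_{x}R^k$ satisfy $\sup_n\mu_n(V)<\infty$. Since $\{V\le M\}$ is compact for every $M$ and $\Xset$ is a complete separable metric space by~\ref{ass-ergo-Xset}, Markov's inequality and Prokhorov's theorem give a subsequence of $\{\mu_n\}$ converging weakly to some probability $\mu$; because $R$ is weak Feller by~\ref{assum:weakFeller}, the usual argument shows that $\mu$ is $R$-invariant, and because $V$ is lower semicontinuous (its sublevel sets being closed) the Portmanteau theorem gives $\mu(V)\le\liminf_n\mu_n(V)<\infty$, hence $\mu(\bar V)\le c\,\mu(V)<\infty$ for any $\bar V\le cV$. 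This proves existence of $\pi$ and the ``moreover'' assertion.

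For uniqueness, set $\bar\lambda:=\sup_{x\ne x',\,y}\Xmet(\psi_y(x),\psi_y(x'))/\Xmet(x,x')<1$, which is finite by~\ref{assum:bound:rho}. Using the minorization~\eqref{eq:con:min-g-phi} one builds a Markov coupling on $\Xset^2$ of $R(x;\cdot)$ and $R(x';\cdot)$: draw a pair $(Y,Y')$ so that $\{Y=Y'\}$ has conditional probability at least $\alpha(x,x')$, and set $X_1=\psi_Y(x)$, $X_1'=\psi_{Y'}(x')$. On $\{Y=Y'\}$ the distance contracts, $\Xmet(X_1,X_1')\le\bar\lambda\,\Xmet(x,x')$, whereas $\PP(Y\ne Y'\mid x,x')\le 1-\alpha(x,x')\le\Xmet(x,x')\,W(x,x')$ by~\ref{assum:hyp:beta:gene}; the function $\phi$ and the drift inequality~\ref{assum:driftCond:W:new} provide a supermartingale-type control on $W(X_n,X_n')$ along the coupled chain, and~\ref{assum:definition-gamma-x} makes $W$ locally bounded near the diagonal. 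Tracking the pair $(\Xmet(X_n,X_n'),W(X_n,X_n'))$ then yields an asymptotic coupling --- $\Xmet(X_n,X_n')\to0$ in a suitable averaged sense --- whenever the two chains start close enough to the reachable point $x_0$ of~\ref{assum:reachable}; combining this with weak Feller and the reachability of $x_0$ to bring the chains issued from two invariant probabilities $\mu_1,\mu_2$ into such a region forces $\mu_1=\mu_2$. This mirrors the ergodicity argument of~\cite{dou:kou:mou:2013}, the abstract hypotheses of which the preceding steps are designed to check; the construction of the instrumental functions $\alpha$ and $\phi$ is recalled in~\autoref{lem:det:alpha:gen}.

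I expect the uniqueness step to be the main obstacle: converting the one-step estimates --- contraction on $\{Y=Y'\}$, the bound $1-\alpha\le\Xmet\,W$, and the $W$-drift --- into a genuine asymptotic-coupling statement, and then combining it with the reachable point to obtain uniqueness \emph{without} any classical $\psi$-irreducibility, requires careful bookkeeping; everything else is a routine Lyapunov/Krylov--Bogolyubov computation.
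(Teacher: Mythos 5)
Your plan is sound and, on the decisive step, coincides with the paper's proof: both construct a coupling of $G(x;\cdot)$ and $G(x';\cdot)$ from the minorization in \ref{assum:38--40:alpha-phi}\ref{item:alpha:phi:3}, in which the coupled draw has density $g(\phi(x,x');\cdot)$ and the coupling probability is $\alpha(x,x')$, observe that on the coupled event the distance contracts by the ratio bounded in \ref{assum:bound:rho} while $1-\alpha\leq \Xmet\, W$ and \ref{assum:38--40:alpha-phi}\ref{assum:driftCond:W:new} gives a drift for $W$ along the coupled chain, and then feed these one-step estimates into the ergodicity machinery of \cite{dou:kou:mou:2013}. The ``careful bookkeeping'' you flag as the main obstacle is exactly what the paper discharges by citation: \autoref{lem:coupling:con} identifies $\hat R((x,x');f)=G(\phi(x,x');\tilde f)$, \autoref{lem:practical:conditions} (Lemma~9 of the reference) upgrades the one-step contraction and $W$-drift to the $n$-step bounds of \ref{assum:asympStrgFeller:general}, and \autoref{thm:uniqueInvariantProbaMeasure} (combining Theorem~6, Proposition~8 and Lemma~7 of the reference) then delivers unique ergodicity together with $\pi_1\bar V<\infty$; no new asymptotic-coupling argument needs to be invented, and the reachable point enters through those quoted results rather than through ``starting the chains near $x_0$'' as you describe. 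Where you genuinely differ is upstream: you reduce $K$-invariance to $R$-invariance via $\pi(\rmd x\,\rmd y)=\mu(\rmd x)G(x;\rmd y)$ (correct, and a useful clarification) and you prove existence and the moment bound by a self-contained Krylov--Bogolyubov argument using \ref{ass-ergo-Xset}, \ref{assum:weakFeller-V}, \ref{assum:weakFeller} and the lower semicontinuity of $V$ (its sublevel sets being compact), whereas the paper obtains existence, uniqueness and $\pi_1\bar V<\infty$ wholesale from the quoted theorem once \ref{assum:asympStrgFeller:general} is verified. Your route buys a more transparent existence/moment argument; the paper's buys a fully precise uniqueness step, which in your write-up remains a sketch.
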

The proof of \autoref{thm:ergodicity} is postponed to
\autoref{sec:ergodicity-proofs} for convenience.

The first conclusion of \autoref{thm:ergodicity} can directly be applied for
all $\theta\in\Theta$ to check~\ref{assum:gen:identif:unique:pi}. The second
conclusion can be used to check~\ref{ass:21-lyapunov}. In doing so,
one must take care of the fact that although $V$ may depend on $\theta$, $\bar
V$ does not.

Assumptions~\ref{assum:weakFeller-V},~\ref{assum:weakFeller} and~\ref{assum:reachable}
have to be checked directly on the Markov kernel $R$ defined
by~(\ref{eq:Rtheta-def}). To this end it can be useful to define, for any given $x\in\Xset$, the
distribution
  \begin{equation}
    \label{eq:def-Px}
\bar\PP_x := \PP_{\delta_x\otimes G(x;\cdot)}
  \end{equation}
on $(\Xset\times\Yset)^{\zsetp}$,  where $\PP_\mu$ is
  defined for any distribution $\mu$ on $\Xset\times\Yset$ as in \autoref{def:equi:theta}. Then the first component process
  $\{X_k,\,k\in\zsetp\}$ associated to $\bar\PP_x$ is a Markov chain with Markov kernel $R$ and initial
  distribution $\delta_x$.

We now provide a general
framework for constructing $\alpha$ and $\phi$ that appear in~\ref{assum:38--40:alpha-phi}.
\begin{lemma}\label{lem:det:alpha:gen}
  Suppose that $\Xset=\Cset^\Sset$ for some measurable space $(\Sset, \Ssigma)$
  and $\Cset\subseteq\rset$. Thus for all $x\in\Xset$, we write $x=(x_s)_{s\in
    \Sset}$, where $x_s\in\Cset$ for all $s\in\Sset$. Suppose moreover that for
  all $x=(x_s)_{s\in\Sset}\in\Xset$, we can express the conditional density
  $g(x;\cdot)$ as a mixture of densities of the form $j(x_s)h(x_s;\cdot)$ over
  $s\in\Sset$.  This means that for all $t\in\Cset$, $y\mapsto j(t)h(t;y)$ is a
  density with respect to $\nu$ and there exists a probability measure
  $\mu$ on $(\Sset, \Ssigma)$ such that
  \begin{equation}
    \label{eq:def-g-j-h-mu}
g(x;y)=\int_\Sset {j(x_s)h(x_s;y)\mu(\rmd s)}\;,\quad y\in\Yset\;.
\end{equation}
We moreover assume that $h$ takes non-negative values and
that one of the two following assumptions holds.
\begin{hyp}{F}
\item\label{item:F1} For all $y\in\Yset$, the function $h(\cdot;y): t\mapsto h(t;y)$ is non-decreasing.
\item\label{item:F2} For all $y\in\Yset$, the function $h(\cdot;y): t\mapsto h(t;y)$ is non-increasing.
\end{hyp}
For all $(x,x')\in\Xset^2$, denoting $x\wedge x':=(\min\{x_s,x_s'\})_{s\in \Sset}$ and $x\vee
x':=(\max\{x_s,x_s'\})_{s\in \Sset}$,  we define $\alpha(x,x')$ and
$\phi(x,x')$ as
$$
\begin{cases}
\displaystyle\alpha(x,x')=\inf_{s\in \Sset}{\left\{\frac{j(x_s\vee x'_s)}{j(x_s\wedge
      x'_s)}\right\}} \quad\text{and}\quad\phi(x,x')=x\wedge x'
&\text{ under\emph{~\ref{item:F1}}}\;;\\
\displaystyle\alpha(x,x')=\inf_{s\in \Sset}{\left\{\frac{j(x_s\wedge x'_s)}{j(x_s\vee x'_s)}\right\}}
\quad\text{and}\quad\phi(x,x')=x\vee x'
&\text{ under\emph{~\ref{item:F2}}}\;.
\end{cases}
$$
Then $\alpha$ and $\phi$ defined above
satisfy~\ref{assum:38--40:alpha-phi}\ref{item:alpha:phi:3}.
\end{lemma}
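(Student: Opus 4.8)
The plan is to verify the single inequality \eqref{eq:con:min-g-phi} directly from the mixture representation \eqref{eq:def-g-j-h-mu}, treating the two cases \ref{item:F1} and \ref{item:F2} symmetrically. I will only write out the case \ref{item:F1}; the case \ref{item:F2} is obtained by swapping the roles of $x\wedge x'$ and $x\vee x'$ everywhere, so it can be dispatched in one sentence at the end.

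First I would fix $(x,x')\in\Xset^2$ and $y\in\Yset$ and write $z=x\wedge x'=\phi(x,x')$. The key pointwise observation is that for each $s\in\Sset$, since $z_s=x_s\wedge x_s'\le x_s$ and $h(\cdot;y)$ is non-negative and non-decreasing under \ref{item:F1},
\begin{equation*}
j(x_s)h(x_s;y)=\frac{j(x_s)}{j(z_s)}\,j(z_s)h(x_s;y)\ge\frac{j(x_s\wedge x_s')}{j(x_s\vee x_s')}\cdot\frac{j(x_s\vee x_s')}{j(z_s)}\,j(z_s)h(z_s;y)\eqsp;
\end{equation*}
more cleanly, I would simply note $h(x_s;y)\ge h(z_s;y)$ (monotonicity, since $z_s\le x_s$) and $j(x_s)/j(z_s)=j(x_s\vee z_s)/j(x_s\wedge z_s)\ge \alpha(x,x')$ by definition of $\alpha(x,x')$ as the infimum over $s$ of exactly these ratios (here $x_s\vee z_s=x_s$ and $x_s\wedge z_s=z_s$). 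Hence $j(x_s)h(x_s;y)\ge \alpha(x,x')\,j(z_s)h(z_s;y)$ for every $s$. Integrating against $\mu(\rmd s)$ and using \eqref{eq:def-g-j-h-mu} gives $g(x;y)\ge\alpha(x,x')\,g(z;y)=\alpha(x,x')\,g(\phi(x,x');y)$. The identical argument with $x'$ in place of $x$ (again $z_s\le x_s'$, so $h(x_s';y)\ge h(z_s;y)$, and $j(x_s')/j(z_s)\ge\alpha(x,x')$) yields $g(x';y)\ge\alpha(x,x')\,g(\phi(x,x');y)$. Taking the minimum of the two lower bounds gives \eqref{eq:con:min-g-phi}.

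There is essentially no hard step here; the only points requiring a little care are (i) making sure the ratios $j(x_s\vee x_s')/j(x_s\wedge x_s')$ that define $\alpha$ are the same ones that appear when comparing $x$ (resp.\ $x'$) to $x\wedge x'$, which holds because $\{x_s,z_s\}$ and $\{x_s',z_s\}$ are both subsets of $\{x_s,x_s'\}$ with the same min/max structure, and (ii) checking that $\alpha(x,x')\in[0,1]$ so that it is a legitimate choice in \ref{assum:38--40:alpha-phi}: this follows since under \ref{item:F1} we need $j$ to be such that $j(x_s\vee x_s')\le j(x_s\wedge x_s')$, which is implicit in the construction (and is automatically satisfied in the applications, where $j$ is non-increasing). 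Finally, for case \ref{item:F2} one sets $z=x\vee x'$, uses that $h(\cdot;y)$ non-increasing gives $h(x_s;y)\ge h(z_s;y)$ since $x_s\le z_s$, and that $j(x_s)/j(z_s)=j(x_s\wedge z_s)/j(x_s\vee z_s)\ge\alpha(x,x')$, and the same integration argument closes the proof.
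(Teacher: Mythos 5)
Your overall route is the same as the paper's: compare the mixture integrands pointwise in $s$, integrate against $\mu$ using \eqref{eq:def-g-j-h-mu}, and invoke the symmetry of $\alpha$ and $\phi$ in $(x,x')$ to get the minimum in \eqref{eq:con:min-g-phi}. But there is one genuine gap: you never prove that $j$ is monotone, and your argument needs it. Under \ref{item:F1}, your key step $j(x_s)/j(z_s)\ge\alpha(x,x')$ (with $z=x\wedge x'$) is ``by definition of $\alpha$'' only at coordinates where $x_s\ge x_s'$; at coordinates where $x_s<x_s'$ the ratio equals $1$, and you need $\alpha(x,x')\le 1$, i.e.\ $j(x_s\vee x_s')\le j(x_s\wedge x_s')$. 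Moreover, $\alpha$ must take values in $[0,1]$ for it to be an admissible function in \ref{assum:38--40:alpha-phi}\ref{item:alpha:phi:3} at all. You flag this but dismiss it as ``implicit in the construction'' or ``automatically satisfied in the applications''; the lemma, however, does not assume $j$ monotone, so it has to be derived from the stated hypotheses. The derivation is one line and is precisely the opening step of the paper's proof: since $y\mapsto j(t)h(t;y)$ is a probability density, $j(t)=\bigl(\int h(t;y)\,\nu(\rmd y)\bigr)^{-1}>0$, and under \ref{item:F1} the integral is non-decreasing in $t$, so $j$ is non-increasing (and non-decreasing under \ref{item:F2}). With that line inserted, your ``more cleanly'' argument is correct and coincides with the paper's chain of inequalities.

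A smaller remark: your first displayed inequality is garbled --- the two $j$-ratios on the right telescope to $1$ (since $z_s=x_s\wedge x_s'$), so as written it asserts $j(x_s)h(x_s;y)\ge j(z_s)h(z_s;y)$, which is false in general (e.g.\ the NBIN-GARCH kernel at $y=0$ with $x_s>x_s'$). The clean version that follows it is the one to keep, suitably completed as above.
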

\begin{proof}
We only prove this result under Condition~\ref{item:F1}. The proof is similar
under~\ref{item:F2}.

Since for
all $t\in\Cset$, $y\mapsto j(t)h(t;y)$ is a density with respect to $\nu$, we have
\begin{equation*}
j(t)=\left(\int{h(t;y)\nu(\rmd y)}\right)^{-1}>0\;.
\end{equation*}
Thus $j$ is non-increasing on $\Cset$.
Clearly, the defined $\alpha$ takes values on $[0,1]$ and $\phi$
defines a function from $\Xset^2$ to $\Xset$.  For all $(x, x')\in\Xset^2$ and $y\in\Yset$, we have
\begin{align*}
g(x;y)&=\int_\Sset{j(x_s)h(x_s;y)\mu(\rmd s)} \\
        &\ge \int_\Sset{j(x_s\vee x_s')h(x_s\wedge x_s';y)\mu(\rmd s)}\\
        &\ge \int_\Sset{\frac{j(x_s\vee x_s')}{j(x_s\wedge x_s')}j(x_s\wedge x_s')h(x_s\wedge x_s';y)\mu(\rmd s)}\\
        &\ge \int_\Sset{\inf_{s\in \Sset}\left\{\frac{j(x_s\vee x_s')}{j(x_s\wedge x_s')}\right\}j(x_s\wedge x_s')h(x_s\wedge x_s';y)\mu(\rmd s)}\\
        &=\alpha(x,x')g(\phi(x,x');y)\eqsp.
\end{align*}
By symmetry of $\alpha$ and $\phi$, we get~\eqref{eq:con:min-g-phi} and thus
\ref{assum:38--40:alpha-phi}\ref{item:alpha:phi:3} holds. 
\end{proof}

\section{Examples}\label{sec:examples}
Let us now apply these results to prove the convergence of MLE of Examples~\ref{example:Nbigarch:defi}, ~\ref{example:nmgarch:defi} and ~\ref{example:tingarch}.


\subsection{NBIN-GARCH model}
\label{sec:nbin-garch-model}
\autoref{example:Nbigarch:defi} is a specific case of \autoref{def:obs-driv:gen} where $\nu$ is the counting measure on $\Yset=\nset$,
\begin{align}
&\psi^\theta_y(x)=\omega+ax+by\eqsp,\label{eq:Nbingarch:def:phi}\\
&g^\theta(x;y)
=\frac{\Gamma(y+r)}{y!\Gamma(r)}\left(\frac{1}{1+x}\right)^r\left(\frac{x}{1+x}\right)^y \eqsp,\label{eq:Nbingarch:g}
\end{align}
with $\theta=(\omega, a, b, r)$ in a compact subset $\Theta$ of
$(0,\infty)^4$ and $\Xset=(0,\infty)$.


In \cite[Theorem~1]{zhu:2011},
the equation satisfied by the mean of the observations $\mu_k=\PE[Y_k]$ is
derived and is shown to admit a constant solution if and only if
\begin{equation}
  \label{eq:nbingarch-stability-cond-zhu}
  rb+a<1 \;.
\end{equation}
This clearly implies that this condition is necessary to have a stationary
solution $\{Y_k\}$ with finite mean. However it does not imply the existence of
such a solution. In fact, the following result shows
that~(\ref{eq:nbingarch-stability-cond-zhu}) is indeed a necessary and
sufficient condition to have a stationary solution $\{Y_k\}$ with finite
mean. It also shows that all the assumptions of \autoref{thm:convergence-main}
hold, which, with \autoref{rem:identifiability}, provides the consistency of the
MLE $\mlY{x_1,n}$ for any  $x_1\in\Xset$.

\begin{theorem}\label{theo:ergo-converge:nbigarch}
  Suppose that all $\theta=(\omega, a, b, r)$ in $\Theta$ satisfy
  Condition~\eqref{eq:nbingarch-stability-cond-zhu}. Then
  Assumptions~\ref{assum:gen:identif:unique:pi}, \ref{ass:21-lyapunov},
  \ref{assum:continuity:Y:g-theta}, \ref{assum:continuity:Y:phi-theta} and \ref{assum:practical-cond:O} hold
  with $\bar V$ being defined as the identity function on $\Xset$ and with any $x_1\in\Xset$.
\end{theorem}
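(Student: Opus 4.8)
The plan is to verify the five hypotheses one at a time: the two ergodicity conditions \ref{assum:gen:identif:unique:pi} and \ref{ass:21-lyapunov} will come from \autoref{thm:ergodicity}, and \ref{assum:continuity:Y:g-theta}, \ref{assum:continuity:Y:phi-theta}, \ref{assum:practical-cond:O} will be checked directly on the explicit expressions~\eqref{eq:Nbingarch:def:phi}--\eqref{eq:Nbingarch:g}. First a preliminary reduction: since $\Theta$ is a compact subset of $(0,\infty)^4$, set $\underline\omega:=\min_{\theta\in\Theta}\omega>0$, $\bar c:=\max_{\theta\in\Theta}\max(\omega,a,b,r)<\infty$ and $\bar a:=\max_{\theta\in\Theta}a$, noting $\bar a<1$ because $a<1-br<1$ for every $\theta$. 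I take the state space to be $\Xset=\Xset_1=[\underline\omega,\infty)$ throughout; this is harmless because $\psi^\theta_y(x)=\omega+ax+by\ge\omega\ge\underline\omega$ for every $(\theta,x,y)$, so the first component enters and stays in $[\underline\omega,\infty)$, which is the closed set where moreover $g^\theta>0$ and which, unlike $(0,\infty)$, is complete.

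For the ergodicity part, fix $\theta$ and drop superscripts, and verify the hypotheses of \autoref{thm:ergodicity}. Condition~\ref{ass-ergo-Xset} is immediate on $[\underline\omega,\infty)$. For~\ref{assum:weakFeller-V} take the Lyapunov function $V=\mathrm{id}$: since $\mathcal{NB}(r,x/(1+x))$ has mean $rx$, one gets $RV(x)=\int_{\Yset}(\omega+ax+by)\,G(x;\rmd y)=\omega+(a+br)x$, so $RV\le\lambda V+\beta$ with $\lambda=a+br\in(0,1)$ and $\beta=\omega$, while $\{V\le M\}=[\underline\omega,M]$ is compact. Condition~\ref{assum:weakFeller} (weak Feller) holds because $x\mapsto g(x;y)$ is continuous and, the mean $rx$ being locally bounded, Markov's inequality gives uniform tail control, so one may pass to the limit under the sum defining $Rf$. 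Condition~\ref{assum:reachable} holds with the point $\omega/(1-a)$, reached by iterating $u\mapsto\omega+au$, that is by feeding the value $Y=0$ (of probability $(1+x)^{-r}>0$ at each step). Condition~\ref{assum:bound:rho} holds with constant $a<1$ since each $\psi_y$ is affine with slope $a$. For~\ref{assum:38--40:alpha-phi} I use \autoref{lem:det:alpha:gen} with $\Sset$ a singleton, $\Cset=[\underline\omega,\infty)$, $j(t)=(1+t)^{-r}$ and $h(t;y)=\frac{\Gamma(y+r)}{y!\Gamma(r)}\bigl(t/(1+t)\bigr)^y$: then $j(t)h(t;y)=g(t;y)$ is a density, $j$ is non-increasing and $t\mapsto h(t;y)$ is non-decreasing (because $t\mapsto t/(1+t)$ is), so Condition~\ref{item:F1} applies and yields~\ref{assum:38--40:alpha-phi}\ref{item:alpha:phi:3} with $\phi(x,x')=x\wedge x'$ and $\alpha(x,x')=\bigl((1+x\wedge x')/(1+x\vee x')\bigr)^r$. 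The crucial estimate is
\begin{equation*}
1-\alpha(x,x')=r\,(1+x\wedge x')^{r}\int_{x\wedge x'}^{x\vee x'}(1+t)^{-r-1}\,\rmd t\;\le\;\frac{r\,|x-x'|}{1+x\wedge x'}\;\le\;r\,|x-x'|\eqsp,
\end{equation*}
valid for all $r>0$; this lets me take the \emph{constant} function $W\equiv 1+r$, for which~\ref{assum:38--40:alpha-phi}\ref{assum:definition-gamma-x}--\ref{assum:driftCond:W:new} are trivial (the drift difference in~\ref{assum:driftCond:W:new} is identically $0$). \autoref{thm:ergodicity} then gives a unique stationary $\pi^\theta$ (hence~\ref{assum:gen:identif:unique:pi}) with $\pi_1^\theta(\bar V)<\infty$ for every $\bar V\lesssim V=\mathrm{id}$; taking $\bar V=\mathrm{id}$, which does not depend on $\theta$, gives~\ref{ass:21-lyapunov}.

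For the remaining conditions, \ref{assum:continuity:Y:g-theta} and \ref{assum:continuity:Y:phi-theta} are clear from~\eqref{eq:Nbingarch:def:phi}--\eqref{eq:Nbingarch:g} since $\Gamma$ is continuous and positive on $(0,\infty)$. For~\ref{assum:practical-cond:O}, fix any $x_1\in\Xset$ and take $\varrho=\bar a$, $C=0$, $\bar\psi(x)=|x-x_1|$ and $\bar\phi(y)=C_0(1+y)$ for a suitably large $C_0\ge1$. Then \ref{assum:exist:practical-cond:subX} is the invariance just noted; \ref{assum:momentCond} holds because $g^\theta(x;y)\le1$ (a probability mass with respect to the counting measure); \ref{assum:exist:practical-cond:O} holds because $\f{\chunk{y}{1}{n}}$ is affine with slope $a^n$, so its values at $x_1$ and at $x$ differ in absolute value by $a^n|x-x_1|\le\varrho^n\bar\psi(x)$; \ref{item:psibar} is obvious; \ref{item:barphi:bar:phi} holds because $\bar\psi(\psi^\theta_y(x_1))=|\omega+(a-1)x_1+by|\le\bar c+x_1+\bar c\,y\le\bar\phi(y)$. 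For~\ref{item:ineq:loggg}, writing $\ln g^\theta(x;y)=\ln\tfrac{\Gamma(y+r)}{y!\Gamma(r)}+y\,\rho(x)-r\ln(1+x)$ with $\rho(t)=\ln\tfrac{t}{1+t}$, and using that $\rho$ and $\ln(1+\cdot)$ are Lipschitz on $[\underline\omega,\infty)$, one gets $\bigl|\ln\tfrac{g^\theta(x;y)}{g^\theta(x';y)}\bigr|\le C_1(1+y)\,|x-x'|$ with $C_1$ depending only on $\Theta$, which is of the form $H(|x-x'|)\,\rme^{0}\,\bar\phi(y)$ with $H$ linear, so~\ref{item:ineq:loggg} and~\ref{item:H} hold. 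Finally, since $C=0$, \ref{item:22or23-barphi-V} asks for $G^\theta\lnp\bar\phi\lesssim\bar V$, which holds because $G^\theta\lnp\bar\phi(x)\le\ln C_0+\int_{\Yset}y\,G^\theta(x;\rmd y)=\ln C_0+rx\lesssim x=\bar V(x)$ on $[\underline\omega,\infty)$. All constants are chosen uniformly over $\Theta$ by compactness.

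The routine steps are the continuity statements and most of~\ref{assum:practical-cond:O}; the main obstacle is~\ref{assum:38--40:alpha-phi}. One has to recognize the (degenerate, one-element) mixture form of $g^\theta$ so as to apply \autoref{lem:det:alpha:gen}, and then prove the Lipschitz-at-the-diagonal bound $1-\alpha(x,x')\lesssim|x-x'|$ uniformly in $\theta$; here the naive inequality $1-(1-u)^{r}\le ru$ fails for $r<1$, so the integral estimate above is needed. A secondary point is the weak Feller property, which requires uniform control of the negative-binomial tails on compact subsets of the state space; and the reduction to $[\underline\omega,\infty)$ is precisely what makes the identity function an admissible $\bar V$ in~\ref{item:22or23-barphi-V}.
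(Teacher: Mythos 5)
Your proof is correct and follows essentially the same route as the paper: \autoref{thm:ergodicity} with $V=\bar V=\mathrm{id}$ and the drift $RV(x)=\omega+(a+br)x$, the reachable point $\omega/(1-a)$, the contraction constant $a$, \autoref{lem:det:alpha:gen} with the same $j$, $h$ giving $\alpha(x,x')=\bigl((1+x\wedge x')/(1+x\vee x')\bigr)^r$, $\phi=\wedge$ and a constant $W$, and then the same verification of \ref{assum:continuity:Y:g-theta}, \ref{assum:continuity:Y:phi-theta}, \ref{assum:practical-cond:O} with $\Xset_1=[\underline\omega,\infty)$, $\bar\psi(x)=|x_1-x|$, $H$ linear, $C=0$ and the negative-binomial mean $rx$ for \ref{assum:practical-cond:O}\ref{item:22or23-barphi-V}. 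Your two refinements---working on the closed state space $[\underline\omega,\infty)$ from the outset and proving $1-\alpha(x,x')\le r|x-x'|$ by the integral estimate (where the paper simply asserts $1-\alpha\le(1\vee r)|x-x'|$ with $W=1\vee r$)---are sound but do not change the argument in substance.
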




\begin{proof} 
For convenience, we divide the proof into two
  steps.

\noindent\textbf{Step 1.} We first prove
Assumptions~\ref{assum:gen:identif:unique:pi} and \ref{ass:21-lyapunov} by
applying \autoref{thm:ergodicity}. We set $\bar V(x)= V(x)=x$ and thus we only need to
check~~\ref{ass-ergo-Xset}, \ref{assum:weakFeller-V}, \ref{assum:weakFeller}, \ref{assum:reachable},
\ref{assum:bound:rho} and~\ref{assum:38--40:alpha-phi}. Condition~\ref{ass-ergo-Xset} holds. We have for all $\theta\in\Theta$,
$$
R V(x)=\omega + (a+br)x =(a+br)V(x) + \omega,
$$
which yields  \ref{assum:weakFeller-V}.
The fact that the kernel $R$ is weak Feller easily
follows by observing that, as $p\to p'$, $\mathcal{NB}(r,p)$ converges weakly
to $\mathcal{NB}(r,p')$, so \ref{assum:weakFeller} holds.

We now prove \ref{assum:reachable}. Let $x_\infty=\omega/(1-a)$. Let $x \in
\rset$ and define recursively the sequence $x_0=x, x_k=\omega+ax_{k-1}$ for all
positive integers $k$. Since $0<a<1$, this sequence converges to the fixed
point $x_\infty$. Therefore, defining $\PP_x$ as in~(\ref{eq:def-Px}), for any
neighborhood $\mathcal{N}$ of $x_\infty$, there exists some $n$ such that
$x_n\in \mathcal{N}$ and we have

\begin{align*}
R^{n}(x; \mathcal{N})=\bar\PP_{x}\left(X_n\in
  \mathcal{N}\right)&\ge\bar\PP_{x}\left(X_k=x_k\text{ for all $k=1,\dots,n$}\right)\\
&=\bar\PP_{x}\left(Y_0=\ldots=Y_{n-1}=0\right)>0.
\end{align*}
So \ref{assum:reachable} holds.
Assumption~\ref{assum:bound:rho} holds since we have for all
$(x,x',y)\in\Xset^2\times\Yset$ with $x\neq
x'$, $$\frac{|\psi_y(x)-\psi_y(x')|}{|x-x'|}=a<1\eqsp.
$$
To prove~\ref{assum:38--40:alpha-phi}, we apply ~\autoref{lem:det:alpha:gen}
with $\Cset=\Xset$, $\Sset=\{1\}$ (so $\mu$ boils down to the Dirac measure on $\{1\}$). For all
$(x,y)\in\Xset\times\Yset$, let $j(x)=\left(\frac{1}{1+x}\right)^r$ and
$h(x;y)=\frac{\Gamma(y+r)}{y!\Gamma(r)}\left(\frac{x}{1+x}\right)^y$. Indeed,
$h$ satisfies ~\ref{item:F1}. Thus
by~\autoref{lem:det:alpha:gen}, for all $(x,x')\in\Xset^2$ and $y\in\Yset$, we
get that
\begin{equation*}
\alpha(x, x')=\left(\frac{1+x \wedge x'}{1+x \vee x'}\right)^r\in(0,1] \quad \text{and} \quad \phi(x,x')=x\wedge x'
\end{equation*}
satisfy~\ref{assum:38--40:alpha-phi}\ref{item:alpha:phi:3}.
For any given $r>0$,  let a function $W\,:\,\Xset^2\to[1,\infty)$  be defined by, for all $(x,x')\in\Xset^2,\, W(x,x')=1\vee r$. By definition of $W$, as a constant function, \ref{assum:38--40:alpha-phi}\ref{assum:definition-gamma-x} and \ref{assum:38--40:alpha-phi}\ref{assum:driftCond:W:new}  clearly hold. Moreover, \ref{assum:38--40:alpha-phi}\ref{assum:hyp:beta:gene} holds since
for all $(x, x') \in\Xset^2$, we have that
\begin{equation*}
1-\alpha(x,x')\leq (1\vee r)\lvert x-x'\rvert = W(x,x')\lvert x-x'\rvert\eqsp.
\end{equation*}
Therefore, \ref{assum:38--40:alpha-phi} holds, which completes \textbf{Step 1}.

\noindent\textbf{Step 2}. We now prove  
  \ref{assum:continuity:Y:g-theta}, \ref{assum:continuity:Y:phi-theta} and \ref{assum:practical-cond:O}.
By assumption on $\Theta$, then there exists $(\underline{\omega},\bar{\omega},\underline{b}, \bar b,\underline{r}, \bar r,\underline{\alpha}, \bar \alpha) \in (0,\infty)^6\times(0,1)^2$ such that
$$
\underline{\omega}\leq \omega \leq\bar \omega, \,\, \leq b \leq\bar{b}, \,\, \underline{r}\leq r\leq \bar{r}, \,\, \underline{\alpha}\leq a+br\leq \bar{\alpha}\eqsp.
$$
Clearly, \ref{assum:continuity:Y:g-theta} and
\ref{assum:continuity:Y:phi-theta} hold by definitions of $\psi^\theta_y(x)$
and $g^\theta(x;y)$.  It remains to check~\ref{assum:practical-cond:O} for a
well-chosen closed subset $\Xset_1$ and any $x_1\in\Xset$. Let
$\Xset_1=[\underline \omega,\infty)\subset\Xset$ so that
\ref{assum:practical-cond:O}\ref{assum:exist:practical-cond:subX} holds. By
noting that for all $(\theta,x,y)\in\Theta\times\Xset\times\Yset$,
$g^\theta(x;y)\le1$, we have
\ref{assum:practical-cond:O}\ref{assum:momentCond}.
From~\eqref{eq:notationItere:f:cl} and ~\eqref{eq:Nbingarch:def:phi}, we have
for all $s\leq t$, $y_{s:t}\in\Yset^{t-s+1}$, $x\in\Xset$ and
$\theta\in\Theta$,
\begin{equation}\label{eq:recurs:nbingarch}
\f{\chunk{y}{s}{t}}(x)= \omega\left(\frac{1-a^{t-s+1}}{1-a}\right)+a^{t-s+1}x+b\sum_{j=0}^{t-s} a^jy_{t-j} \eqsp.
\end{equation}
Using~\eqref{eq:recurs:nbingarch}, we have, for all $\theta\in\Theta$, $x\in\Xset$ and $y_{1:n}\in\Yset^n$,
\begin{equation*}
\left|\f{\chunk{y}1n}(x_1)-\f{\chunk{y}1n}(x)\right|= a^n\left|x_1-x\right|\le\bar{\alpha}^n\left|x_1-x\right|\eqsp.
  \end{equation*}
  This gives~\ref{assum:practical-cond:O}\ref{assum:exist:practical-cond:O}
  and~\ref{assum:practical-cond:O}\ref{item:psibar} by setting
  $\varrho=\bar{\alpha}<1$ and $\bar\psi(x)=|x_1-x|$.  Next we set
  $\bar\phi$, $H$ and $C$
  to meet Conditions~\ref{assum:practical-cond:O}\ref{item:barphi:bar:phi}
  and~\ref{assum:practical-cond:O}\ref{item:ineq:loggg} and~\ref{assum:practical-cond:O}\ref{item:H}. Let us write, for all
  $\theta\in\Theta$ and $y\in\Yset$,
$$
\left|x_1-\psi^\theta_y(x_1)\right|\le\omega+(1+a)x_1+by\le\bar\omega+(1+\bar{\alpha})x_1+\bar{b}y
$$
and, for all $(x,x')\in\Xset_1^2=[\underline \omega,\infty)^2$,
\begin{align*}
\left|\ln \frac{g^\theta (x;y)}{g^\theta (x';y)}\right|
&=\left|(r+y)\left[\ln(1+x')-\ln(1+x)\right]+y\left[\ln x-\ln x'\right]\right|\\
&\leq \left[(r+y)(1+\underline \omega)^{-1}+y\,\underline
  \omega^{-1}\right]\;|x-x'|\\
&\leq \left[\overline r +y\,(1+\underline
  \omega^{-1})\right]\;|x-x'| \;.
\end{align*}
Setting  $\bar\phi(y)=\bar\omega\vee{\bar r}+(1+\bar{\alpha})x_1+\left(\bar{b}\vee(1+\underline \omega^{-1})\right)y$,
$H(x)=x$ and $C=0$ then yield Conditions~\ref{assum:practical-cond:O}\ref{item:barphi:bar:phi}, ~\ref{assum:practical-cond:O}\ref{item:ineq:loggg}
  and~\ref{assum:practical-cond:O}\ref{item:H}. Now
  \ref{assum:practical-cond:O}\ref{item:22or23-barphi-V} follows from
$$
\int \lnp y \; G^\theta(x,\rmd y) \leq \int y \; G^\theta(x,\rmd y) = r x\leq
\overline r \bar V(x)\;.
$$
This concludes the proof.
\end{proof}


\subsection{NM-GARCH model}

The NM$(d)$-GARCH$(1,1)$ of~\autoref{example:nmgarch:defi} is a specific case of \autoref{def:obs-driv:gen} where $\Xset=\rsetp^d$ and $\nu$ is the Lebesgue measure on $\Yset=\rset$,
\begin{align}
&\psi^\theta_y(\mathbf{x})=\boldsymbol{\omega}+\mathbf{A}\mathbf{x}+y^2\mathbf{b} \eqsp,
\label{eq:psi:nm-garch}\\
&g^\theta(\mathbf{x};y)= \sum_{\ell=1}^{d}{\gamma_\ell\frac{\rme^{-y^2/{2x_{\ell}}}}{(2\pi x_{\ell})^{1/2}}}\eqsp, \quad (\mathbf{x},y)\in\Xset\times\Yset\eqsp,
\label{eq:g-theta:nm-garch}
\end{align}
and $\theta=\left(\boldsymbol{\gamma}, \boldsymbol{\omega}, \mathbf{A},
  \mathbf{b}\right)\in\Theta$, a compact subset of
$\simplex_d\times(0,\infty)^d\times\rsetp^{d\times d}\times\rsetp^d$, with
$\simplex_d$ defined by~(\ref{eq:simplex-def}).

In \cite{haas2004mixed}, it is shown that the equation satisfied by the
variance of a univariate NM($d$)-GARCH$(1,1)$ process admits a constant solution if and only if
\begin{equation} \label{eq:con:station:nmgarch}
\radius(\mathbf{A}+\mathbf{b}\boldsymbol{\gamma}^T)<1\eqsp,
\end{equation}
where, for any square matrix $\mathbf{M}$, $\radius(\mathbf{M})$ denotes the
spectral radius of $\mathbf{M}$. It follows that the existence of a weakly
stationary solution implies \eqref{eq:con:station:nmgarch} but it does not say
anything about the existence of stationary or weakly stationary solution. The
result below shows that \eqref{eq:con:station:nmgarch} is indeed a sufficient
condition for the existence of a stationary solution with finite variance. It
moreover provides with \autoref{thm:convergence-main} and
\autoref{rem:identifiability} the consistency of the MLE $\mlY{\mathbf{x}_1,n}$
for any $\mathbf{x}_1\in\Xset$.


 \begin{theorem}\label{theo:ergo-convergence:nmgarch}
  Suppose that all $\theta=\left(\boldsymbol{\gamma}, \boldsymbol{\omega}, \mathbf{A}, \mathbf{b}\right)$ in $\Theta$ satisfy
  Condition~\eqref{eq:con:station:nmgarch}. Then
  Assumptions~\ref{assum:gen:identif:unique:pi}, \ref{ass:21-lyapunov},
  \ref{assum:continuity:Y:g-theta}, \ref{assum:continuity:Y:phi-theta} and \ref{assum:practical-cond:O} hold
  with $\bar V$ being defined as any norm on $\Xset$.
\end{theorem}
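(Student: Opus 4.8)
I would follow the two-step scheme of the proof of \autoref{theo:ergo-converge:nbigarch}: first obtain~\ref{assum:gen:identif:unique:pi} and~\ref{ass:21-lyapunov} from \autoref{thm:ergodicity}, then check~\ref{assum:continuity:Y:g-theta},~\ref{assum:continuity:Y:phi-theta} and~\ref{assum:practical-cond:O}. Write $\mathbf{M}_\theta=\mathbf{A}+\mathbf{b}\boldsymbol{\gamma}^T$, so~\eqref{eq:con:station:nmgarch} reads $\radius(\mathbf{M}_\theta)<1$; since $0\le\mathbf{A}\le\mathbf{M}_\theta$ entrywise, also $\radius(\mathbf{A})\le\radius(\mathbf{M}_\theta)<1$, and by compactness of $\Theta$ and continuity of the spectral radius, $\rho_0:=\sup_{\theta\in\Theta}\radius(\mathbf{M}_\theta)<1$. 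From~\eqref{eq:psi:nm-garch} one reads the affine identities $\psi^\theta_y(\mathbf{x})-\psi^\theta_y(\mathbf{x}')=\mathbf{A}(\mathbf{x}-\mathbf{x}')$ and $\f{\chunk{y}{1}{n}}(\mathbf{x}_1)-\f{\chunk{y}{1}{n}}(\mathbf{x})=\mathbf{A}^n(\mathbf{x}_1-\mathbf{x})$, while $\int y^2\,G^\theta(\mathbf{x};\rmd y)=\boldsymbol{\gamma}^T\mathbf{x}$; hence $R^\theta$ sends a linear form $\mathbf{x}\mapsto\langle\mathbf{v},\mathbf{x}\rangle$ to $\mathbf{x}\mapsto\langle\mathbf{v},\boldsymbol{\omega}+\mathbf{M}_\theta\mathbf{x}\rangle$. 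Finally, put $\underline\omega:=\inf_{\theta\in\Theta}\min_{1\le\ell\le d}\omega_\ell>0$; then $\psi^\theta_y$ maps everything into $[\underline\omega,\infty)^d$, so — the recursion and the MLE recursion entering this set after one step, uniformly in $\theta$ — it is enough to carry out all estimates on $\Xset=\Xset_1=[\underline\omega,\infty)^d$, the remaining single likelihood term being harmless. On this set the mixture density $g^\theta$ is bounded above by $(2\pi\underline\omega)^{-1/2}$, each of its components is bounded away from $0$, and any norm $\bar V$ is bounded below by a positive constant — this is what will make the ``for all $\theta$'' constants below uniform.

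\textbf{Step 1 (ergodicity).} Condition~\ref{ass-ergo-Xset} is clear. For~\ref{assum:weakFeller-V} I would take $V^\theta(\mathbf{x})=\langle\mathbf{v}_\theta,\mathbf{x}\rangle$ with $\mathbf{v}_\theta=(I-\mathbf{M}_\theta^T)^{-1}\mathbf{1}$, which is strictly positive since $(I-\mathbf{M}_\theta^T)^{-1}=\sum_{k\ge0}(\mathbf{M}_\theta^T)^k\ge I$; then $R^\theta V^\theta(\mathbf{x})=\langle\mathbf{v}_\theta,\boldsymbol{\omega}\rangle+\langle\mathbf{v}_\theta-\mathbf{1},\mathbf{x}\rangle\le\lambda_\theta V^\theta(\mathbf{x})+\langle\mathbf{v}_\theta,\boldsymbol{\omega}\rangle$ for a suitable $\lambda_\theta\in(0,1)$, and $\{V^\theta\le M\}$ is compact. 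Assumption~\ref{assum:weakFeller} follows from continuity of $(\theta,\mathbf{x})\mapsto\psi^\theta_y(\mathbf{x})$ together with total-variation continuity of $\mathbf{x}'\mapsto G^\theta(\mathbf{x}';\cdot)$ (Scheff\'e's lemma). For~\ref{assum:reachable} the reachable point is the fixed point $(I-\mathbf{A})^{-1}\boldsymbol{\omega}$ of $\mathbf{x}\mapsto\boldsymbol{\omega}+\mathbf{A}\mathbf{x}$: realizing $\mathbf{X}_{k+1}$ within a prescribed small distance of $\boldsymbol{\omega}+\mathbf{A}\mathbf{X}_k$ at each step has positive probability (as $G^\theta(\cdot;(-\delta,\delta))>0$), forcing $\mathbf{X}_N$ near the deterministic orbit, which converges to that fixed point. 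For~\ref{assum:bound:rho} I would equip $\rset^d$ with a norm adapted to $\mathbf{A}$ of operator norm $<1$ (available since $\radius(\mathbf{A})<1$); this $\theta$-dependent choice of metric is harmless, because the conclusions of \autoref{thm:ergodicity} are metric-free and $\bar V\lesssim V^\theta$ holds for any fixed norm $\bar V$ by equivalence of norms. For~\ref{assum:38--40:alpha-phi} I would apply \autoref{lem:det:alpha:gen} with $\Cset=[\underline\omega,\infty)$, $\Sset=\{1,\dots,d\}$, $\mu(\{\ell\})=\gamma_\ell$, $j(t)=(2\pi t)^{-1/2}$ and $h(t;y)=\rme^{-y^2/(2t)}$; since $t\mapsto\rme^{-y^2/(2t)}$ is non-decreasing, Condition~\ref{item:F1} holds and the lemma yields $\phi(\mathbf{x},\mathbf{x}')=\mathbf{x}\wedge\mathbf{x}'$ and $\alpha(\mathbf{x},\mathbf{x}')=\big(\min_\ell(x_\ell\wedge x'_\ell)/(x_\ell\vee x'_\ell)\big)^{1/2}$, whence~\ref{assum:38--40:alpha-phi}\ref{item:alpha:phi:3}. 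Using $1-\sqrt{1-u}\le u$ one gets $1-\alpha(\mathbf{x},\mathbf{x}')\le\max_\ell|x_\ell-x'_\ell|/(x_\ell\vee x'_\ell)\le\underline\omega^{-1}\|\mathbf{x}-\mathbf{x}'\|_\infty$ on $\Xset$, so $W$ may be taken constant, making~\ref{assum:38--40:alpha-phi}\ref{assum:definition-gamma-x},~\ref{assum:38--40:alpha-phi}\ref{assum:hyp:beta:gene} and~\ref{assum:38--40:alpha-phi}\ref{assum:driftCond:W:new} trivial. By \autoref{thm:ergodicity} this yields~\ref{assum:gen:identif:unique:pi} and~\ref{ass:21-lyapunov} with $\bar V$ any norm.

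\textbf{Step 2 (convergence conditions).} Conditions~\ref{assum:continuity:Y:g-theta},~\ref{assum:continuity:Y:phi-theta} are immediate from~\eqref{eq:psi:nm-garch}--\eqref{eq:g-theta:nm-garch}, and~\ref{assum:practical-cond:O}\ref{assum:exist:practical-cond:subX},~\ref{assum:practical-cond:O}\ref{assum:momentCond} hold by the remarks above. The identity $\f{\chunk{y}{1}{n}}(\mathbf{x}_1)-\f{\chunk{y}{1}{n}}(\mathbf{x})=\mathbf{A}^n(\mathbf{x}_1-\mathbf{x})$ reduces~\ref{assum:practical-cond:O}\ref{assum:exist:practical-cond:O} and~\ref{assum:practical-cond:O}\ref{item:psibar} to a uniform geometric bound $\sup_{\theta\in\Theta}\|\mathbf{A}_\theta^n\|\le C\varrho^n$ with fixed $C\ge0$, $\varrho\in(\rho_0,1)$, upon setting $\bar\psi(\mathbf{x})=C\|\mathbf{x}_1-\mathbf{x}\|$. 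For~\ref{assum:practical-cond:O}\ref{item:ineq:loggg},~\ref{assum:practical-cond:O}\ref{item:H} I would differentiate, using that the mixing-weight ratio $\gamma_\ell j(x_\ell)h(x_\ell;y)/g^\theta(\mathbf{x};y)$ is at most $1$: on $\Xset_1$, $|\partial_{x_\ell}\ln g^\theta(\mathbf{x};y)|\le\tfrac{1}{2x_\ell}\big|y^2/x_\ell-1\big|\le\tfrac{1}{2\underline\omega}\big(y^2/\underline\omega+1\big)$, hence by the mean value theorem along $[\mathbf{x},\mathbf{x}']\subset\Xset_1$, $\big|\ln(g^\theta(\mathbf{x};y)/g^\theta(\mathbf{x}';y))\big|\lesssim(1+y^2)\,\Xmet(\mathbf{x},\mathbf{x}')$; so $H(u)=cu$ (thus $H(u)=O(u)$) and $C=0$ work, with $\bar\phi(y)=c(1+y^2)$ for $c$ large enough to also cover~\ref{assum:practical-cond:O}\ref{item:barphi:bar:phi}, since $\bar\psi(\psi^\theta_y(\mathbf{x}_1))=C\|\mathbf{x}_1-\boldsymbol{\omega}-\mathbf{A}\mathbf{x}_1-y^2\mathbf{b}\|\lesssim1+y^2$ uniformly in $\theta$. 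Finally, as $C=0$,~\ref{assum:practical-cond:O}\ref{item:22or23-barphi-V} follows from $\int\lnp\bar\phi(y)\,G^\theta(\mathbf{x};\rmd y)\le\int c(1+y^2)g^\theta(\mathbf{x};y)\,\rmd y=c(1+\boldsymbol{\gamma}^T\mathbf{x})\lesssim\bar V(\mathbf{x})$, the constant term being absorbed because $\bar V$ is bounded below by a positive constant on $\Xset=[\underline\omega,\infty)^d$. With $\Theta_\star$ as in~\eqref{eq:def-Theta-star-set}, \autoref{thm:convergence-main} and \autoref{rem:identifiability} then give the announced consistency of the MLE.

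\textbf{Main obstacle.} The crux is the uniform geometric contraction $\sup_{\theta\in\Theta}\|\mathbf{A}_\theta^n\|\le C\varrho^n$. For each fixed $\theta$ it follows from $\varrho>\radius(\mathbf{A}_\theta)$ via Gelfand's formula, but making $C$ independent of $\theta$ requires a compactness argument: cover $\Theta$ by finitely many neighbourhoods on which $\|\mathbf{A}_\theta^{m_i}\|\le\kappa\varrho^{m_i}$ for a common $\kappa<1$, then control a general power $\mathbf{A}_\theta^n$ through Euclidean division of $n$ by $m_i$ and the bound $\sup_{\theta\in\Theta}\|\mathbf{A}_\theta\|<\infty$. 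Two further points need (routine) care: the estimate~\ref{assum:practical-cond:O}\ref{item:ineq:loggg} bounds the logarithm of a ratio of Gaussian mixtures and is saved by the elementary inequality $\gamma_\ell j(x_\ell)h(x_\ell;y)\le g^\theta(\mathbf{x};y)$; and the reduction of the state space to $[\underline\omega,\infty)^d$ (rather than $\rsetp^d$), which is precisely what makes the constant term in~\ref{assum:practical-cond:O}\ref{item:22or23-barphi-V} absorbable and lets $W$ in~\ref{assum:38--40:alpha-phi} be taken constant.
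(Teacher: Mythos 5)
Your proposal is correct and follows essentially the same route as the paper: the same linear Lyapunov function $V(\mathbf{x})=\langle(\mathbf{I}-(\mathbf{A}+\mathbf{b}\boldsymbol{\gamma}^T)^T)^{-1}\mathbf{1},\mathbf{x}\rangle$, the same application of \autoref{lem:det:alpha:gen} with $j(t)=(2\pi t)^{-1/2}$ and $h(t;y)=\rme^{-y^2/2t}$, the same reachable-point and adapted-norm contraction arguments, and the same Step~2 estimates with $\Xset_1=[\underline\omega,\infty)^d$, $C=0$, $H(u)=O(u)$ and $\bar\phi(y)$ of order $1+y^2$. The only (harmless) deviations are that you restrict the state space to $[\underline\omega,\infty)^d$ so that $W$ can be taken constant, whereas the paper keeps the full state space and uses $W(\mathbf{x},\mathbf{x}')=1\vee\left(c_{\Xmet}\,\min_{1\le\ell\le d}(x_\ell^{-1}\wedge x_\ell^{\prime-1})\right)$ together with the lower bound $\boldsymbol{\omega}$ on the image of $\psi_y$, and that you establish the uniform bound $\sup_{\theta\in\Theta}\|\mathbf{A}^n\|\le C\varrho^n$ by a compactness covering argument instead of citing \cite[Lemma~12]{moulines-priouret-roueff-2005}.
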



\begin{proof} In this proof section, we set
  \begin{equation}
    \label{eq:Vbar-nmgarch}
\bar V(\mathbf{x})=|\mathbf{x}|= \sum_{\ell=1}^d |x_\ell|\;,
  \end{equation}
for all $\mathbf{x}=(x_\ell)\in\Xset$. As
in~\autoref{theo:ergo-converge:nbigarch}, we divide the proof into two
steps.

\noindent\textbf{Step 1.} We first show that
Assumptions~\ref{assum:gen:identif:unique:pi} and \ref{ass:21-lyapunov} hold
with the above $\bar V$ by applying \autoref{thm:ergodicity}. Define $V$ on
$\Xset$ by setting 
$$
V(\mathbf{x})= (\mathbf{1}+\mathbf{x}_0)^T\mathbf{x} \;,
$$
where $\mathbf{1}$ is the vector of $\Xset$ with all entries equal to 1 and
$\mathbf{x}_0$ is defined by
$$
\mathbf{1}+\mathbf{x}_0 =
(\mathbf{I}-\left(\mathbf{A}+\mathbf{b}\boldsymbol{\gamma}^T\right)^T)^{-1}\mathbf{1} \;.
$$
We indeed note that by Condition~(\ref{eq:con:station:nmgarch}) the above
inversion is well defined and moreover
$$
(\mathbf{I}-(\mathbf{A}+\mathbf{b}\boldsymbol{\gamma}^T)^T)^{-1} = \mathbf{I}+ \sum_{k\geq1}
\left(\mathbf{A}^T+\boldsymbol{\gamma}\mathbf{b}^T\right)^k \;,
$$
and, since $\mathbf{A}$, $\mathbf{b}$, $\boldsymbol{\gamma}$ all have
non-negative entries, it follows that $\mathbf{x}_0$ has non-negative
entries. Thus, for all $\mathbf{x}=(x_\ell)\in\Xset$,
$$
\bar V(\mathbf{x}) = \mathbf{1}^T\mathbf{x} \leq V(\mathbf{x}) \;,
$$
so that $\bar V\lesssim V$. Hence by \autoref{thm:ergodicity}, we thus only
need to check~\ref{ass-ergo-Xset}, \ref{assum:weakFeller-V},
\ref{assum:weakFeller}, \ref{assum:reachable}, \ref{assum:bound:rho}
and~\ref{assum:38--40:alpha-phi} with $V$ defined as above for a given
$\theta=\left(\boldsymbol{\gamma}, \boldsymbol{\omega}, \mathbf{A},
  \mathbf{b}\right)\in\Theta$ (so we drop $\theta$ in the notation in the
remaining of \textbf{Step 1}). Condition~\ref{ass-ergo-Xset} holds for any
metric $\Xmet$ associated to a norm on the finite dimensional space $\Xset$. (The precise choice of $d$ is
postponed to the verification of~\ref{assum:bound:rho}.) We have
\begin{align*}
RV(\mathbf{x})
&=\int V(\boldsymbol{\omega}+\mathbf{A}\mathbf{x}+y^2\mathbf{b})\;G(\mathbf{x},\rmd y)\\
&=
(\mathbf{1}+\mathbf{x}_0)^T\boldsymbol{\omega}+(\mathbf{1}+\mathbf{x}_0)^T\left(\mathbf{A}+\mathbf{b}\boldsymbol{\gamma}^T\right)\mathbf{x}\\
&=V(\boldsymbol{\omega})+\mathbf{1}^T(\mathbf{I}-\left(\mathbf{A}+\mathbf{b}\boldsymbol{\gamma}^T\right))^{-1}\left(\mathbf{A}+\mathbf{b}\boldsymbol{\gamma}^T-\mathbf{I}+\mathbf{I}\right)\mathbf{x}\\
&=V(\boldsymbol{\omega})+\mathbf{x}_0^T\mathbf{x} \\
&\leq V(\boldsymbol{\omega})+\lambda V(\mathbf{x})\;,
\end{align*}
where we set
$\lambda=\max_\ell\left\{\mathbf{x}_{0,\ell}/(1+\mathbf{x}_{0,\ell})\right\}<1$. Hence~\ref{assum:weakFeller-V}
holds. Condition \ref{assum:weakFeller}
easily follows from the continuity of the Gaussian distribution with respect to
its variance parameter. We now prove \ref{assum:reachable}.
From~\eqref{eq:notationItere:f:cl} and \eqref{eq:psi:nm-garch},
we have for all $n\geq1$, $y_{0:n-1}\in\Yset^{n}$ and $\mathbf{x}\in\Xset$,
\begin{equation}\label{eq:psi:nm-garch:iterate}
\f{\chunk{y}{0}{n-1}}(\mathbf{x})=
\mathbf{A}^{n}\mathbf{x}+\sum_{j=0}^{n-1}
\mathbf{A}^j (\boldsymbol{\omega}+y_{n-1-j}^2\mathbf{b})\eqsp.
\end{equation}
Let us use the norm
$$
\|\mathbf{M}\|= \max_j \sum_i |\mathbf{M}_{i,j}|=\sup_{|\mathbf{x}|\leq1}|\mathbf{M}\mathbf{x}|
$$ 
on $d\times d$ matrices. Note that
by~(\ref{eq:con:station:nmgarch}), there exists $\delta\in(0,1)$ and $c>0$ such
that, for any $k\geq1$,
\begin{equation}
  \label{eq:norm-control}
\left\|\left(\mathbf{A}+\mathbf{b}\boldsymbol{\gamma}^T\right)^k\right\|\leq c\;
\delta^k \;.
\end{equation}
Using that  $\mathbf{A}$, $\mathbf{b}$, $\boldsymbol{\gamma}$ all have
nonnegative entries, we have
\begin{equation}
  \label{eq:norm-control2}
\left\|\mathbf{A}^k\right\|\leq\left\|\left(\mathbf{A}+\mathbf{b}\boldsymbol{\gamma}^T\right)^k\right\|\;.
\end{equation}
Hence $(\mathbf{I}-\mathbf{A})^{-1}=\mathbf{I}+\sum_{k\ge1}\mathbf{A}^k$ is well defined and we set
$\mathbf{x}_\infty=\mathbf{(I-A)}^{-1}\boldsymbol{\omega}$ so that,
with~(\ref{eq:psi:nm-garch}), we have
$$
\f{\chunk{y}{0}{n-1}}(\mathbf{x})-\mathbf{x}_\infty=
\mathbf{A}^{n}\mathbf{x}+ \sum_{j\geq n}\mathbf{A}^j \boldsymbol{\omega} +
\sum_{j=0}^{n-1}
y_{n-1-j}^2\mathbf{A}^j \mathbf{b}\;.
$$
Then, using definition~(\ref{eq:def-Px}), we get that, $\bar\PP_{\mathbf{x}}$\as, for all $n\geq1$, 
\begin{align*}
  |\mathbf{X}_n-\mathbf{x}_\infty|&=\left|\f[]{\chunk{Y}0{n-1}}(\mathbf{x})-\mathbf{x}_\infty\right|\\
&\leq
\left|\mathbf{A}^n(\mathbf{x}-\mathbf{x}_\infty)\right|+ \sum_{j\geq n}\left|\mathbf{A}^j \boldsymbol{\omega}\right|+
\left(\max_{0\leq j\leq n-1}Y_j^2\right)\,\sum_{j=0}^{n-1}\left|\mathbf{A}^j\mathbf{b}\right|\;.
\end{align*}
With~(\ref{eq:norm-control}) and~(\ref{eq:norm-control2}), this implies
$$
\bar\PP_{\mathbf{x}}\left(|\mathbf{X}_n-\mathbf{x}_\infty|\leq c \left[\delta^n
    \left(|\mathbf{x}-\mathbf{x}_\infty|+\frac{|\boldsymbol{\omega}|}{1-\delta}\right)+\frac{|\mathbf{b}|}{1-\delta}\,\max_{0\leq
      j\leq n-1}Y_j^2\right]\right) = 1  \;.
$$
To obtain \ref{assum:reachable}, it is sufficient to observe that, since $g$
takes positive values in~(\ref{eq:g-theta:nm-garch}), for any
positive $\epsilon$, $\mathbf{x}\in\Xset$ and any $n\geq1$,
$$
\bar\PP_{\mathbf{x}}\left(\max_{0\leq j\leq n-1}Y_j^2<\epsilon\right)>0 \;.
$$
Next we prove~\ref{assum:bound:rho}. We have
$$
\psi_y(\mathbf{x})-\psi_y(\mathbf{x'})=\mathbf{A}(\mathbf{x}-\mathbf{x'}) \;.
$$
Since~(\ref{eq:norm-control}) and~(\ref{eq:norm-control2}) imply that
$\radius(\mathbf{A})<1$, there exists a vector norm which makes $\mathbf{A}$
strictly contracting. Choosing the metric $\Xmet$ on $\Xset$ as the one
derived from this norm, we get~\ref{assum:bound:rho}. To
show~\ref{assum:38--40:alpha-phi}, we again rely
on~\autoref{lem:det:alpha:gen}. Let us set $\Cset=(0,\infty)$ and
$\Sset=\{1,\ldots, d\}$ and define the probability measure $\mu$ on $\Sset$ by
$\mu(\{s\})=\gamma_s$, for all $s\in\Sset$. For all $(t,y)\in\Cset\times\Yset$,
let $j(t)=\frac{1}{(2\pi t)^{1/2}}$ and
$h(t;y)=\exp\left(-y^2/{2t}\right)$. Obviously, 
Relation~(\ref{eq:def-g-j-h-mu}) holds and $h$ 
satisfies ~\ref{item:F1}. Hence, ~\autoref{lem:det:alpha:gen} implies that
$\alpha$ and $\phi$ defined respectively for all $\mathbf{x}=(x_1, \dots,
x_d), \ \mathbf{x'}=(x_1', \dots, x_d')\in\Xset$ by
\begin{equation*}
\alpha(\mathbf{x,x'})=\min_{1\le\ell\le d}{\left\{\left(\frac{x_\ell\wedge x'_\ell}{x_\ell\vee x'_\ell}\right)^\frac{1}{2}\right\}}\in(0,1] \  \text{and}  \ \phi\mathbf{(x,x')}=(x_1\wedge x'_1,\ldots,x_d\wedge x'_d),
\end{equation*}
satisfy ~\ref{assum:38--40:alpha-phi}\ref{item:alpha:phi:3}.  For $\mathbf{x}=(x_1,
\dots, x_d), \ \mathbf{x'}=(x_1', \dots, x_d')\in\Xset$, we have
\begin{align*}
1-\alpha(\mathbf{x,x'}) &=1-\min_{1\le\ell\le d}{\left\{\left(1-\frac{|x_\ell-x'_\ell|}{x_\ell\vee x'_\ell}\right)^\frac{1}{2}\right\}}\\
&\leq \max_{1\le\ell\le d}{\left\{\frac{|x_\ell-x'_\ell|}{x_\ell\vee
      x'_\ell}\right\}}\\
&\leq  \min_{1\le\ell\le d}(x_\ell^{-1}\wedge x_\ell^{\prime-1})\;|\mathbf{x}-\mathbf{x}'|\\
& \leq W(\mathbf{x},\mathbf{x}')\;\Xmet(\mathbf{x},\mathbf{x}') \;,
\end{align*}
where $\Xmet$ is the metric previously defined and $W$ is defined by
$W(\mathbf{x},\mathbf{x}')=1\vee \left(c_{\Xmet} \, \min_{1\le\ell\le
    d}(x_\ell^{-1}\wedge x_\ell^{\prime-1})\right)$ with $c_{\Xmet}>0$ is
conveniently chosen (such a constant exists since $\Xmet$ is the metric
associated to a norm and $\Xset$ has finite dimension). Then
\ref{assum:38--40:alpha-phi}\ref{assum:definition-gamma-x} and
\ref{assum:38--40:alpha-phi}\ref{assum:hyp:beta:gene} hold and, since for all
$y\in\Yset$ and $x\in\Xset$, $\psi_y(\mathbf{x})$ has all its entries bounded
from below by the positive entries of $\boldsymbol{\omega}$,
$W(\psi_y(\mathbf{x}),\psi_y(\mathbf{x}'))$ is uniformly bounded over
$(\mathbf{x, x'}, y)\in\Xset\times\Xset\times\Yset$
and~\ref{assum:38--40:alpha-phi}\ref{assum:driftCond:W:new} holds. This
completes \textbf{Step 1}.


\noindent\textbf{Step 2} We now show that Assumptions  
  \ref{assum:continuity:Y:g-theta}, \ref{assum:continuity:Y:phi-theta} and \ref{assum:practical-cond:O} hold.

  Clearly, \ref{assum:continuity:Y:g-theta} and
  \ref{assum:continuity:Y:phi-theta} hold by definitions of
  $\psi^\theta_y(\mathbf{x})$ and $g^\theta(\mathbf{x};y)$. It remains to show
  ~\ref{assum:practical-cond:O}. Since $\Theta$ is compact, then
\begin{equation*}
\underline{\omega}\leq \min_{1\leq\ell\leq d}\boldsymbol{\omega}_\ell ,
\; |\boldsymbol{\omega}|\leq\overline{\omega}, \,
\underline{b} \leq |\mathbf{b}| \leq\bar b,\,
\radius(\mathbf{A}+\mathbf{b}\boldsymbol{\gamma}^T)\le\bar{\rho}, \;
\left\|\mathbf{A}+\mathbf{b}\boldsymbol{\gamma}^T\right\|\le L
\end{equation*}
for some $(\underline{\omega},\overline{\omega},\underline{b},\overline{b},\bar \rho) \in (0,\infty)^4\times(0,1)$ and $L>0$.
By \cite[Lemma~12]{moulines-priouret-roueff-2005}, we note that this implies
that, for all $\bar\delta\in(\bar\rho,1)$, there exists $\bar C>0$ such that for all
$k\geq1$ and all $\theta\in\Theta$,
\begin{equation}
  \label{eq:uniform-spectral-raius}
  \left\|\left(\mathbf{A}+\mathbf{b}\boldsymbol{\gamma}^T\right)^k\right\|\leq
  \bar C\,\bar\delta^k\;.
\end{equation}
We set
$\Xset_1=[\underline \omega,\infty)^d\subset\Xset$ so that
\ref{assum:practical-cond:O}\ref{assum:exist:practical-cond:subX}
holds. Moreover, for all
$(\theta,\mathbf{x},y)\in\Theta\times\Xset_1\times\Yset$,
$g^\theta(\mathbf{x};y)\le(2\pi\underline\omega)^{-1/2}$. Thus,
Condition~\ref{assum:practical-cond:O}\ref{assum:momentCond} holds.  Now let
$x_1\in\Xset$.
Using~\eqref{eq:psi:nm-garch:iterate},~(\ref{eq:uniform-spectral-raius})
and~(\ref{eq:norm-control2}), we have, for all $\mathbf{x}\in\Xset$, $y_{1:n}\in\Yset^n$ and $\theta\in\Theta$,
\begin{align*}
\left|\f{\chunk{y}1n}(\mathbf{x}_1)-\f{\chunk{y}1n}(\mathbf{x})\right|&= \left|\mathbf{A}^n(\mathbf{x}_1-\mathbf{x})\right|\\
&\leq \bar C\,\bar\delta^n\left|\mathbf{x}_1-\mathbf{x}\right|\eqsp.
  \end{align*}
Using that the norm defining $\Xmet$ is equivalent to the norm $|\cdot|$, we get~\ref{assum:practical-cond:O}\ref{assum:exist:practical-cond:O} with
$$
\bar\psi(\mathbf{x})= \bar C'\,\left|\mathbf{x}_1-\mathbf{x}\right|\;,
$$
for some positive constant $\bar
C'$. Hence~\ref{assum:practical-cond:O}\ref{item:psibar} holds and since
$$
\left|\mathbf{x}_1-\psi^\theta_y(\mathbf{x}_1)\right|\leq (L+1)\,|\mathbf{x}_1|+\overline{\omega}+y^2\bar b\;,
$$
we also get~\ref{assum:practical-cond:O}\ref{item:barphi:bar:phi} provided that
\begin{equation}
  \label{eq:phibar-nmgarch-temp}
\bar\phi(y)\geq (L+1)\,|\mathbf{x}_1|+\overline{\omega}+y^2\bar b \;.
\end{equation}
It is straightforward to show that,
 for  all $\theta\in\Theta$, $\mathbf{x}\in\Xset_1$, $y\in\rset$, and $\ell\in\{1,\ldots,d\}$,
\begin{align*}
\left|\frac{\partial\ln g^\theta}{\partial x_\ell}(\mathbf{x};y)\right|&\le\frac{1}{2}\left(\frac{y^2}{\underline{\omega}^2}+\frac{1}{\underline{\omega}}\right)\eqsp.
\end{align*}
Thus, by the mean value theorem,  for all $\theta\in\Theta$, $(\mathbf{x},\mathbf{x}')\in\Xset_1\times\Xset_1$ and $y\in\Yset$,
\begin{align*}
\left|\ln g^\theta(\mathbf{x};y)-\ln g^\theta(\mathbf{x}';y)\right|&\le \frac{1}{2}\left(\frac{y^2}{\underline{\omega}^2}+\frac{1}{\underline{\omega}}\right)\,|\mathbf{x}-\mathbf{x}'|\eqsp.
\end{align*}
We thus obtain~\ref{assum:practical-cond:O}\ref{item:barphi:bar:phi},
\ref{assum:practical-cond:O}\ref{item:ineq:loggg}
and~\ref{assum:practical-cond:O}\ref{item:H} by setting $C=0$,

$$
H(u)=\sup_{d(\mathbf{x},\mathbf{x}')\leq u}|\mathbf{x}-\mathbf{x}'|\;,
$$
and
$$
\bar\phi(y)=(L+1)\,|\mathbf{x}_1|+\overline{\omega}+1/(2\underline{\omega})+y^2(\bar b+\underline{\omega}^2)\;.
$$
In addition, for all $\theta\in\Theta$ and $\mathbf{x}\in\Xset$, we have
$$
\int y^2 G^\theta(x,\rmd y) = \boldsymbol{\gamma}^T\mathbf{x} \;.
$$
Hence, using~(\ref{eq:Vbar-nmgarch}) with the above definitions, we
obtain~\ref{assum:practical-cond:O}\ref{item:22or23-barphi-V} and the proof is concluded.
\end{proof}


\subsection{The Threshold INGARCH model}

The threshold INGARCH$(1,1)$ in \autoref{example:tingarch} is a specific case of \autoref{def:obs-driv:gen} where $\nu$ is the counting measure on $\Yset=\zsetp$,
\begin{align}
&\psi^\theta_y(x)=\omega+ax+by\eqsp,\label{eq:tingarch:def:phi}\\
&g^\theta(x;y)
=\rme^{-(x\wedge \tau)} \frac{(x\wedge \tau)^y}{y!}\eqsp,\label{eq:tingarch:g}
\end{align}
with $\theta=(\omega, a, b, \tau)$ in a compact subset $\Theta$ of
$(0,\infty)^4$ and $\Xset=(0,\infty)$. In this model, if $a<1$, we then have
the ergodicity and consistency results as stated in
\autoref{theo:ergo-converge:tingarch} below.

\begin{theorem}\label{theo:ergo-converge:tingarch}
  Suppose that all $\theta=(\omega, a, b, \tau)$ in $\Theta$ satisfy
  $a<1$. Then
  Assumptions~\ref{assum:gen:identif:unique:pi}, \ref{ass:21-lyapunov},
  \ref{assum:continuity:Y:g-theta}, \ref{assum:continuity:Y:phi-theta} and \ref{assum:practical-cond:O} hold
  with $\bar V$ being defined as the identity function on $\Xset$ and with any $x_1\in\Xset$.
\end{theorem}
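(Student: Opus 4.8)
The plan is to follow, essentially verbatim, the two-step scheme used for \autoref{theo:ergo-converge:nbigarch}: the update map $\psi^\theta_y(x)=\omega+ax+by$ is here \emph{identical} to the one of the NBIN-GARCH model, and only the conditional density changes, from a negative binomial mass to a Poisson mass with the thresholded mean $x\wedge\tau$. Throughout I would take $V=\bar V$ equal to the identity on $\Xset=(0,\infty)$, as in that proof.

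\textbf{Step 1 (ergodicity).} I would verify the hypotheses of \autoref{thm:ergodicity}. Conditions \ref{ass-ergo-Xset}, \ref{assum:weakFeller} (weak Fellerness, from the continuity of $x\mapsto x\wedge\tau$ and the weak continuity of the Poisson law in its parameter), \ref{assum:reachable} (reachable point $x_\infty=\omega/(1-a)$, attained along the deterministic recursion $x\mapsto\omega+ax$ forced by the positive-probability event $\{Y_0=\dots=Y_{n-1}=0\}$) and \ref{assum:bound:rho} (since $|\psi^\theta_y(x)-\psi^\theta_y(x')|=a|x-x'|$ with $\sup_\Theta a<1$) are copied from the NBIN-GARCH argument. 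The one computation that changes is the drift: since $G^\theta(x;\cdot)$ is the Poisson law with mean $x\wedge\tau$, one gets $RV(x)=\omega+ax+b(x\wedge\tau)\le a\,V(x)+(\omega+b\tau)$, which yields \ref{assum:weakFeller-V} because $a<1$ on the compact $\Theta$. For \ref{assum:38--40:alpha-phi} I would apply \autoref{lem:det:alpha:gen} with $\Cset=(0,\infty)$, $\Sset=\{1\}$, $\mu=\delta_1$, $j(t)=\rme^{-(t\wedge\tau)}$ and $h(t;y)=(t\wedge\tau)^y/y!$; then $y\mapsto j(t)h(t;y)$ is exactly the Poisson$(t\wedge\tau)$ density and $t\mapsto h(t;y)$ is non-decreasing, so \ref{item:F1} applies and the lemma produces $\alpha(x,x')=\rme^{-|(x\wedge\tau)-(x'\wedge\tau)|}\in(0,1]$ and $\phi(x,x')=x\wedge x'$, fulfilling \ref{assum:38--40:alpha-phi}\ref{item:alpha:phi:3}. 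Taking $W\equiv1$ makes \ref{assum:38--40:alpha-phi}\ref{assum:definition-gamma-x} and \ref{assum:38--40:alpha-phi}\ref{assum:driftCond:W:new} trivial, while \ref{assum:38--40:alpha-phi}\ref{assum:hyp:beta:gene} follows from $1-\rme^{-u}\le u$ and the $1$-Lipschitzness of $t\mapsto t\wedge\tau$. \autoref{thm:ergodicity} then gives \ref{assum:gen:identif:unique:pi}, and since $\bar V=V\lesssim V$, also \ref{ass:21-lyapunov}.

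\textbf{Step 2 (likelihood conditions).} Conditions \ref{assum:continuity:Y:g-theta} and \ref{assum:continuity:Y:phi-theta} are immediate from the explicit forms of $g^\theta$ and $\psi^\theta$. For \ref{assum:practical-cond:O} I would take the closed set $\Xset_1=[\underline\omega,\infty)$, with $\underline\omega>0$ a lower bound for $\omega$ over the compact $\Theta$; then \ref{assum:practical-cond:O}\ref{assum:exist:practical-cond:subX} holds because $\psi^\theta_y(x)\ge\omega$, and \ref{assum:practical-cond:O}\ref{assum:momentCond} holds because a Poisson mass never exceeds $1$. The closed-form iterate~\eqref{eq:recurs:nbingarch} gives $|\f{\chunk{y}1n}(x_1)-\f{\chunk{y}1n}(x)|=a^n|x_1-x|$, hence \ref{assum:practical-cond:O}\ref{assum:exist:practical-cond:O} and \ref{assum:practical-cond:O}\ref{item:psibar} with $\varrho=\sup_\Theta a<1$ and $\bar\psi(x)=|x_1-x|$. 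On $\Xset_1$ one has $x\wedge\tau\ge\underline\omega\wedge\underline\tau=:\underline{c}>0$, hence, by the mean value theorem,
\begin{align*}
\Bigl|\ln\frac{g^\theta(x;y)}{g^\theta(x';y)}\Bigr|
&\le\bigl|(x\wedge\tau)-(x'\wedge\tau)\bigr|+y\,\bigl|\ln(x\wedge\tau)-\ln(x'\wedge\tau)\bigr|\\
&\le\bigl(1+y/\underline{c}\bigr)\,|x-x'|\eqsp,
\end{align*}
which gives \ref{assum:practical-cond:O}\ref{item:barphi:bar:phi}, \ref{assum:practical-cond:O}\ref{item:ineq:loggg} and \ref{assum:practical-cond:O}\ref{item:H} with $C=0$, $H(u)=u$ and $\bar\phi(y)=c_0+c_1y$ for suitable constants $c_0,c_1$ (also dominating $|x_1-\psi^\theta_y(x_1)|\le\omega+(1+a)x_1+by$). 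Finally $\int\lnp y\,G^\theta(x;\rmd y)\le\int y\,G^\theta(x;\rmd y)=x\wedge\tau\le\bar V(x)$ yields \ref{assum:practical-cond:O}\ref{item:22or23-barphi-V}.

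\textbf{Expected obstacle.} Conceptually there is nothing hard here: the statement is morally a corollary of the NBIN-GARCH analysis, with the negative binomial mean $rx$ replaced by the bounded Poisson mean $x\wedge\tau$ (which, being $\le x$, only makes the drift and moment bounds easier). The one point requiring genuine care is the bookkeeping around the threshold — keeping the conditional mean as $x\wedge\tau$ in the drift inequality, and above all ensuring that $x\wedge\tau$ stays bounded away from $0$ in the Lipschitz estimate for $\ln g^\theta$, which is precisely why $\Xset_1$ is chosen to be $[\underline\omega,\infty)$ with $\underline\omega>0$ and why $\tau\ge\underline\tau>0$ is used. Everything else is a transcription of the earlier proof.
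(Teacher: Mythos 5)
Your proposal is correct and follows essentially the same two-step argument as the paper: the same drift $RV(x)=\omega+ax+b(x\wedge\tau)$, the same application of \autoref{lem:det:alpha:gen} with $j(t)=\rme^{-(t\wedge\tau)}$ and $h(t;y)=(t\wedge\tau)^y/y!$ (your $\alpha(x,x')=\rme^{-|(x\wedge\tau)-(x'\wedge\tau)|}$ coincides with the paper's expression), $W\equiv1$, and the same choices $\Xset_1=[\underline\omega,\infty)$, $\varrho=\bar\alpha$, $C=0$, $H(u)=u$ and affine $\bar\phi$ for the likelihood conditions. No gaps; the threshold bookkeeping you flag ($x\wedge\tau\ge\underline\omega\wedge\underline\tau$ on $\Xset_1$) is exactly the point the paper's proof also relies on.
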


\begin{proof} As in the proofs of the two theorems above, for convenience, we
  divide the proof into two steps.

\noindent\textbf{Step 1.} We first prove
Assumptions~\ref{assum:gen:identif:unique:pi} and \ref{ass:21-lyapunov} by
applying \autoref{thm:ergodicity}. We set $\bar V(x)= V(x)=x$ and thus we only need to
check~~\ref{ass-ergo-Xset}, \ref{assum:weakFeller-V}, \ref{assum:weakFeller}, \ref{assum:reachable},
\ref{assum:bound:rho} and~\ref{assum:38--40:alpha-phi}. Condition~\ref{ass-ergo-Xset} holds with the usual metric on $\rset$. We have for all $\theta\in\Theta$,
$$
R V(x)=\omega + ax+b(x\wedge\tau) \le aV(x) + (\omega+b\tau),
$$
which yields  \ref{assum:weakFeller-V}.
The fact that the kernel $R$ is weak Feller easily
follows by observing that, as $x\to x'$, $\mathcal{P}(x)$ converges weakly
to $\mathcal{P}(x')$ and the map $x\mapsto x\wedge\tau$ is continuous, so \ref{assum:weakFeller} holds.

The proof of \ref{assum:reachable} is similar to the NBIN-GARCH case of
\autoref{theo:ergo-converge:nbigarch} and is thus omitted.
Assumption~\ref{assum:bound:rho} holds since we have for all
$(x,x',y)\in\Xset^2\times\Yset$ with $x\neq
x'$, $$\frac{|\psi_y(x)-\psi_y(x')|}{|x-x'|}=a<1\eqsp.
$$
To prove~\ref{assum:38--40:alpha-phi}, we apply ~\autoref{lem:det:alpha:gen}
with $\Cset=\Xset$, $\Sset=\{1\}$ (so $\mu$ boils down to the Dirac measure on $\{1\}$). For all
$(x,y)\in\Xset\times\Yset$, let $j(x)=\rme^{-(x\wedge\tau)}$ and
$h(x;y)=\frac{(x\wedge\tau)^y}{y!}$. Then $h$ indeed satisfies ~\ref{item:F1}. Thus
by~\autoref{lem:det:alpha:gen}, for all $(x,x')\in\Xset^2$ and $y\in\Yset$, we
get that
\begin{equation*}
\alpha(x, x')=\rme^{-(x\vee x')\wedge\tau+(x\wedge x')\wedge\tau}\in(0,1] \quad \text{and} \quad \phi(x,x')=x\wedge x'
\end{equation*}
satisfy ~\ref{assum:38--40:alpha-phi}\ref{item:alpha:phi:3}.

Let  $W(x,x')=1$ for all $(x,x')\in\Xset^2$, which is a constant function. Thus \ref{assum:38--40:alpha-phi}\ref{assum:definition-gamma-x} and \ref{assum:38--40:alpha-phi}\ref{assum:driftCond:W:new}  clearly hold. Moreover, \ref{assum:38--40:alpha-phi}\ref{assum:hyp:beta:gene} holds since
for all $(x, x') \in\Xset^2$, we have that
\begin{equation*}
1-\alpha(x,x')\leq x\vee x'-x\wedge x'=\lvert x-x'\rvert = W(x,x')\lvert x-x'\rvert\eqsp.
\end{equation*}
Therefore, \ref{assum:38--40:alpha-phi} holds, which completes \textbf{Step 1}.

\noindent\textbf{Step 2}. We now prove  
  \ref{assum:continuity:Y:g-theta}, \ref{assum:continuity:Y:phi-theta} and \ref{assum:practical-cond:O}.
By assumption on $\Theta$, then there exists $(\underline{\omega},\bar{\omega},\underline{b}, \bar b,\underline{\tau}, \bar \tau,\underline{\alpha}, \bar \alpha) \in (0,\infty)^6\times(0,1)^2$ such that
$$
\underline{\omega}\leq \omega \leq\bar \omega, \,\, \leq b \leq\bar{b}, \,\, \underline{\tau}\leq \tau\leq \bar{r}, \,\, \underline{\alpha}\leq a\leq \bar{\alpha}\eqsp.
$$
Clearly, \ref{assum:continuity:Y:g-theta} and
\ref{assum:continuity:Y:phi-theta} hold by definitions of $\psi^\theta_y(x)$
and $g^\theta(x;y)$.  It remains to check~\ref{assum:practical-cond:O} for a
well-chosen closed subset $\Xset_1$ and any $x_1\in\Xset$. Let
$\Xset_1=[\underline \omega,\infty)\subset\Xset$ so that
\ref{assum:practical-cond:O}\ref{assum:exist:practical-cond:subX} holds. By
noting that for all $(\theta,x,y)\in\Theta\times\Xset\times\Yset$,
$g^\theta(x;y)\le1$, we have
\ref{assum:practical-cond:O}\ref{assum:momentCond}.
From~\eqref{eq:notationItere:f:cl} and ~\eqref{eq:tingarch:def:phi}, we have
for all $s\leq t$, $y_{s:t}\in\Yset^{t-s+1}$, $x\in\Xset$ and
$\theta\in\Theta$,
\begin{equation}\label{eq:recurs:tingarch}
\f{\chunk{y}{s}{t}}(x)= \omega\left(\frac{1-a^{t-s+1}}{1-a}\right)+a^{t-s+1}x+b\sum_{j=0}^{t-s} a^jy_{t-j} \eqsp.
\end{equation}
Using~\eqref{eq:recurs:tingarch}, we have, for all $\theta\in\Theta$, $x\in\Xset$ and $y_{1:n}\in\Yset^n$,
\begin{equation*}
\left|\f{\chunk{y}1n}(x_1)-\f{\chunk{y}1n}(x)\right|= a^n\left|x_1-x\right|\le\bar{\alpha}^n\left|x_1-x\right|\eqsp.
  \end{equation*}
  This gives~\ref{assum:practical-cond:O}\ref{assum:exist:practical-cond:O}
  and~\ref{assum:practical-cond:O}\ref{item:psibar} by setting
  $\varrho=\bar{\alpha}<1$ and $\bar\psi(x)=|x_1-x|$.  Next we set
  $\bar\phi$, $H$ and $C$
  to meet Conditions~\ref{assum:practical-cond:O}\ref{item:barphi:bar:phi}
  and~\ref{assum:practical-cond:O}\ref{item:ineq:loggg} and~\ref{assum:practical-cond:O}\ref{item:H}. Let us write, for all
  $\theta\in\Theta$ and $y\in\Yset$,
$$
\left|x_1-\psi^\theta_y(x_1)\right|\le\omega+(1+a)x_1+by\le\bar\omega+(1+\bar{\alpha})x_1+\bar{b}y
$$
and, for all $(x,x')\in\Xset_1^2=[\underline \omega,\infty)^2$,
\begin{align*}
\left|\ln g^\theta (x;y)-\ln g^\theta (x';y)\right|
&=\left|\left(x'\wedge\tau-x\wedge\tau\right)+y\left(\ln(x\wedge\tau)-\ln(x'\wedge\tau)\right)\right|\\
&\leq \left(1+(\underline{\omega}\wedge\underline{\tau})^{-1}y\right)\;|x-x'| \;.
\end{align*}
Setting  $\bar\phi(y)=1+\bar\omega+(1+\bar{\alpha})x_1+\left(\bar{b}\vee(\underline{\omega}\wedge\underline{\tau})^{-1}\right)y$,
$H(x)=x$ and $C=0$ then yield Conditions~\ref{assum:practical-cond:O}\ref{item:barphi:bar:phi}, ~\ref{assum:practical-cond:O}\ref{item:ineq:loggg}
  and~\ref{assum:practical-cond:O}\ref{item:H}. Now
  \ref{assum:practical-cond:O}\ref{item:22or23-barphi-V} follows from
$$
\int \lnp y \; G^\theta(x,\rmd y) \leq \int y \; G^\theta(x,\rmd y) = x\wedge\tau\leq
\bar V(x)\;.
$$
This concludes the proof.
\end{proof}

\section{Numerical experiments}
\label{sec:numer-exper}
\subsection{Numerical procedure}\label{sec:deriv:cal}
In this part we provide a numerical method for computing the (conditional) MLE
$\mlY{x,n}$ for the parameter $\theta=(\omega, a, b, r)$ in the
NBIN-GARCH$(1,1)$ model introduced in \autoref{example:Nbigarch:defi} and
studied in \autoref{sec:nbin-garch-model}. It is convenient to write
$\theta=(\vartheta, r)$ with $\vartheta=(\omega, a, b)$ and then to write
$\psi^{\vartheta}_y(x)$ and $g^r(x;y)$ instead of $\psi^{\theta}_y(x)$ and
$g^\theta(x;y)$ in~(\ref{eq:Nbingarch:def:phi}) and~(\ref{eq:Nbingarch:g}),
respectively. In contrast to the approach used in \cite{zhu:2011}, we allow the
component $r$ to be any positive real number, rather than a discrete one and to
be unknown as well. We thus maximize jointly with respect to the parameters
$\vartheta$ and $r$ the log-likelihood function
$\lkdM[x,n]{\chunk{y}{1}{n}}[\theta]=\lkdM[x,n]{\chunk{y}{1}{n}}[(\vartheta,
r)]$. In practice, one does not rely on a compact set $\Theta$ of parameters
as in~\autoref{theo:ergo-converge:nbigarch}. Instead the maximization is
performed over all parameters $\omega>0$, $a>0$, $b>0$, $r>0$ such that  the stability
constraint $a+br<1$ holds (taken from~(\ref{eq:nbingarch-stability-cond-zhu})).
We use the constrained nonlinear optimization function \texttt{auglag}
(Augmented Lagrangian Minimization Algorithm) from the package \texttt{alabama}
(Augmented Lagrangian Adaptive Barrier Minimization Algorithm) in \texttt{R}. For this purpose we provide an initial parameter
point and a numerical computation of the normalized log-likelihood function
$\lkdM[x,n]{\chunk{y}{1}{n}}[\theta]$ and of its gradient. The initial point is
obtained  by applying a conditional least
square (CLS) estimation based on an ARMA$(1,1)$ representation of the model, see \cite[Section~3]{zhu:2011}. The
computation of the log-likelihood and of its
derivatives are derived as follows. For all $x\in\Xset$, denoting
$u_k^\vartheta=\f[\vartheta]{\chunk{y}{1}{k-1}} (x)$ for all $k\geq2$ and
$u_1^\vartheta=x$, we have
\begin{align*}
\lkdM[x,n]{\chunk{y}{1}{n}}[(\vartheta, r)]&= n^{-1} \sum_{k=1}^{n} \ln g^r\left(\f[\vartheta]{\chunk{y}{1}{k-1}} (x);y_k\right) \\
&=n^{-1}\ln g^r(x,y_1)+ n^{-1}\sum_{k=2}^{n} \ln g^r\left(u_k^\vartheta;y_k\right) \eqsp.
\end{align*}
The computation of $u_k^\vartheta$ for all $k\geq2$ is done iteratively by
observing that $u_k^\vartheta=\f[\vartheta]{y_{k-1}}(u_{k-1}^\vartheta)$ and the
computation of $\lkdM[x,n]{\chunk{y}{1}{n}}[(\vartheta, r)]$ is deduced.
The computation of the derivatives with respect to parameter $\theta=(\vartheta, r)$ of the
function $\lkdM[x,n]{\chunk{y}{1}{n}}[(\vartheta, r)]$ are then obtained in two
steps. First, for $k\geq2$, the derivative of $u_k^\vartheta$ with respect to $\vartheta$ are
obtained iteratively by ${\partial{u_{1}^\vartheta}}/{\partial{\vartheta}}=0$ and
\begin{equation*}
\frac{\partial{u_k^\vartheta}}{\partial{\vartheta}}=
(1, u_{k-1}^\vartheta, Y_{k-1})+a\frac{\partial{u_{k-1}^\vartheta}}{\partial{\vartheta}} \;.
\end{equation*}
Then the derivatives of $\lkdM[x,n]{\chunk{y}{1}{n}}[(\vartheta, r)]$ with respect to $\vartheta$ and $r$ are given by
\begin{equation*}
\frac{\partial{\lkdM[x,n]{}[(\vartheta,r)]}}{\partial{\vartheta}}=n^{-1}\sum_{k=1}^n\frac{\partial{\ln g^r}}{\partial{x}}\left(u_k^\vartheta;y_k\right)\frac{\partial{u_k^\vartheta}}{\partial{\vartheta}}=n^{-1}\sum_{k=2}^n\left(\frac{y_k}{u_k^\vartheta}-\frac{y_k+r}{1+u_k^\vartheta}\right)\frac{\partial{u_k^\vartheta}}{\partial{\vartheta}}
\end{equation*}
and
\begin{equation*}
\begin{split}
\frac{\partial{\lkdM[x,n]{}[(\vartheta,r)]}}{\partial{r}}&=n^{-1}\sum_{k=1}^n\frac{\partial{\ln g^r}}{\partial{r}}\left(u_k^\vartheta;y_k\right)\\
&=n^{-1}\sum_{k=1}^n\left(\Gamma_2(r+y_k)-\ln(1+u_k^\vartheta)\right)-\Gamma_2(r)\eqsp,
\end{split}
\end{equation*}
respectively, where $\Gamma_2$ is the digamma function $\Gamma_2(r)=\frac{\rmd}{\rmd{r}}{\ln\circ\Gamma(r)}$, $r>0$.

%

\subsection{Simulation study}
We consider two NBIN-GARCH$(1,1)$ models with parameters:
\begin{enumerate}
\item[(M.1)] $\thv=(\omega_\star, a_\star, b_\star, r_\star)=(3,\,.2,\,.2,\,2)$ and
\item[(M.2)] $\thv=(\omega_\star, a_\star, b_\star, r_\star)=(3,\,.35,\,.1,\,1.5)$.
\end{enumerate}
We simulated $m=200$ data sets for each sample size $n= 2^7, 2^8, 2^9$ and
$2^{10}$.  In \autoref{pic-difflog}, we display the obtained boxplots of the
difference of the normalized log-likelihood function evaluated respectively at
MLE and at the true value $\thv$. As predicted by the theory, this difference
appears to converge to $0$ as the number of observations $n\to\infty$. For the
NBIN-GARCH$(1,1)$ model, it can be shown that $\Theta_\star=\{\thv\}$, which
implies the convergence of the MLE to the true parameter. We can observe this
behavior for each component of the MLE for the two models in
\autoref{pic-mle:M1} and \autoref{pic-mle:M2}. We also report the Monte Carlo mean along with the mean
absolute deviation error (MADE):
$\text{MADE}=m^{-1}\sum_{j=1}^m|\hat\theta^j_{x,n}-\thv^j|$ as an evaluation
criterion for the estimated parameter in
\autoref{table:est}.


\begin{table}[H]
\caption{\label{table:est}Mean of estimates, MADEs (within parentheses) for the NBIN-GARCH$(1,1)$ models}
\begin{center}
\smallskip\noindent
\resizebox{\linewidth}{!}{
\begin{tabular}{r r r r r r l}
\cline{1-6}					
 &  & \multicolumn{4}{c}{Sample size $n$} \\ \cline{3-6}
Model & Parameter & $n=2^7$ & $n=2^8$ & $n=2^9$ & $n=2^{10}$ \\ \cline{1-6}
\multicolumn{1}{c}{\multirow{4}{*}{(M.1)} } &
\multicolumn{1}{c}{$\hat{\omega}$} & 3.311(.973) & 3.212(.719) & 3.108(.507) & 3.062(.372)      \\ 
\multicolumn{1}{c}{}                        &
\multicolumn{1}{c}{$\hat{a}$} & .165(.138) & .173(.113) & .187(.076) & .193(.055)     \\ 
\multicolumn{1}{c}{}                        &
\multicolumn{1}{c}{$\hat{b}$} & .194(.049) & .195(.034) & .197(.025) & .200(.018)      \\ 
\multicolumn{1}{c}{}                        &
\multicolumn{1}{c}{$\hat{r}$} & 2.045(.241) & 2.035(.166) & 2.020(.112) & 2.011(.074)      \\ \cline{1-6}

\multicolumn{1}{c}{\multirow{4}{*}{(M.2)} } &
\multicolumn{1}{c}{$\hat{\omega}$} & 3.525(1.325) & 3.362(1.258) & 3.326(1.041) & 3.167(.761)   \\ 
\multicolumn{1}{c}{}                        &
\multicolumn{1}{c}{$\hat{a}$} & .252(.227) & .290(.213) & .296(.170) & .319(.136)  \\ 
\multicolumn{1}{c}{}                        &
\multicolumn{1}{c}{$\hat{b}$} & .092(.056) & .097(.039) & .098(.028) & .100(.022)   \\ 
\multicolumn{1}{c}{}                        &
\multicolumn{1}{c}{$\hat{r}$} & 1.563(.175) & 1.539(.129) & 1.520(.093) & 1.513(.066)   \\ \cline{1-6}

\end{tabular}
}
\end{center}
\end{table}

\begin{figure}[H]
\centering
\includegraphics[width=\textwidth,scale=1,center]{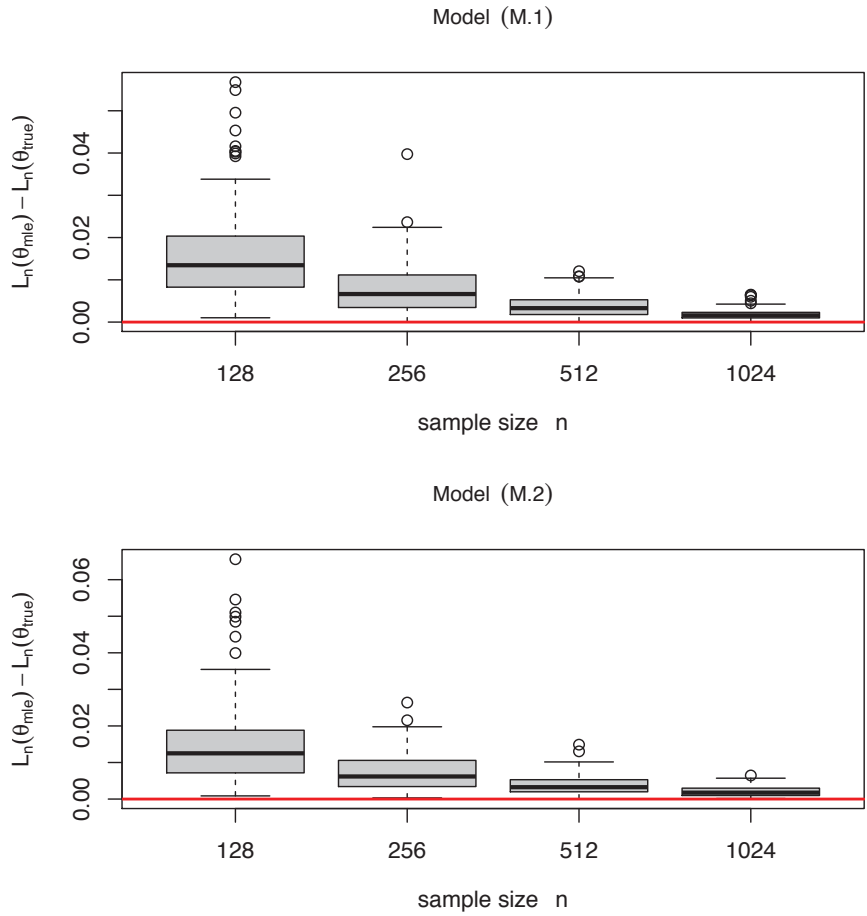}
\caption[BehaviorDifflogM1:M2]{Boxplots of the differences of log-likelihood functions evaluated at the estimated MLE and the true value for Models (M.1) and (M.2) with sample sizes $n=2^7, 2^8, 2^9$ and $n=2^{10}$, respectively. The red ``continuous" line indicates the position of zero.}
\label{pic-difflog}
\end{figure}

\begin{figure}[H]
\centering
\includegraphics[scale=1,center]{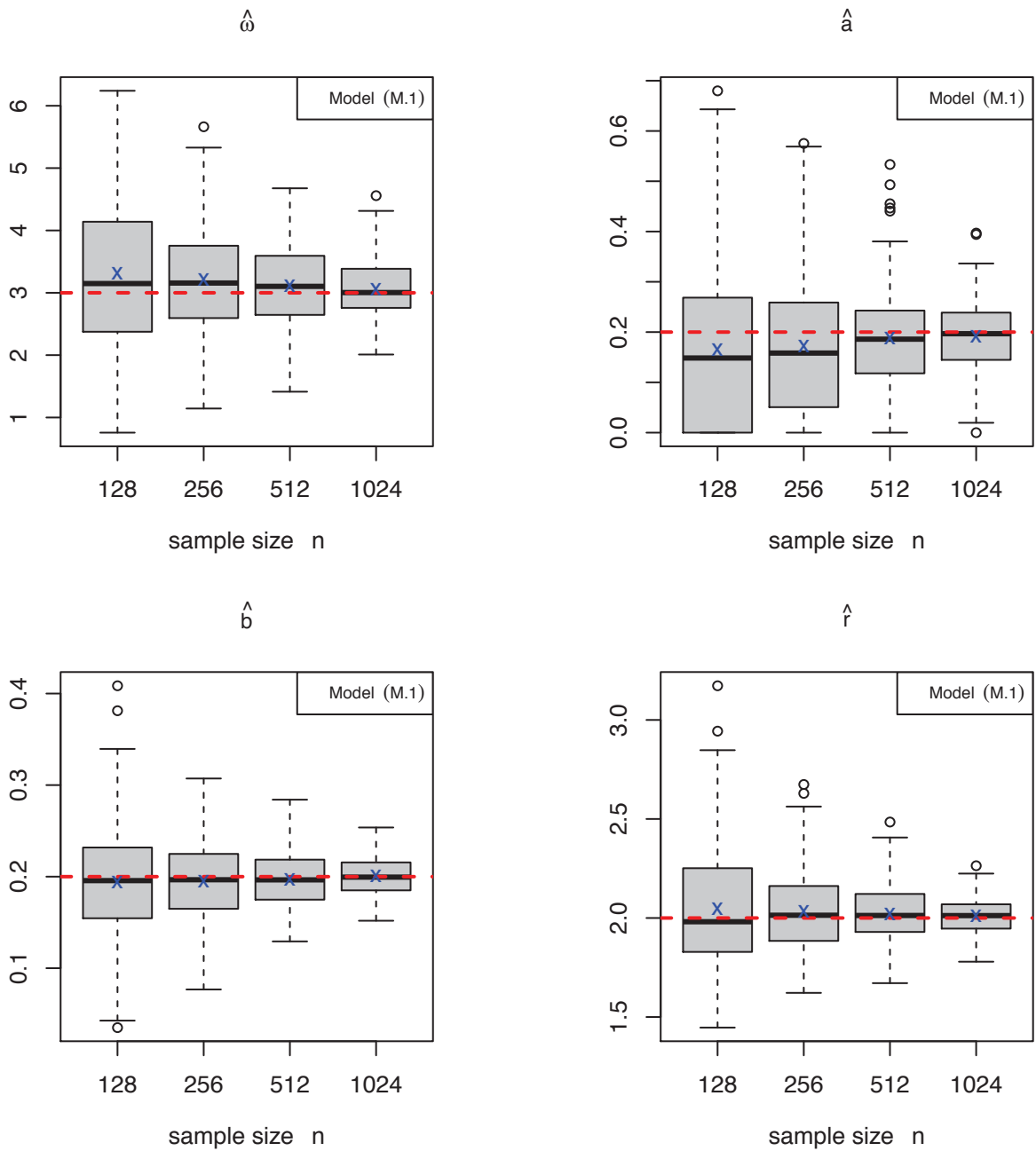}
\caption[BhaviorMLE:M1]{Boxplots of the estimated MLE for Model (M.1) with
  sample sizes $n=2^7, 2^8, 2^9$ and $n=2^{10}$, respectively. The red
  ``dashed'' line indicates the true value of the parameter and the blue
  ``\textsf{x}'' indicates the location of the Monte Carlo mean of the MLE.\label{pic-mle:M1}}
\end{figure}

\begin{figure}[H]
\centering
\includegraphics[scale=1,center]{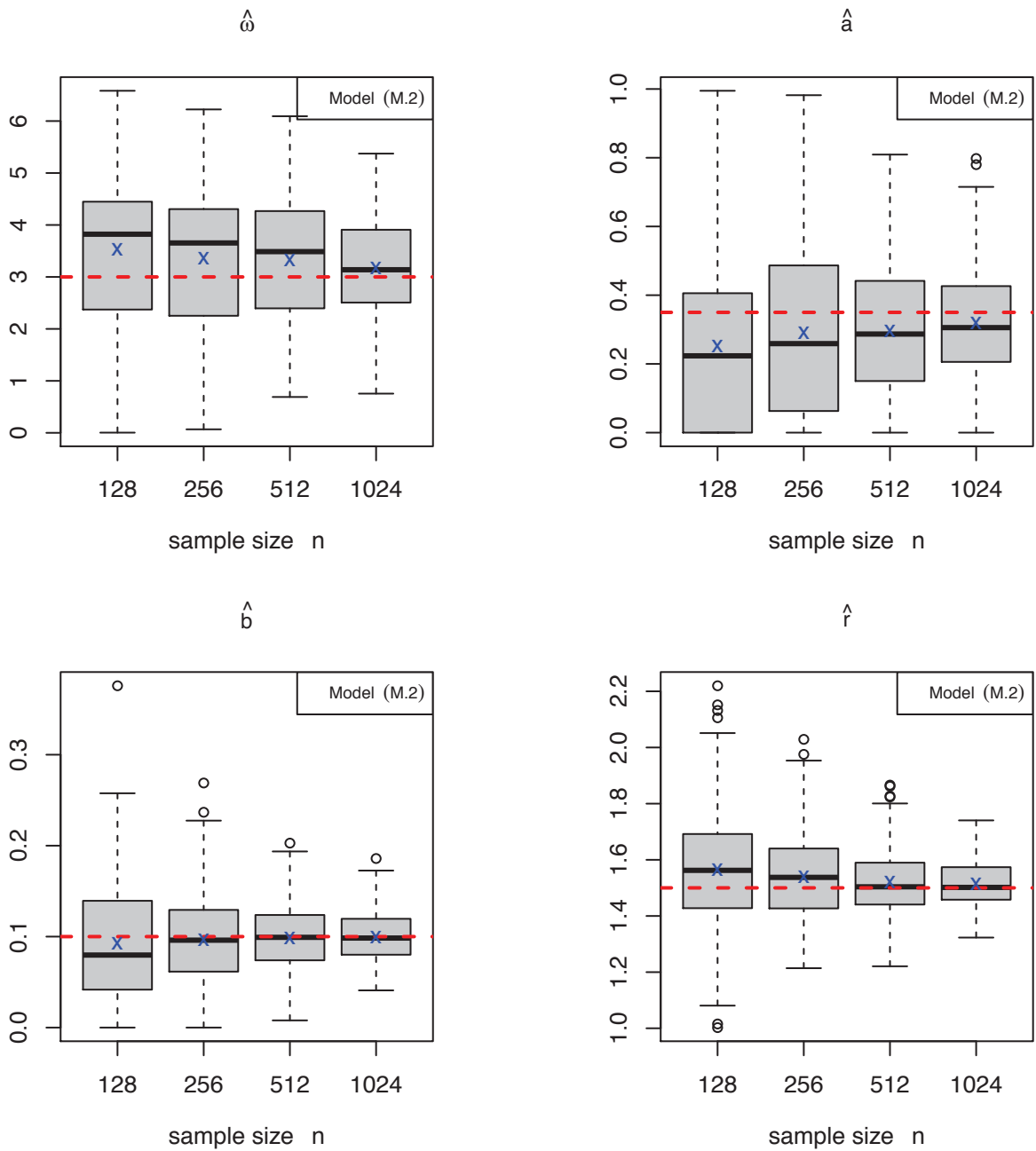}
\caption[BhaviorMLE:M2]{Same as \autoref{pic-mle:M1} but for Model (M.2).\label{pic-mle:M2}}
\end{figure}

\section{Postponed proofs}\label{sec:proofs}
\subsection{Convergence of the MLE}
\label{sec:convergence-mle}

Assumptions~\ref{assum:gen:identif:unique:pi} and~\ref{ass:21-lyapunov} are
supposed to hold throughout this section.
The proof of \autoref{thm:convergence-main} relies on the approach introduced
in \cite{pfanzagl:1969}, which was already used in \cite{dou:kou:mou:2013} for
a restricted class of observation-driven models.
Our main contribution here is
to provide the handy conditions listed in
Assumption~\ref{assum:practical-cond:O}. We first show that our conditions
imply~\ref{assum:Ptheta:thetastar:is:density} and the following one.
\begin{hyp}{B}
\item \label{assum:limit:Y} There exists $x_1\in\Xset$ such that, for all
  $\theta,\thv\in\Theta$, $p^{\theta}(Y_1\,|\,\chunk{Y}{-\infty}{0})$
  defined as in~(\ref{eq:def-p-theta-neq-thv}) with $x=x_1$ is finite $\tilde\PP^{\thv}\as$ Moreover, for all $\thv\in\Theta$, we
  have
\begin{align}\label{eq:unfi-limit-contrast-od}
\lim_{k \to \infty} \sup_{\theta \in \Theta} \left|\ln \frac{g^\theta (\f{\chunk{Y}{1}{k-1}}(x_1);Y_k)}{p^{\theta}(Y_k\,|\,\chunk{Y}{-\infty}{k-1})}\right| =0\quad\tilde\PP^{\thv}\as
\end{align}
\end{hyp}
Indeed we have the following lemma.
\begin{lemma}\label{lem:exist:practical-cond:O}
Assumptions \ref{assum:continuity:Y:g-theta}, \ref{assum:continuity:Y:phi-theta} and~\ref{assum:practical-cond:O}
imply~\ref{assum:limit:Y} and~\ref{assum:Ptheta:thetastar:is:density}.
\end{lemma}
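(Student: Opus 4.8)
The plan is to take $x_1$ as in~\ref{assum:practical-cond:O} and argue in three stages: first, that the iterates $\f{\chunk{y}{-m}{0}}(x_1)$ converge in $\Xset$ as $m\to\infty$, which gives the finiteness of $p^\theta$ and~\ref{assum:Ptheta:thetastar:is:density}\ref{item:PthetaNEQthetastar:is:density}; second, a uniform-in-$\theta$ version of this convergence, yielding~\ref{assum:limit:Y}; third, an identification of the limit with the true hidden variable, yielding~\ref{assum:Ptheta:thetastar:is:density}\ref{item:PthetaISthetastar:is:density}.

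\emph{Convergence of the iterates.} From $\f{\chunk{y}{-(m+1)}{0}}(x_1)=\f{\chunk{y}{-m}{0}}\!\left(\psi^\theta_{y_{-m-1}}(x_1)\right)$, relation~\eqref{eq:exist:practical-cond:O} (up to relabelling indices) and~\ref{assum:practical-cond:O}\ref{item:barphi:bar:phi} yield, uniformly in $\theta\in\Theta$,
\[
\Xmet\!\left(\f{\chunk{y}{-m}{0}}(x_1),\f{\chunk{y}{-(m+1)}{0}}(x_1)\right)\le\varrho^{\,m+1}\,\bar\phi(y_{-m-1}),\qquad m\ge0 .
\]
Since the $\Yset$-marginal of $\pi^{\thv}$ equals $\pi_1^{\thv}G^{\thv}$, Assumptions~\ref{assum:practical-cond:O}\ref{item:22or23-barphi-V} and~\ref{ass:21-lyapunov} give $\tilde\PE^{\thv}[\lnp\bar\phi(Y_0)]<\infty$ (and $\tilde\PE^{\thv}[\bar\phi(Y_0)]<\infty$ when $C>0$), so $\lnp\bar\phi(Y_{-m})=o(m)$ $\tilde\PP^{\thv}\as$ by Borel--Cantelli and $\sum_m\varrho^{\,m+1}\bar\phi(Y_{-m-1})<\infty$ $\tilde\PP^{\thv}\as$ Completeness of $\Xset$ makes $\{\f{\chunk{Y}{-m}{0}}(x_1)\}_m$ Cauchy, with limit $Z_\infty^\theta\in\Xset_1$ ($\Xset_1$ is closed and contains every iterate, by~\ref{assum:practical-cond:O}\ref{assum:exist:practical-cond:subX}). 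By continuity of $x\mapsto g^\theta(x;y)$ (\ref{assum:continuity:Y:g-theta}), the limit in~\eqref{eq:def-p-theta-neq-thv} exists and $p^\theta(y\,|\,\chunk{Y}{-\infty}{0})=g^\theta(Z_\infty^\theta;y)$ for every $y$; since $g^\theta(Z_\infty^\theta;\cdot)$ is a probability density with respect to $\nu$ this is~\ref{assum:Ptheta:thetastar:is:density}\ref{item:PthetaNEQthetastar:is:density}, and $p^\theta(Y_1\,|\,\chunk{Y}{-\infty}{0})\le\sup_{\Theta\times\Xset_1\times\Yset}g^\theta<\infty$ by~\ref{assum:practical-cond:O}\ref{assum:momentCond}, the first half of~\ref{assum:limit:Y}.

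\emph{Uniform approximation.} By stationarity $p^\theta(Y_k\,|\,\chunk{Y}{-\infty}{k-1})=\lim_m g^\theta(\f{\chunk{Y}{k-1-m}{k-1}}(x_1);Y_k)$, and $g^\theta(\f{\chunk{Y}{1}{k-1}}(x_1);Y_k)$ is its term $m=k-2$. Applying~\ref{assum:practical-cond:O}\ref{item:ineq:loggg} along the telescoping difference (all relevant states lie in $\Xset_1$) together with the contraction estimate shifted in time, the difference of the two logarithms is bounded, uniformly in $\theta$, by $H(\delta_k)\,e^{C\,r_k}\,\bar\phi(Y_k)$, where $\delta_k\le\varrho^{\,k-1}\sum_{j\ge0}\varrho^{\,j}\bar\phi(Y_{-j})$ bounds the total distance travelled and $r_k$ is the supremum over $\theta$ of $\Xmet(x_1,\cdot)$ along the iterates that occur. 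Now $\delta_k\to0$ and $\varrho^{\,k-1}\bar\phi(Y_k)\to0$ $\tilde\PP^{\thv}\as$, so by~\ref{assum:practical-cond:O}\ref{item:H} one has $H(\delta_k)\lesssim\delta_k$ eventually; if $C=0$ the exponential is $1$ and the bound vanishes. If $C>0$, $r_k$ is dominated by $\sup_\theta\Xmet(x_1,\psi^\theta_{Y_{k-1}}(x_1))$ — finite by~\ref{assum:continuity:Y:phi-theta} and compactness of $\Theta$ — plus $\sum_{i\ge1}\varrho^{\,i}\bar\phi(Y_{k-1-i})$, both $o(k)$ $\tilde\PP^{\thv}\as$ by Birkhoff's theorem (this uses $\tilde\PE^{\thv}[\bar\phi(Y_0)]<\infty$, i.e.~\eqref{eq:barphi-strong-V} rather than~\eqref{eq:barphi-V}), so $\varrho^{\,k-1}e^{C\,r_k}\to0$. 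This gives~\eqref{eq:unfi-limit-contrast-od}, hence~\ref{assum:limit:Y}.

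\emph{Identification of the limit.} Under $\PP^{\thv}$, write $X_1=\f{\chunk{Y}{-m+1}{0}}(X_{-m+1})$ with $X_{-m+1}\in\Xset_1$; then~\eqref{eq:exist:practical-cond:O} gives $\Xmet(X_1,\f{\chunk{Y}{-m+1}{0}}(x_1))\le\varrho^{\,m}\bar\psi(X_{-m+1})$. As $\bar\psi(X_{-m+1})$ has the law of the $\PP^{\thv}$-a.s.\ finite variable $\bar\psi(X_1)$, this tends to $0$ in $\PP^{\thv}$-probability, while $\f{\chunk{Y}{-m+1}{0}}(x_1)\to Z_\infty^{\thv}$ $\PP^{\thv}\as$; hence $X_1=Z_\infty^{\thv}$ $\PP^{\thv}\as$ Since $Z_\infty^{\thv}$ is a measurable function of $\chunk{Y}{-\infty}{0}$ and, under $\PP^{\thv}$, the conditional density of $Y_1$ given $\mathcal{F}_0\supseteq\sigma(\chunk{Y}{-\infty}{0})$ is $g^{\thv}(X_1;\cdot)$, conditioning down to $\sigma(\chunk{Y}{-\infty}{0})$ and using that $X_1=Z_\infty^{\thv}$ is $\sigma(\chunk{Y}{-\infty}{0})$-measurable shows that $y\mapsto p^{\thv}(y\,|\,\chunk{Y}{-\infty}{0})=g^{\thv}(Z_\infty^{\thv};y)$ is the conditional density of $Y_1$ given $\chunk{Y}{-\infty}{0}$, which is~\ref{assum:Ptheta:thetastar:is:density}\ref{item:PthetaISthetastar:is:density}. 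I expect the main obstacle to be the uniform-in-$\theta$ bookkeeping in the second stage when $C>0$, where the exponential weight $e^{C\,r_k}$ must be prevented from overwhelming the geometric factor $\varrho^{\,k-1}$ — precisely what forces the stronger moment condition~\eqref{eq:barphi-strong-V}; this bookkeeping is carried out following~\cite{dou:kou:mou:2013}.
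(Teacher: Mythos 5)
Your proposal is correct and follows essentially the same route as the paper's proof: telescoping the contraction~\eqref{eq:exist:practical-cond:O} with increments bounded by $\varrho^{m+1}\bar\phi(Y_{-m-1})$ and the (log-)moment of $\bar\phi$ coming from~\ref{ass:21-lyapunov} and~\ref{assum:practical-cond:O}\ref{item:22or23-barphi-V} to build the limit $\f[\theta]{\chunk{Y}{-\infty}{0}}$ (your $Z_\infty^\theta$), identifying it with $X_1$ under $\PP^{\thv}$ via $\varrho^{m}\bar\psi(X_{-m})\to0$ in probability to get~\ref{assum:Ptheta:thetastar:is:density}\ref{item:PthetaISthetastar:is:density}, and controlling the log-ratio through~\ref{assum:practical-cond:O}\ref{item:ineq:loggg} with the exponential factors handled by the stronger moment~\eqref{eq:barphi-strong-V} when $C>0$. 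The only cosmetic deviations are that you bound $\Xmet(\f{\chunk{Y}{1}{k-1}}(x_1),\f[\theta]{\chunk{Y}{-\infty}{k-1}})$ by summing the telescoping tail directly (the paper instead uses~\ref{assum:practical-cond:O}\ref{item:psibar} and~\ref{assum:continuity:Y:phi-theta} to pass to $\bar\psi$ at the limit point) and that you use a Birkhoff $o(k)$ estimate where the paper uses \autoref{lem:useful}; note only that the $o(k)$ control of $\sup_{\theta}\Xmet(x_1,\psi^\theta_{Y_{k-1}}(x_1))$ really rests on the $n=0$ case of~\eqref{eq:exist:practical-cond:O} combined with~\ref{assum:practical-cond:O}\ref{item:barphi:bar:phi}, giving the bound $\bar\phi(Y_{k-1})$, rather than on continuity and compactness alone.
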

\begin{proof}
See \ref{app:lem}.
\end{proof}
Now the proof of \autoref{thm:convergence-main} directly follows from the following
lemma.
\begin{lemma}\label{lemma:gen:convergence-od}
  Assume that
  \ref{assum:continuity:Y:g-theta},
  \ref{assum:continuity:Y:phi-theta} and  \ref{assum:practical-cond:O}\ref{assum:exist:practical-cond:subX}--\ref{assum:momentCond} hold and that $x_1$
  satisfies~\ref{assum:limit:Y}.  Then $\Theta_\star$ defined
  by~(\ref{eq:def-Theta-star-set}) is a non-empty closed subset of $\Theta$ and
  \eqref{eq:strong-consistency-prelim} holds.
\end{lemma}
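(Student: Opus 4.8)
Here is how I would prove Lemma~\ref{lemma:gen:convergence-od}. The plan is to run the classical Pfanzagl scheme (as in \cite{pfanzagl:1969,dou:kou:mou:2013}): transfer the normalized log-likelihood $\lkdM[x_1,n]{\chunk{Y}{1}{n}}[\theta]$ to the limiting contrast $\theta\mapsto\mathsf{L}^{\theta}\eqdef\tilde\PE^{\thv}[\ln p^{\theta}(Y_1\mid\chunk{Y}{-\infty}{0})]$ via the uniform approximation in~\ref{assum:limit:Y}, and then localize in $\theta$ by a compactness argument. First, by~\ref{assum:practical-cond:O}\ref{assum:exist:practical-cond:subX}--\ref{assum:momentCond}, for $k\ge2$ one has $\f{\chunk{y}{1}{k-1}}(x_1)\in\Xset_1$ and $g^{\theta}$ is bounded on $\Theta\times\Xset_1\times\Yset$ by some $M<\infty$; passing to the limit in~\eqref{eq:def-p-theta-neq-thv} gives $0\le p^{\theta}(y_1\mid\chunk{y}{-\infty}{0})\le M$, hence $\ln p^{\theta}(Y_1\mid\chunk{Y}{-\infty}{0})\le\ln M$. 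Together with the finiteness statement of~\ref{assum:limit:Y}, this makes $\mathsf{L}^{\theta}$ well defined in $[-\infty,\infty)$ for every $\theta$, so $\Theta_\star=\argmax_{\theta}\mathsf{L}^{\theta}$ in~\eqref{eq:def-Theta-star-set} makes sense.

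Next, under~\ref{assum:gen:identif:unique:pi} the process $\{Y_k\}_{k\in\zset}$ is stationary and ergodic under $\tilde\PP^{\thv}$. Fix $\theta$, set $r_k(\theta)\eqdef\ln g^{\theta}(\f{\chunk{Y}{1}{k-1}}(x_1);Y_k)-\ln p^{\theta}(Y_k\mid\chunk{Y}{-\infty}{k-1})$, and write, using~\eqref{eq:defi:lkdM}, $\lkdM[x_1,n]{\chunk{Y}{1}{n}}[\theta]=n^{-1}\sum_{k=1}^{n}\ln p^{\theta}(Y_k\mid\chunk{Y}{-\infty}{k-1})+n^{-1}\sum_{k=1}^{n}r_k(\theta)$. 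By~\eqref{eq:unfi-limit-contrast-od}, $\sup_{\theta\in\Theta}|r_k(\theta)|\to0$ $\tilde\PP^{\thv}\as$, so the Cesàro average of the remainder tends to $0$ uniformly in $\theta$; and the Birkhoff ergodic theorem applied to the stationary sequence $k\mapsto\ln p^{\theta}(Y_k\mid\chunk{Y}{-\infty}{k-1})$ (bounded above by $\ln M$, hence with mean well defined in $[-\infty,\infty)$) shows the first average converges to $\mathsf{L}^{\theta}$ $\tilde\PP^{\thv}\as$ Thus $\lkdM[x_1,n]{\chunk{Y}{1}{n}}[\theta]\to\mathsf{L}^{\theta}$ $\tilde\PP^{\thv}\as$ for each fixed $\theta$. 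Moreover, from \ref{assum:continuity:Y:g-theta}--\ref{assum:continuity:Y:phi-theta} the maps $\theta\mapsto g^{\theta}(\f{\chunk{y}{1}{k-1}}(x_1);y_k)$ are continuous and (by the locally uniform convergence underlying Lemma~\ref{lem:exist:practical-cond:O}) so is $\theta\mapsto p^{\theta}(y_1\mid\chunk{y}{-\infty}{0})$; combined with the uniform bound $\ln M$ and reverse Fatou, $\theta\mapsto\mathsf{L}^{\theta}$ is upper semicontinuous on the compact set $\Theta$. Hence it attains its maximum and $\Theta_\star=\{\mathsf{L}^{\theta}=\mathsf{L}^\star\}$, with $\mathsf{L}^\star\eqdef\sup_{\theta}\mathsf{L}^{\theta}$, is a non-empty closed subset of $\Theta$. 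If $\mathsf{L}^\star=-\infty$ then $\Theta_\star=\Theta$ and~\eqref{eq:strong-consistency-prelim} is trivial, so assume $\mathsf{L}^\star>-\infty$.

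For the localization, set $\Phi_A(\chunk{Y}{-\infty}{1})\eqdef\sup_{\theta\in A}\ln p^{\theta}(Y_1\mid\chunk{Y}{-\infty}{0})$ for open $A\subseteq\Theta$; by separability of $\Theta$ and upper semicontinuity the supremum may be restricted to a countable dense subset of $A$, so $\Phi_A$ is measurable, and $\Phi_A\le\ln M$. As in the previous paragraph,
\[
\sup_{\theta\in A}\lkdM[x_1,n]{\chunk{Y}{1}{n}}[\theta]\;\le\;n^{-1}\sum_{k=1}^{n}\sup_{\theta\in A}\ln p^{\theta}(Y_k\mid\chunk{Y}{-\infty}{k-1})+n^{-1}\sum_{k=1}^{n}\sup_{\theta\in\Theta}|r_k(\theta)|\eqsp,
\]
so the Birkhoff theorem applied to the stationary sequence governed by $\Phi_A$ gives $\limsup_n\sup_{\theta\in A}\lkdM[x_1,n]{\chunk{Y}{1}{n}}[\theta]\le\tilde\PE^{\thv}[\Phi_A]$ $\tilde\PP^{\thv}\as$ Upper semicontinuity also yields $\Phi_{\ball{\theta_0}{\varepsilon}}\downarrow\ln p^{\theta_0}(Y_1\mid\chunk{Y}{-\infty}{0})$ as $\varepsilon\downarrow0$, whence $\tilde\PE^{\thv}[\Phi_{\ball{\theta_0}{\varepsilon}}]\downarrow\mathsf{L}^{\theta_0}$ by monotone convergence from above. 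Now fix an open $G\supseteq\Theta_\star$; then $K\eqdef\Theta\setminus G$ is compact and $\sup_{\theta\in K}\mathsf{L}^{\theta}<\mathsf{L}^\star$ (the sup is attained and cannot equal $\mathsf{L}^\star$ since $K\cap\Theta_\star=\emptyset$). Choose $\eta>0$ with $\sup_{\theta\in K}\mathsf{L}^{\theta}<\mathsf{L}^\star-2\eta$, cover $K$ by finitely many balls $\ball{\theta_i}{\varepsilon_i}$ with $\tilde\PE^{\thv}[\Phi_{\ball{\theta_i}{\varepsilon_i}}]<\mathsf{L}^\star-\eta$, and pick $\tilde\theta$ with $\mathsf{L}^{\tilde\theta}>\mathsf{L}^\star-\eta$. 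Combining the finite-cover bound with $\lkdM[x_1,n]{\chunk{Y}{1}{n}}[\mlY{x_1,n}]\ge\lkdM[x_1,n]{\chunk{Y}{1}{n}}[\tilde\theta]\to\mathsf{L}^{\tilde\theta}>\mathsf{L}^\star-\eta$ shows that, $\tilde\PP^{\thv}\as$, eventually $\lkdM[x_1,n]{\chunk{Y}{1}{n}}[\mlY{x_1,n}]>\sup_{\theta\in K}\lkdM[x_1,n]{\chunk{Y}{1}{n}}[\theta]$, i.e.\ $\mlY{x_1,n}\in G$. Letting $G$ range over a countable neighborhood base of $\Theta_\star$ gives $\met(\mlY{x_1,n},\Theta_\star)\to0$ $\tilde\PP^{\thv}\as$, which is~\eqref{eq:strong-consistency-prelim}.

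The main obstacle is the localization step: ensuring measurability of the localized suprema $\Phi_A$ and the convergence $\tilde\PE^{\thv}[\Phi_{\ball{\theta_0}{\varepsilon}}]\downarrow\mathsf{L}^{\theta_0}$, both of which rest on the continuity (locally uniform in $\theta$) of $\theta\mapsto p^{\theta}(y_1\mid\chunk{y}{-\infty}{0})$ that has to be extracted from the proof of Lemma~\ref{lem:exist:practical-cond:O}; once this regularity is available, the remainder is the standard compactness/covering argument together with two applications of Birkhoff's theorem.
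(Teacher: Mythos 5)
Your argument is correct in substance, but it takes a genuinely more self-contained route than the paper. The paper's own proof is very short: it invokes \cite[Theorem~33]{dou:kou:mou:2013}, which already packages the Pfanzagl/Wald-type machinery you reconstruct by hand (Birkhoff's theorem under the ergodicity coming from~\ref{assum:gen:identif:unique:pi}, localized suprema over shrinking balls, monotone convergence, and a finite covering of the complement of a neighborhood of $\Theta_\star$), and reduces the lemma to checking two conditions: (a) $\tilde\PE^{\thv}\bigl[\sup_{\theta\in\Theta}\lnp p^{\theta}(Y_1|\chunk{Y}{-\infty}{0})\bigr]<\infty$, which follows from the uniform bound in~\ref{assum:practical-cond:O}\ref{assum:momentCond} exactly as in your first paragraph, and (b) $\tilde\PP^{\thv}$\as\ continuity of $\theta\mapsto\ln p^{\theta}(Y_1|\chunk{Y}{-\infty}{0})$ on $\Theta$. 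So you essentially re-prove the cited black box; the paper's route buys brevity, yours buys independence from the external theorem, and the probabilistic ingredients (boundedness from~\ref{assum:practical-cond:O}\ref{assum:exist:practical-cond:subX}--\ref{assum:momentCond}, the uniform remainder control from~\eqref{eq:unfi-limit-contrast-od}) are the same in both.

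The one step you should tighten is the continuity (or upper semicontinuity) of $\theta\mapsto p^{\theta}(Y_1|\chunk{Y}{-\infty}{0})$, on which your localization step (measurability of $\Phi_A$ and $\tilde\PE^{\thv}[\Phi_{\ball{\theta_0}{\varepsilon}}]\downarrow\tilde\PE^{\thv}[\ln p^{\theta_0}(Y_1|\chunk{Y}{-\infty}{0})]$) genuinely relies. You justify it by the ``locally uniform convergence underlying Lemma~\ref{lem:exist:practical-cond:O}'', but the proof of that lemma uses the full Assumption~\ref{assum:practical-cond:O} (in particular items~\ref{assum:exist:practical-cond:O}, \ref{item:psibar}, \ref{item:barphi:bar:phi} and~\ref{item:22or23-barphi-V}), whereas the present lemma only assumes \ref{assum:practical-cond:O}\ref{assum:exist:practical-cond:subX}--\ref{assum:momentCond} together with~\ref{assum:limit:Y}. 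The paper obtains the required continuity from~\ref{assum:limit:Y} alone: the uniform convergence~\eqref{eq:unfi-limit-contrast-od} is stated as $k\to\infty$ along the time index, but since $\tilde\PP^{\thv}$ is shift invariant it implies that $U_m=\sup_{\theta\in\Theta}\bigl|\ln g^{\theta}(\f{\chunk{Y}{-m}{0}}(x_1);Y_k)/p^{\theta}(Y_1|\chunk{Y}{-\infty}{0})\bigr|$ (with $k=1$) tends to $0$ in probability as $m\to\infty$, hence \as\ along a subsequence; along that subsequence one has uniform-in-$\theta$ convergence of the maps $\theta\mapsto\ln g^{\theta}(\f{\chunk{Y}{-m}{0}}(x_1);Y_1)$, each of which is continuous by~\ref{assum:continuity:Y:g-theta}, \ref{assum:continuity:Y:phi-theta} and the positivity of $g^\theta$, so the limit is continuous \as\ Substituting this shift-invariance/subsequence argument for your appeal to Lemma~\ref{lem:exist:practical-cond:O} makes your proof complete within the stated hypotheses; as written, that single step borrows assumptions the lemma does not make.
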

\begin{proof}  By \cite[Theorem~33]{dou:kou:mou:2013}, to obtain~(\ref{eq:strong-consistency-prelim}), it is sufficient
  to show that, for all $\thv\in\Theta$, the two following assertions hold.
\begin{enumerate}[label=(\alph*)]
\item\label{item:finite:moment}
$\tilde\PE^{\thv} \left[\sup_{\theta\in\Theta}\lnp p^{\theta}(Y_1\,|\,\chunk{Y}{-\infty}{0})\right]<\infty$\,,
\item\label{item:con:limit}
the function $\theta\mapsto\ln p^{\theta}(Y_1\,|\,\chunk{Y}{-\infty}{0})$ is continuous on
$\Theta$, $\tilde\PP^{\thv}\as$
\end{enumerate}
In~\ref{assum:limit:Y}, $p^{\theta}(Y_1\,|\,\chunk{Y}{-\infty}{0})$ is
defined $\tilde\PP^{\thv}\as$ as the limit in~(\ref{eq:def-p-theta-neq-thv}) with $x=x_1$. So, $\tilde\PP^{\thv}\as$,
by~\ref{assum:practical-cond:O}\ref{assum:exist:practical-cond:subX}--\ref{assum:momentCond},
$p^{\theta}(Y_1\,|\,\chunk{Y}{-\infty}{0})$ is bounded by
the finite constant appearing in~\ref{assum:practical-cond:O}\ref{assum:momentCond}.
Hence Condition~\ref{item:finite:moment} holds.

Condition \ref{item:con:limit} then follows from~(\ref{eq:unfi-limit-contrast-od}).
Since almost sure convergence implies the convergence in probability and $\tilde\PP^{\thv}$ is shift invariant,
 the random sequence
 \begin{align*}
U_m\eqdef\sup_{\theta \in \Theta} \left|\ln \frac{g^{\theta} (\f{\chunk{Y}{-m}{0}}(x_1);Y_1)}{p^{\theta}(Y_1\,|\,\chunk{Y}{-\infty}{0})}\right|\;,\quad{m\in\zsetp}\;,
\end{align*}
converges to zero in $\tilde\PP^{\thv}$-probability. Then there exists a
subsequence of $(U_m)$ which converges $\tilde\PP^{\thv}\as$ to zero. Hence,
interpreting this convergence as a uniform (in $\theta$) convergence of $\ln
g^{\theta} (\f{\chunk{Y}{-m}{0}}(x_1);Y_1)$ to $\ln
{p^{\theta}(Y_1\,|\,\chunk{Y}{-\infty}{0})}$ to conclude
that~\ref{item:con:limit} holds, it is sufficient to show that
$\theta\mapsto\ln g^{\theta} (\f{\chunk{Y}{-m}{0}}(x_1);Y_1)$ is continuous for
all $m$ $\tilde\PP^{\thv}\as$ This is indeed the case by~\ref{assum:continuity:Y:g-theta}
and~\ref{assum:continuity:Y:phi-theta} and since $g^\theta(x;y)$
is positive.
\end{proof}

\subsection{Ergodicity}
\label{sec:ergodicity-proofs}

For proving \autoref{thm:ergodicity}, we first recall a more general set of conditions derived in
\cite{dou:kou:mou:2013}, which are based on the following definition.

\begin{definition}\label{def:coupling:con}
  Let $\bar G$ be a probability kernel from $\Xset^2$ to
  $\Ysigma^{\otimes 2}\otimes\mathcal{P}(\{0,1\})$ satisfying the following marginal conditions,
  for all $(x,x') \in \Xset^2$ and $B \in \Ysigma$,
\begin{equation}\label{eq:marginalConditionsH}
  \begin{cases}
\bar G((x,x');B \times \Yset\times\{0,1\})=G(x;B)\,, \\
\bar G((x,x');\Yset \times B\times\{0,1\} )=G(x';B) \eqsp,
  \end{cases}
\end{equation}
and such that the following coupling condition holds
\begin{equation}\label{eq:coupling-conditionGbar}
\bar G((x,x');\{(y,y)~:~y\in\Yset\}\times\{1\})=\bar G((x,x');\Yset^2\times\{1\})\;.
\end{equation}
Define the following quantities successively.
\begin{itemize}
\item The trace measure of $\bar G((x,x');\cdot)$ on the set $\{(y,y)~:~y\in\Yset\}\times\{1\}$ is
  denoted by
\begin{equation}\label{def:Gcheck}
\check G((x,x');B)=\bar G((x,x');\{(y,y)~:~y\in B\}\times\{1\}),\quad B\in\Ysigma\;.
\end{equation}
\item The probability kernel $\bar R$ from $(\Xset^2,\Xsigma^{\otimes 2})$ to
  $(\Xset^2\times\{0,1\},\Xsigma^{\otimes2}\otimes\mathcal{P}(\{0,1\})$ is
  defined for all $x,x'\in\Xset^2$ and $A\in \Xsigma^{\otimes2}$ by
\begin{equation}\label{def:Rbar}
\bar R((x,x');A\times\{1\})=\int_{\Yset}
\1_A(\psi_{y}(x),\psi_{y}(x'))\;\check G((x,x');\rmd y)\; .
\end{equation}
 \item The measurable function $\alpha$ from $\Xset^2$ to $[0,1]$ is defined by
 \begin{equation}\label{eq:expressionAlpha}
\alpha(x,x')=\bar R((x,x');\Xset^2 \times \{1\})
=\bar G((x,x');\Yset^2\times\{1\})\eqsp.
\end{equation}
\item The kernel $\hat R$ is defined for all $(x,x') \in \Xset^2$
  and $A \in \Xsigma^{\otimes 2}$ by
\begin{equation} \label{def:Rhat}
\hat R((x,x'); A)=
\begin{cases}
\displaystyle\frac{\bar R((x,x');A \times \{1\})}{\alpha(x,x')}
&\text{ if $\alpha(x,x')>0$,}\\  
0&\text{ otherwise.}
\end{cases}
 \end{equation}
\end{itemize}
\end{definition}
We can now introduce the so-called \emph{contracting condition} which yields ergodicity.
\begin{hyp}{A}
\item \label{assum:asympStrgFeller:general} There exists a kernel $\bar G$
  yielding $\alpha$ and $\hat R$ as in~\autoref{def:coupling:con}, a measurable
  function $W: \Xset^2 \to [1,\infty)$ satisfying
  Conditions~\ref{assum:38--40:alpha-phi}\ref{assum:definition-gamma-x} and~\ref{assum:38--40:alpha-phi}\ref{assum:hyp:beta:gene} and real
  numbers $(D,\zeta_1,\zeta_2,\rho) \in (\rset_+)^3 \times (0,1)$ such that for
  all $(x,x') \in \Xset^2$ and, for all $n\geq1$,
\begin{align}
& {\hat R}^n((x,x'); \Xmet) \leq D \rho^n \Xmet(x,x') \label{eq:majo:d}\;,\\
& {\hat R}^n((x,x'); \Xmet\times W) \leq D \rho^n \Xmet^{\zeta_1}(x,x')W^{\zeta_2}(x,x')\;. \label{eq:majo:d:w}
\end{align}
\end{hyp}
Under Conditions~\ref{ass-ergo-Xset}, \ref{assum:weakFeller-V},
\ref{assum:weakFeller}, \ref{assum:reachable} and
\ref{assum:asympStrgFeller:general} and by combining Theorem~6, Proposition~8
and Lemma~7 in~\cite{dou:kou:mou:2013}, we immediately obtain the following
result.
\begin{theorem} \label{thm:uniqueInvariantProbaMeasure}
  Assume~\ref{ass-ergo-Xset}, \ref{assum:weakFeller-V}, \ref{assum:weakFeller},
  \ref{assum:reachable} and \ref{assum:asympStrgFeller:general}. Then the Markov
  kernel $K$ admits a unique invariant distribution $\pi$ and $\pi_1(\bar
  V)<\infty$ for any $\bar V:\Xset\to\rsetp$ such that $\bar V\lesssim V$.
\end{theorem}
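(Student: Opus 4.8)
The plan is to deduce this statement from the ergodicity results of~\cite{dou:kou:mou:2013}, applied for a single fixed parameter $\theta$; since every hypothesis in the statement is of the ``for all $\theta$'' type, this is enough. The first observation is that, although the results of~\cite{dou:kou:mou:2013} are written for observation-driven models in which the emission density does not depend on the unknown parameter, that restriction is irrelevant once $\theta$ is fixed: the bivariate chain $\{(X_k,Y_k)\}$ governed by the kernel $K$ of the form~\eqref{eq:def:observation-driven} is exactly the type of Markov chain their theory treats, and the one-step reindexing of $\{Y_k\}$ between their setting and ours (recalled in \autoref{sec:definitions-notation-od}) does not affect a time-homogeneous statement about the invariant law. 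So I would drop the superscript $\theta$ and work with $K$, $R$ and $\psi$ as in \autoref{sec:ergodicity}.

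Then I would combine the three quoted results from~\cite{dou:kou:mou:2013}. Their Theorem~6 gives that $K$ admits a unique invariant distribution $\pi$ as soon as: $\Xset$ is locally compact, complete and separable and carries its Borel $\sigma$-field (that is, \ref{ass-ergo-Xset}); the marginal kernel $R$ of~\eqref{eq:Rtheta-def} is weak Feller (\ref{assum:weakFeller}) and has a reachable point (\ref{assum:reachable}); $R$ obeys a drift condition $RV\le\lambda V+\beta$ whose sublevel sets $\{V\le M\}$ are compact (\ref{assum:weakFeller-V}); and a coupling/contracting condition holds. Their Proposition~8 produces that coupling condition from the quantitative contraction estimates~\eqref{eq:majo:d}--\eqref{eq:majo:d:w} on the iterates of $\hat R$, which are precisely \ref{assum:asympStrgFeller:general}. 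This gives the first assertion. For the second, their Lemma~7 turns the drift inequality \ref{assum:weakFeller-V} into $\pi_1(V)<\infty$, where $\pi_1$ is the $\Xset$-marginal of $\pi$; then for any $\bar V:\Xset\to\rsetp$ with $\bar V\lesssim V$, monotonicity of the integral yields $\pi_1(\bar V)\le c\,\pi_1(V)<\infty$.

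I do not expect a real obstacle here, since no new argument is produced: the content is imported from~\cite{dou:kou:mou:2013}. The main care required is bookkeeping — matching our assumption list item by item with the hypotheses of Theorem~6, Proposition~8 and Lemma~7, in particular checking, after unfolding \autoref{def:coupling:con}, that the contracting condition used in~\cite{dou:kou:mou:2013} is literally \ref{assum:asympStrgFeller:general} — together with noting that $V$, $\lambda$, $\beta$ and the constants $D,\rho,\zeta_1,\zeta_2$ appearing in~\eqref{eq:majo:d}--\eqref{eq:majo:d:w} may all depend on $\theta$, which is harmless since the conclusion is also per-$\theta$. The genuinely substantive work, namely \emph{deriving} the contraction bounds~\eqref{eq:majo:d}--\eqref{eq:majo:d:w} from the easier hypotheses \ref{assum:bound:rho} and \ref{assum:38--40:alpha-phi} — which requires constructing the coupling kernel $\bar G$, reading off $\alpha$ and $\hat R$ as in \autoref{def:coupling:con}, and combining the strict contraction modulus of $\psi$ with the function $W$ — is not needed for this theorem; it is exactly what the proof of \autoref{thm:ergodicity} will have to carry out.
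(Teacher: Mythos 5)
Your proposal matches the paper's own treatment: the paper proves this theorem simply by invoking the combination of Theorem~6, Proposition~8 and Lemma~7 of \cite{dou:kou:mou:2013} for the fixed kernel $K$ (with the superscript $\theta$ dropped), exactly as you do, and the passage from $\pi_1(V)<\infty$ to $\pi_1(\bar V)<\infty$ for $\bar V\lesssim V$ is the same trivial domination argument. Your closing remark is also accurate: the real work of deriving~\eqref{eq:majo:d}--\eqref{eq:majo:d:w} from \ref{assum:bound:rho} and~\ref{assum:38--40:alpha-phi} belongs to the proof of \autoref{thm:ergodicity}, not here.
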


Assumptions ~\ref{ass-ergo-Xset}, \ref{assum:weakFeller-V},
\ref{assum:weakFeller} and \ref{assum:reachable} are quite usual and easy to
check.  The key point to obtain ergodicity is thus to construct $\bar G$
satisfying~\ref{assum:asympStrgFeller:general}. For this, we can also rely on
the following result which is quoted from \cite[Lemma~9]{dou:kou:mou:2013}.
\begin{lemma} \label{lem:practical:conditions}
Assume that there exists $(\rho, \beta) \in (0,1) \times \rset$ such that for all $(x,x')\in \Xset^2$,
\begin{align}
& \hat R\left((x,x'); \left\{(x_1,x_1')\in\Xset^2~:~\Xmet(x_1,x_1')>\rho\; \Xmet(x,x')\right\}\right)=0\, , \label{eq:contrac}\\
& \hat RW \leq W +\beta \;. \label{eq:driftCond:W}
\end{align}
Then, \eqref{eq:majo:d} and \eqref{eq:majo:d:w} hold.
\end{lemma}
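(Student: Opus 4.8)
The plan is to obtain \eqref{eq:majo:d} and \eqref{eq:majo:d:w} from two elementary inductions on $n$, working throughout with the kernel $\hat R$ acting on nonnegative functions; this is the right level of generality because, by~\autoref{def:coupling:con}, $\hat R$ is only sub-Markov (it is the null measure wherever $\alpha$ vanishes, with $\hat R 1\le1$ in general), so one should never divide by its total mass. Without loss of generality I would assume $\beta\ge0$ in \eqref{eq:driftCond:W}, since replacing $\beta$ by $\beta\vee0$ only weakens the drift inequality. A one-line induction, $\hat R^{n+1}W=\hat R(\hat R^nW)\le\hat R(W+n\beta)\le W+(n+1)\beta$, then gives
$$
\hat R^nW(x,x')\le W(x,x')+n\beta\;,\qquad n\ge1,\ (x,x')\in\Xset^2\;.
$$

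Next I would rewrite \eqref{eq:contrac} as the pointwise statement that $\Xmet(x_1,x_1')\le\rho\,\Xmet(x,x')$ holds for $\hat R((x,x');\cdot)$-almost every $(x_1,x_1')$, which immediately yields, for every nonnegative measurable $f:\Xset^2\to\rsetp$,
$$
\hat R(\Xmet\times f)(x,x')\le\rho\,\Xmet(x,x')\,\hat Rf(x,x')\;.
$$
Iterating this (at step $n+1$, first invoke the inductive hypothesis to bound $\hat R^n(\Xmet\times W)$ by $\rho^n\,(\Xmet\times\hat R^nW)$, then apply the displayed inequality with $f=\hat R^nW$) produces the key estimate
$$
\hat R^n(\Xmet\times W)(x,x')\le\rho^n\,\Xmet(x,x')\,\hat R^nW(x,x')\;,\qquad n\ge1\;.
$$
Specialising to the constant function $W\equiv1$ and using $\hat R^n1\le1$, this already gives \eqref{eq:majo:d} with $D=1$ and the same $\rho$.

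It remains to combine the two displays: since $W\ge1$,
$$
\hat R^n(\Xmet\times W)(x,x')\le\rho^n\bigl(W(x,x')+n\beta\bigr)\Xmet(x,x')\le(1+n\beta)\,\rho^n\,\Xmet(x,x')\,W(x,x')\;.
$$
Choosing any $\bar\rho\in(\rho,1)$ and setting $D:=1\vee\sup_{n\ge1}(1+n\beta)(\rho/\bar\rho)^n<\infty$, we get \eqref{eq:majo:d:w} with this $\bar\rho$, this $D$ and $\zeta_1=\zeta_2=1$, while \eqref{eq:majo:d} persists with the larger rate $\bar\rho$ and the same $D$; hence a single quadruple $(D,\zeta_1,\zeta_2,\bar\rho)$ satisfies~\ref{assum:asympStrgFeller:general}. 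I expect the only step with genuine content to be the induction giving $\hat R^n(\Xmet\times W)\le\rho^n\,\Xmet\cdot\hat R^nW$: one must peel off the geometric contraction factor one application of $\hat R$ at a time, so that the $W$-part accumulates nothing worse than the additive drift $n\beta$ from \eqref{eq:driftCond:W}, and that polynomial loss is then absorbed harmlessly into an arbitrarily small enlargement of the contraction rate. Everything else is routine bookkeeping, the sub-Markov nature of $\hat R$ causing no obstacle as long as one argues only with kernels acting on nonnegative functions.
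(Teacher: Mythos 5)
Your proof is correct. Note that the paper itself does not prove this lemma: it is quoted verbatim from \cite[Lemma~9]{dou:kou:mou:2013}, so there is no in-paper argument to compare yours against; what you have written is a valid self-contained justification. The three ingredients you use are exactly the natural ones: (i) the reduction of \eqref{eq:contrac} to the pointwise statement that $\Xmet(x_1,x_1')\le\rho\,\Xmet(x,x')$ for $\hat R((x,x');\cdot)$-almost every $(x_1,x_1')$, which gives $\hat R(\Xmet\times f)\le\rho\,\Xmet\,\hat Rf$ for every nonnegative measurable $f$ and hence, by induction, $\hat R^n(\Xmet\times W)\le\rho^n\,\Xmet\,\hat R^nW$ (and \eqref{eq:majo:d} with $D=1$ on taking $W\equiv1$, since $\hat R^n\mathbf 1\le1$); (ii) the drift iteration $\hat R^nW\le W+n\beta$, where your reduction to $\beta\ge0$ and your care with the sub-Markov normalisation ($\hat R$ being the null measure where $\alpha$ vanishes, so all bounds hold trivially there) are both needed and correctly handled; and (iii) the absorption of the polynomial factor $1+n\beta$ into a slightly larger rate $\bar\rho\in(\rho,1)$, using $W\ge1$, which yields \eqref{eq:majo:d:w} with $\zeta_1=\zeta_2=1$ and a single quadruple $(D,\zeta_1,\zeta_2,\bar\rho)$ serving both bounds, as~\ref{assum:asympStrgFeller:general} requires. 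I see no gap.
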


Now we can prove that our set of conditions is sufficient.
\begin{proof}[Proof of \autoref{thm:ergodicity}]
We only need to show that  \ref{assum:bound:rho}
  and~\ref{assum:38--40:alpha-phi} imply ~\ref{assum:asympStrgFeller:general}. We preface our proof by the following lemma.

\begin{lemma}\label{lem:coupling:con}
Assume \ref{assum:38--40:alpha-phi}\ref{item:alpha:phi:3}.
Then one can define a kernel $\bar G$ as in~\autoref{def:coupling:con} with
the same $\alpha$ given in~\eqref{eq:expressionAlpha}.
Moreover, the kernel $\hat R$ defined by~(\ref{def:Rhat}) satisfies, for all $(x,x')\in\Xset^2$ such that
$\alpha(x,x')>0$ and all measurable functions $f:\Xset^2\to\rsetp$,
\begin{equation}\label{eq:rho-hatRef}
\hat R((x,x');f)= G(\phi(x,x');\tilde{f})\quad\text{with}\quad
\tilde{f}(y)=f(\psi_{y}(x),\psi_{y}(x'))\;.
\end{equation}
\end{lemma}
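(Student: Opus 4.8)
The plan is to build $\bar G$ by hand through a maximal-coupling construction and then to read off $\hat R$ directly from Definition~\ref{def:coupling:con}. First, set
\begin{equation*}
\check G((x,x');\rmd y):=\alpha(x,x')\,g(\phi(x,x');y)\,\nu(\rmd y)\;.
\end{equation*}
By~\eqref{eq:con:min-g-phi} we have $\alpha(x,x')\,g(\phi(x,x');y)\le g(x;y)\wedge g(x';y)$, so $\check G((x,x');\cdot)$ is a nonnegative measure dominated both by $G(x;\cdot)$ and by $G(x';\cdot)$, with total mass $\alpha(x,x')$ (since $g(\phi(x,x');\cdot)$ is a $\nu$-density). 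Hence the residuals $G(x;\cdot)-\check G((x,x');\cdot)$ and $G(x';\cdot)-\check G((x,x');\cdot)$ are nonnegative measures of total mass $1-\alpha(x,x')$. I then let $\bar G((x,x');\cdot)$ be the law of a triple $(Y,Y',B)\in\Yset^2\times\{0,1\}$ specified by: on $\{B=1\}$, of probability $\alpha(x,x')$, one has $Y=Y'$ with common law $\check G((x,x');\cdot)/\alpha(x,x')$; on $\{B=0\}$, of probability $1-\alpha(x,x')$, one draws $Y$ and $Y'$ independently from the normalised residuals. The degenerate values $\alpha(x,x')\in\{0,1\}$ are handled by the evident conventions (no coupling, resp.\ full coupling). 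Measurability of $\alpha$, $\phi$ and $(x,y)\mapsto g(x;y)$ then makes $(x,x')\mapsto\bar G((x,x');\cdot)$ a probability kernel from $\Xset^2$ to $\Ysigma^{\otimes2}\otimes\mathcal{P}(\{0,1\})$.

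Next I verify that this $\bar G$ meets the three requirements of Definition~\ref{def:coupling:con}. For $B\in\Ysigma$, the first-coordinate marginal is $\check G((x,x');B)+\bigl(G(x;B)-\check G((x,x');B)\bigr)=G(x;B)$, and likewise the second-coordinate marginal is $G(x';\cdot)$, which gives~\eqref{eq:marginalConditionsH}. Since $Y=Y'$ holds on $\{B=1\}$ by construction, $\bar G((x,x');\{(y,y):y\in\Yset\}\times\{1\})=\bar G((x,x');\Yset^2\times\{1\})=\alpha(x,x')$, which is the coupling condition~\eqref{eq:coupling-conditionGbar} and, via~\eqref{eq:expressionAlpha}, identifies the $\alpha$ associated with $\bar G$ as the given one; moreover the trace measure~\eqref{def:Gcheck} of $\bar G((x,x');\cdot)$ is exactly the $\check G$ above.

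Finally I compute $\hat R$. Substituting $\check G((x,x');\rmd y)=\alpha(x,x')\,G(\phi(x,x');\rmd y)$ into~\eqref{def:Rbar} gives, for any $A\in\Xsigma^{\otimes2}$,
\begin{equation*}
\bar R((x,x');A\times\{1\})=\alpha(x,x')\int_\Yset\1_A(\psi_y(x),\psi_y(x'))\,G(\phi(x,x');\rmd y)\;,
\end{equation*}
so that by~\eqref{def:Rhat}, whenever $\alpha(x,x')>0$,
\begin{equation*}
\hat R((x,x');A)=\int_\Yset\1_A(\psi_y(x),\psi_y(x'))\,G(\phi(x,x');\rmd y)\;.
\end{equation*}
Extending from indicators to arbitrary nonnegative measurable $f:\Xset^2\to\rsetp$ by linearity and monotone convergence yields $\hat R((x,x');f)=\int_\Yset f(\psi_y(x),\psi_y(x'))\,G(\phi(x,x');\rmd y)=G(\phi(x,x');\tilde f)$ with $\tilde f(y)=f(\psi_y(x),\psi_y(x'))$, which is~\eqref{eq:rho-hatRef}.

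The only genuinely delicate point is the joint measurability in $(x,x')$ of the coupling kernel $\bar G$ — in particular that the normalisations by $\alpha(x,x')$ and by $1-\alpha(x,x')$, together with the residual densities, assemble into a measurable family of probability measures on $\Yset^2\times\{0,1\}$. This follows from the measurability of $\alpha$, $\phi$ and $(x,y)\mapsto g(x;y)$ but deserves to be spelled out; everything else is routine bookkeeping of the maximal-coupling construction.
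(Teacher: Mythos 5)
Your proposal is correct and follows essentially the same route as the paper: both build $\bar G$ as the maximal-type coupling whose common component is $\alpha(x,x')\,g(\phi(x,x');\cdot)\,\nu(\rmd y)$ (guaranteed nonnegative residuals by~\eqref{eq:con:min-g-phi}), with independent draws from the normalised residuals on the complementary event, and then identify $\hat R((x,x');\cdot)$ with $G(\phi(x,x');\cdot)$ pushed through $y\mapsto(\psi_y(x),\psi_y(x'))$. The only difference is presentational: you compute $\hat R$ by substituting the trace measure into~\eqref{def:Rbar}--\eqref{def:Rhat}, whereas the paper phrases the same computation as a conditional expectation given $U=1$.
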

Let us conclude the proof of  \autoref{thm:ergodicity} before proving this
lemma. By \autoref{lem:coupling:con} and  \autoref{lem:practical:conditions}, it
remains to check that \eqref{eq:contrac} and \eqref{eq:driftCond:W} hold for all $(x,x')\in\Xset^2$.
 Observe that by definition of $\hat R$,
Condition~\ref{assum:38--40:alpha-phi}\ref{assum:driftCond:W:new} is equivalent to
$$
\sup_{(x,x')\in\Xset^2}\left(\hat RW(x,x')- W(x,x')\right) <\infty \;.
$$
so we can find $\beta\in\rset$ such
that~\eqref{eq:driftCond:W}  holds for all $(x,x')\in\Xset^2$.

Now, let $(x,x')\in\Xset^2$ and let $(X,X')$ be distributed according to $\hat{R}((x,x');\cdot)$ which is defined in~\eqref{eq:rho-hatRef}.
When $x=x'$, then $\Xmet(X,X')=0$, implying that
Condition~\eqref{eq:contrac} holds with any nonnegative $\rho$. For $x\neq x'$, let $\rho$ be defined by
\begin{equation}\label{eq:rho}
\rho=\sup_{\substack{(x,x',y)\in\Xset^2\times\Yset\\x\neq x'}}\frac{\Xmet(\psi_y(x),\psi_y(x'))}{\Xmet(x,x')}\;,
\end{equation}
which is in $(0,1)$ by~\ref{assum:bound:rho}. Then
\begin{equation*}
\frac{\Xmet(X,X')}{\Xmet(x,x')}=\frac{\Xmet(\psi_Y(x),\psi_Y(x'))}{\Xmet(x,x')}\le\rho\;.
\end{equation*}
Therefore, Condition~\eqref{eq:contrac} holds  for all $(x,x')\in\Xset^2$ with $\rho$ as
in~\eqref{eq:rho}.
\end{proof}
We conclude this section with the postponed
\begin{proof}[Proof of~\autoref{lem:coupling:con}]
Let $(x, x')\in\Xset^2$. We define $\bar G((x,x');\cdot)$ as the distribution
of $(Y, Y',U)$ drawn as follows.  We first draw a random variable $\bar Y$
taking values in $\Yset$ with density $g(\phi(x,x');\cdot)$ with respect to
$\nu$. Then we define $(Y,Y',U)$ by separating the two cases, 
$\alpha(x,x')=1$ and $\alpha(x,x')<1$.
  \begin{enumerate}[label=$\bullet$]
  \item Suppose that $\alpha(x,x')=1$. Then
    from~\ref{assum:38--40:alpha-phi}\ref{item:alpha:phi:3}, we have
 $$
 G(x;\cdot)=G(x';\cdot)=G(\phi(x,x');\cdot)\eqsp.
 $$
 In this case, we set $(Y,Y',U)=(\bar{Y},\bar Y,1)$.
\item Suppose now that $\alpha(x,x')<1$. Then,
 using~(\ref{eq:con:min-g-phi}), the functions
$$
(1-\alpha({x,x'}))^{-1}\left[g(x;\cdot)-\alpha({x,x'})g(\phi(x,x');\cdot)\right]
$$
and
$$(1-\alpha({x,x'}))^{-1}\left[g(x';\cdot)-\alpha({x,x'})g(\phi(x,x');\cdot)\right]\eqsp,$$
are probability density functions with respect to $\nu$ and we let
$\Lambda$ and $\Lambda'$ be two independent random variables taking values
 in $\Yset$ drawn with these two density functions,
respectively. In this case we draw $U$ independently according to a Bernoulli variable with
mean $\alpha(x, x')$ and set
\[
(Y,Y')=
\begin{cases} (\bar Y, \bar Y)  \quad &\text{if $U=1$} \eqsp,\\
(\Lambda, \Lambda') \quad &\text{if $U=0$} \eqsp.
\end{cases}
\]
  \end{enumerate}
One can easily check that the so defined kernel $\bar G$ satisfies ~\eqref{eq:marginalConditionsH} and~\eqref{eq:coupling-conditionGbar}. Moreover, for all $(x,x')\in\Xset^2$,
$$
\bar G((x,x');\Yset^2\times\{1\})=\PP(U=1)=
  \alpha(x,x')\;,
$$
which
is compatible with~(\ref{eq:expressionAlpha}).
The kernel $\hat R$ is defined by setting $\hat R((x,x');\cdot)$ as the
conditional distribution of $(X,X')=(\psi_Y(x),\psi_Y(x'))$ given that
$U=1$.
To complete the proof of~\autoref{lem:coupling:con}, observe that for any
measurable $f:\Xset^2\to\rsetp$, we have, for all $(x,x')\in\Xset^2$ such that $\alpha(x,x')>0$,
\begin{align*}
\hat R((x,x');f)&=\PE\left[f(\psi_Y(x),\psi_{Y}(x'))\mid U=1\right] \\
            &=\PE\left[f(\psi_{\bar Y}(x),\psi_{\bar Y}(x'))\right]\\
            &= G(\phi(x,x');\tilde{f})\;,
\end{align*}
where $\tilde{f}(y)= f(\psi_y(x),\psi_y(x'))$ for all $y\in\Yset$.

\end{proof}

\subsection{Proof of~\autoref{lem:exist:practical-cond:O}}\label{app:lem}
Under~\ref{ass:21-lyapunov},
Assumptions~\ref{assum:practical-cond:O}\ref{item:22or23-barphi-V} implies that for all $\theta\in\Theta$,
\begin{equation}
\label{eq:bar:phi:mc}
\pi^\theta_2\left(\lnp (\bar{\phi})\right)<\infty\;,
\end{equation}
and if moreover $C>0$,
\begin{equation}
\label{eq:bar:phi:strong:mc}  \pi^\theta_2\left(\bar{\phi}\right)<\infty \;.
\end{equation}
For proving~\autoref{lem:exist:practical-cond:O}, we will also make use of
\cite[Lemma~34]{dou:kou:mou:2013} which we quote here for convenience.
\begin{lemma} \label{lem:useful}
Let \sequence{U}[n][\zsetp] be a stationary sequence of real-valued random variables on $(\Omega, \mcf, \PP)$. Assume that $\PE{(\lnp |U_0|)}<\infty$. Then, for all $\eta \in (0,1)$,
$$
\lim_{k \to \infty} \eta^k U_k =0\, , \quad \PP\as
$$
\end{lemma}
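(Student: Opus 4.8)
The plan is to prove the almost-sure convergence by a Borel--Cantelli argument that converts the logarithmic moment condition into summability of tail probabilities. Since $\eta^k U_k\to0$ if and only if $\eta^k|U_k|\to0$, and since $\{\eta^k|U_k|\to0\}^c=\bigcup_{m\geq1}\{\limsup_k \eta^k|U_k|>1/m\}$ is a countable union of events, it suffices to show that for each fixed $\epsilon>0$ the event $A_k\eqdef\{\eta^k|U_k|>\epsilon\}$ occurs only finitely often, $\PP\as$ By the first Borel--Cantelli lemma this follows as soon as $\sum_{k\geq0}\PP(A_k)<\infty$.

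First I would use stationarity to freeze the index. For every $k$,
$$
\PP(A_k)=\PP\left(|U_k|>\epsilon\,\eta^{-k}\right)=\PP\left(|U_0|>\epsilon\,\eta^{-k}\right)\eqsp,
$$
the last equality because $(U_n)$ is stationary and $\{|x|>c\}$ is a fixed measurable event. Writing $\delta\eqdef-\ln\eta>0$, so that $\eta^{-k}=\rme^{\delta k}$, I would then compare with the logarithmic moment: for all $k$ large enough that $\epsilon\,\rme^{\delta k}\geq1$, the inequality $|U_0|>\epsilon\,\rme^{\delta k}$ forces $|U_0|>1$, whence $\lnp|U_0|=\ln|U_0|>\ln\epsilon+\delta k$. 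Consequently, for such $k$,
$$
\PP(A_k)\leq\PP\left(\lnp|U_0|>\delta k+\ln\epsilon\right)\eqsp.
$$

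It then remains to show $\sum_k\PP(\lnp|U_0|>\delta k+\ln\epsilon)<\infty$, and this is exactly where the hypothesis $\PE[\lnp|U_0|]<\infty$ enters. Setting $Z\eqdef\lnp|U_0|\geq0$ and letting $k_0$ be large enough that $\delta k+\ln\epsilon\geq0$ for all $k\geq k_0$, I would use that $s\mapsto\PP(Z>s)$ is nonincreasing together with the layer-cake identity $\PE[Z]=\int_0^\infty\PP(Z>s)\,\rmd s$ to compare the arithmetic-progression tail sum with an integral:
$$
\sum_{k\geq k_0}\PP(Z>\delta k+\ln\epsilon)\leq\frac{1}{\delta}\int_0^\infty\PP(Z>s)\,\rmd s=\frac{1}{\delta}\,\PE[Z]<\infty\eqsp.
$$
The finitely many terms with $k<k_0$ are each bounded by $1$ and contribute a finite amount. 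Combining, $\sum_k\PP(A_k)<\infty$, and the first Borel--Cantelli lemma concludes the proof.

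The computation is elementary and I do not expect a genuine obstacle; the only points requiring care are the threshold bookkeeping and the integral comparison. Specifically, one must justify the passage from $|U_0|>\epsilon\,\rme^{\delta k}$ to $\lnp|U_0|>\delta k+\ln\epsilon$, which needs $k$ large enough both to absorb a possibly negative $\ln\epsilon$ and to enter the regime where $\lnp$ coincides with $\ln$, and one must confirm that the discarded initial terms form a finite sum. The monotone integral comparison is the standard device translating an $L^1$ logarithmic moment into summability along a geometric sampling of the tail; this is the heart of why the \emph{logarithmic} moment (rather than a stronger one) is exactly what is needed to damp $U_k$ against the geometric factor $\eta^k$.
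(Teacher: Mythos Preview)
Your Borel--Cantelli argument is correct and self-contained. Note that the paper does not actually prove this lemma: it is quoted verbatim from \cite[Lemma~34]{dou:kou:mou:2013} for convenience, with no proof supplied. Your proof therefore adds value relative to the paper itself. The reduction to $\sum_k\PP(|U_0|>\epsilon\,\eta^{-k})<\infty$ via stationarity, followed by the logarithmic change of variables and the integral comparison $\sum_k\PP(Z>\delta k+c)\leq\delta^{-1}\PE[Z]+O(1)$, is exactly the standard route and all steps are justified; the only cosmetic point is that the integral bound remains finite even if $\delta(k_0-1)+\ln\epsilon<0$, since $\PP(Z>s)\leq1$ on any finite negative interval, so the choice of $k_0$ is not delicate.
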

\begin{proof}[Proof of~\autoref{lem:exist:practical-cond:O}]
  We first show that $p^{\theta}(y|\chunk{Y}{-\infty}{0})$
  in~(\ref{eq:def-p-theta-neq-thv}) is finite for $x=x_1$ $\tilde \PP^{\thv}\as$\ By~\ref{assum:continuity:Y:g-theta}, this follows
by writing
\begin{equation}\label{eq:p-theta-thv:def}
p^{\theta}\left(y_1\,|\,\chunk{y}{-\infty}{0}\right) =
g^\theta\left(\f[\theta]{\chunk{y}{-\infty}{0}};y_1\right)\eqsp,
\end{equation}
if, for all $\theta,\thv\in\Theta$, the limit
\begin{equation}
  \label{eq::exist:x0:O}
\f[\theta]{\chunk{Y}{-\infty}{0}}=\lim_{m \to \infty}\f{\chunk{Y}{-m}{0}}(x_1)
\quad\text{ is well defined}\quad \tilde \PP^{\thv}\as
\end{equation}
For all $\theta\in\Theta$, $m\geq 0$,
  $x\in\Xset$ and $\chunk{y}{-m}{0}\in\Yset^{m+1}$,
  using~\ref{assum:practical-cond:O}\ref{assum:exist:practical-cond:O}, we have
\begin{equation} \label{eq:diff:x:y}
\Xmet(\f[\theta]{\chunk{y}{-m}{0}}(x_1),\f[\theta]{\chunk{y}{-m}{0}}(x)) \leq \varrho^{m+1} \; \bar\psi(x)\eqsp.
\end{equation}
Taking $x=\psi^\theta_{y_{-m-1}}(x_1)$ and
using~\ref{assum:practical-cond:O}\ref{item:barphi:bar:phi}, we obtain, for
all  $\chunk{y}{-m-1}{0}\in\Yset^{m+2}$,
\begin{equation*}
\Xmet(\f[\theta]{\chunk{y}{-m}{0}}(x_1),\f[\theta]{\chunk{y}{-m-1}{0}}(x_1)) \leq \varrho^{m+1} \; \bar\phi\left(y_{-m-1}\right)\eqsp.
\end{equation*}
Using~(\ref{eq:bar:phi:mc}) and
\autoref{lem:useful}, we have that
\begin{equation}
  \label{eq:phibar-infty}
\forall \eta\in(0,1),\quad\sum_{k\in\zset}\eta^{|k|}  \bar\phi\left(Y_{k}\right) < \infty\eqsp,\quad\tilde\PP^{\thv}\as\eqsp,
\end{equation}
and thus $\left(\f[\theta]{\chunk{Y}{-m}{0}}(x_1)\right)_{m\geq0}$ is a Cauchy
sequence $\tilde\PP^{\thv}\as$ Its limit exists $\tilde\PP^{\thv}\as$, since
$(\Xset,d)$ is assumed to be complete, which defines the $\Xset$-valued
random variable $\f[\theta]{\chunk{Y}{-\infty}{0}}$ for all
$\theta,\thv\in\Theta$ when $Y$ has distribution  $\tilde\PP^{\thv}\as$ Thus~(\ref{eq::exist:x0:O}) holds and we further obtain that
\begin{align}\nonumber
 \sup_{\theta\in\Theta}
 \Xmet(\f[\theta]{\chunk{Y}{-k}{0}}(x_1),x_1)
&\leq  \sup_{\theta\in\Theta}\sum_{m=0}^{k}
 \Xmet(\f[\theta]{\chunk{Y}{-m}{0}}(x_1),\f[\theta]{\chunk{Y}{-m+1}{0}}(x_1))\\
  \label{eq:limit-bound-cauchy-beforelimit}
&\leq\sum_{m\ge0}\varrho^{m} \;
 \bar\phi\left(Y_{-m}\right)<\infty\eqsp, \quad\tilde\PP^{\thv}\as
\end{align}
so that, letting $k\to\infty$,
\begin{align}
  \label{eq:limit-bound-cauchy}
 \sup_{\theta\in\Theta}
 \Xmet(\f[\theta]{\chunk{Y}{-\infty}{0}},x_1)
\leq\sum_{m\ge0}\varrho^{m} \;
 \bar\phi\left(Y_{-m}\right)<\infty\eqsp, \quad\tilde\PP^{\thv}\as
\end{align}
Let us now
prove~\ref{assum:Ptheta:thetastar:is:density}. Relation~(\ref{eq:p-theta-thv:def})
directly
yields~\ref{assum:Ptheta:thetastar:is:density}\ref{item:PthetaNEQthetastar:is:density}.
Let us
prove~\ref{assum:Ptheta:thetastar:is:density}\ref{item:PthetaISthetastar:is:density},
hence consider the case $\theta=\thv$. Using~\eqref{eq:diff:x:y}, we have
\begin{equation*}
\Xmet(\f[\thv]{\chunk{Y}{-m}{0}}(x_1),\f[\thv]{\chunk{Y}{-m}{0}}(X_{-m})) \leq \varrho^{m+1} \; \bar\psi(X_{-m})\quad\PP^{\thv}\as
\end{equation*}
Since $\{\bar\psi(X_{-m})\}_{m\geq0}$ is stationary under $\PP^\thv$,
it is bounded in probability, and since $\varrho<1$, for all $\epsilon>0$, we have
\begin{equation}
  \label{eq:proba-limit}
 \lim_{m\to\infty}\PP^{\thv}\left(\Xmet\left(\f[\thv]{\chunk{Y}{-m}{0}}(X_{-m}),\f[\thv]{\chunk{Y}{-m}{0}}(x)\right)>\epsilon\right)=0\eqsp.
\end{equation}
Note that for all $m\geq1$, $\f[\thv]{\chunk{Y}{-m}{0}}(X_{-m})=X_1$ $\PP^{\thv}\as$, hence
we get that
\begin{equation}
  \label{eq:eq-ftheta'theta'X1}
\f[\thv]{\chunk{Y}{-\infty}{0}}= X_1 \quad  \PP^{\thv}\as
\end{equation}
To complete the proof of~\ref{assum:Ptheta:thetastar:is:density}\ref{item:PthetaISthetastar:is:density}, we need to show that, under $\tilde\PP^{\thv}$, $y\mapsto g^\thv
(\f[\thv]{\chunk{Y}{-\infty}{0}};y)=g^\thv
(X_1;y)$ is the conditional density of
$Y_1$ given $\chunk{Y}{-\infty}{0}$, that is, for any $B\in\Ysigma$,
$$
\int\1_{B}(y)g^{\thv}\left(X_1;y\right)\nu(\rmd
y)=\PP^{\thv}\left(Y_1\in B\,|\,\chunk{Y}{-\infty}{0}\right) \;.
$$
Now, note that, by defintion of $\PP^{\thv}$,
$$
\int\1_{B}(y)g^\thv\left(X_1;y\right)\nu(\rmd y) = \PP^{\thv}
\left(Y_1\in B\mid X_1\right) =\PP^{\thv} \left(Y_1\in B\mid X_1,\chunk{Y}{-\infty}{0}\right)\;.
$$
But since~(\ref{eq:eq-ftheta'theta'X1}) implies that $X_1$ is
$\sigma(\chunk{Y}{-\infty}{0})$-measurable, $X_1$ can be removed in the last
conditioning, which concludes the proof~\ref{assum:Ptheta:thetastar:is:density}\ref{item:PthetaISthetastar:is:density}.

Finally, it remains to show the uniform
convergence~(\ref{eq:unfi-limit-contrast-od}) in~\ref{assum:limit:Y}.
By~\ref{assum:continuity:Y:phi-theta} and~(\ref{eq::exist:x0:O}), we have, for
all $\theta,\thv\in\Theta$, $k\in\zsetp$,
\begin{equation}\label{eq:com:psi:inf}
\f[\theta]{\chunk{Y}{-\infty}{k-1}}=\f[\theta]{\chunk{Y}{1}{k-1}}\left(\f[\theta]{\chunk{Y}{-\infty}{0}}\right)\eqsp,\quad\tilde\PP^{\thv}\as
\end{equation}
From~\ref{assum:practical-cond:O}\ref{assum:exist:practical-cond:O} and
\eqref{eq:com:psi:inf}, we get
$$
\Xmet(\f[\theta]{\chunk{Y}{1}{k-1}}(x_1),\f[\theta]{\chunk{Y}{-\infty}{k-1}})
\le\varrho^{k-1} \bar\psi\left(\f[\theta]{\chunk{Y}{-\infty}{0}}\right)\eqsp,\quad\tilde\PP^{\thv} \as
$$
On the other hand~\ref{assum:practical-cond:O}\ref{item:psibar}
and~(\ref{eq:limit-bound-cauchy}) imply
\begin{equation}\label{eq:finite:sup}
\sup_{\theta\in\Theta}\bar\psi\left(\f[\theta]{\chunk{Y}{-\infty}{0}}\right)<\infty\eqsp,\quad\tilde\PP^{\thv}\as\eqsp,
\end{equation}
which, with the previous display, yields,
\begin{equation}\label{eq:A:sup:theta}
\sup_{\theta\in\Theta}
\Xmet(\f[\theta]{\chunk{Y}{1}{k-1}}(x_1),\f[\theta]{\chunk{Y}{-\infty}{k-1}})
=O_{k\to\infty}\left(\varrho^{k}\right)\quad\tilde\PP^{\thv}\as
\end{equation}
Since $\Xset_1$ is closed and satisfies
Condition~\ref{assum:practical-cond:O}\ref{assum:exist:practical-cond:subX},
we have that, $\f{\chunk{Y}{1}{k-1}}(x_1)$ and
$\f[\theta]{\chunk{Y}{-\infty}{k-1}}$ are in $\Xset_1$ for all $k\geq2$. Thus
Condition~\ref{assum:practical-cond:O}\ref{item:ineq:loggg} gives that
$$
\sup_{\theta \in \Theta} \left|\ln \frac{g^\theta
    (\f{\chunk{Y}{1}{k-1}}(x_1);Y_k)}{g^\theta
    (\f[\theta]{\chunk{Y}{-\infty}{k-1}};Y_k)}\right|\leq A_k(1)\times
A_k(2)\times A_k(3)\times A_k(4)\quad \tilde\PP^{\thv}\as\;,
$$
where
\begin{align*}
A_k(1)&=\sup_{\theta \in \Theta}
H\left(\Xmet(\f{\chunk{Y}{1}{k-1}}(x_1),\f[\theta]{\chunk{Y}{-\infty}{k-1}})\right)\\
A_k(2)&=\sup_{\theta \in \Theta}
\rme^{C\,\Xmet\left(x_1,\f[\theta]{\chunk{Y}{-\infty}{k-1}}\right)}\\
A_k(3)&=\sup_{\theta \in \Theta}
\rme^{C\,\Xmet\left(x_1,\f{\chunk{Y}{1}{k-1}}(x_1)\right)}\\
A_k(4)&=\bar\phi(Y_k) \;.
\end{align*}
By~\eqref{eq:A:sup:theta} and~\ref{assum:practical-cond:O}\ref{item:H}, we
have
$$
A_k(1)=O_{k\to\infty}\left(\varrho^{k}\right)\quad\tilde\PP^{\thv}\as
$$
With~(\ref{eq:phibar-infty}), this yields~(\ref{eq:unfi-limit-contrast-od}) in the case where $C=0$.
For $C>0$, we further observe that, by~\eqref{eq:limit-bound-cauchy} and~(\ref{eq:bar:phi:strong:mc}), we have, for all $\thv\in\Theta$ and $k\in\zsetp$,
\begin{align*}
\tilde\PE^{\thv}\left[\lnp A_k(2) \right]
\le\tilde\PE^{\thv}\left[C\sum_{m\ge0}^\infty\varrho^m\bar\phi\left(Y_{-m+k-1}\right)\right]=\frac{C\pi_2^{\thv}\left(\bar\phi\right)}{1-\varrho}<\infty\eqsp.
\end{align*}
Then~\autoref{lem:useful} implies that, $\tilde\PP^{\thv}\as$,
$A_k(2)=O(\eta^{-k})$ for any $\eta\in(0,1)$. The same property applies
similarly to $A_k(3)$ by using~(\ref{eq:limit-bound-cauchy-beforelimit}) in
place of~(\ref{eq:limit-bound-cauchy}). This yields~(\ref{eq:unfi-limit-contrast-od}) in
the case where $C>0$, which concludes the proof.
\end{proof}


\section*{Acknowledgement}
We are thankful to the editor-in-charge and anonymous referee for the insightful comments and the helpful suggestions that lead to improve this paper.

\bibliography{monybibs_Hal}

\end{document}